\documentclass[11pt]{article}
\usepackage{amsmath,amssymb,amsthm,amscd,esint}
\usepackage{url,endnotes,hyperref}

\numberwithin{equation}{section}

\newtheorem{theorem}{Theorem}[section]
\newtheorem{lemma}[theorem]{Lemma}
\newtheorem{definition}[theorem]{Definition}

\theoremstyle{corollary}
\newtheorem{corollary}[theorem]{Corollary}
\theoremstyle{conjecture}
\newtheorem{conjecture}{Conjecture}
\newtheorem{question}[theorem]{Question}

\newtheorem*{claim}{Claim}
\theoremstyle{assumption}

\theoremstyle{proposition}
\newtheorem{proposition}[theorem]{Proposition}
\theoremstyle{remark}
\newtheorem{remark}[theorem]{Remark}
\numberwithin{equation}{section}
\everymath{\displaystyle}

\newcommand{\noop}[1]{}
\newcommand{\CC}{\mathbb{C}}

\newcommand{\RR}{\mathbb{R}}
\newcommand{\ZZ}{\mathbb{Z}}
\newcommand{\TT}{T}

\newcommand{\rec}{\operatorname{rec}}
\newcommand{\Spec}{\operatorname{Spec}}
\newcommand{\Hom}{\operatorname{Hom}}
\newcommand{\Ric}{\operatorname{Ric}}

\DeclareMathOperator{\Hess}{Hess}

\DeclareMathOperator{\Id}{Id}

\begin{document}

\title{\texorpdfstring{\makebox[\textwidth][c]{%
Compactness and Rigidity of Complete K\"ahler Ricci Shrinkers}}%
{Compactness and Rigidity of Complete Kähler-Ricci Shrinkers}}
\author{}
\date{}
\maketitle

% 缩小标题与作者间距 \vspace{-1.2em}，名字放大\Large
\vspace{-3em}
\begin{center}
\Large
\begin{tabular}{cc}
Tongxin Xu$^*$ & \qquad Zhenlei Zhang$^\dagger$
\end{tabular}
\end{center}
% 缩小作者和摘要/正文距离
\vspace{0.5em}

\begin{abstract}
In this paper, we study compactness, rigidity, and related geometric properties of complete K\"ahler--Ricci shrinkers through the polarized Fano fibration structure.
\end{abstract}
\maketitle
\tableofcontents

\section{Introduction}
In geometric flows, one of the most important questions is understanding the formation of singularity. The Ricci flow was introduced by Hamilton as an evolution equation for Riemannian metrics \cite{hamilton1982three}. In his study of self-similar solutions, Hamilton introduced Ricci solitons\cite{MR954419,MR1375255}, which evolve under the Ricci flow only by diffeomorphisms and scaling. Recall that a Ricci shrinker $(X,g,f)$ is a complete Riemannian manifold $(X,g)$ together with a smooth function $f$ satisfying
\begin{align*}
    \operatorname{Ric}+\nabla^2 f=\lambda g, \qquad \lambda>0.
\end{align*}

Ricci shrinkers play a fundamental role in singularity analysis of Ricci flow. In particular, suitable parabolic blow-ups at Type I singular points subconverge to nontrivial gradient shrinking Ricci solitons\cite{naber2007noncompact,MR2886712}. Thus, understanding complete Ricci shrinkers is an essential step toward understanding finite-time singularities of the Ricci flow.

Complete Ricci shrinkers are fully understood in real dimensions two and three (cf. \cite{MR954419,Ivey1993CompactThreeManifolds,MR1375255,Perelman2003RicciFlowSurgery, NiWallach2008Classification,CaoChenZhu2008RecentDevelopments,chen2009strong}, etc.). The complete lists of $\mathbb{R}^2,S^2,$ $\mathbb{R}^3,S^3,S^2\times\mathbb{R}$, as well as their corresponding quoients. In real dimension four, however, a complete classification remains out of reach. The geometry is substantially richer: there exist non-Einstein compact shrinkers, such as the Koiso--Cao solitons, as well as noncompact examples including the Feldman--Ilmanen--Knopf shrinker and the more recently constructed BCCD shrinker \cite{koiso1990rotationally,cao1996existence,feldman2003rotationally,bamler2024new}. Nevertheless, substantial progress has been made on four-dimensional shrinkers (cf.\cite{naber2007noncompact,CaoChen2013BachFlat,haslhofer2015note,munteanu2015geometry,kotschwar2015rigidity,LiNiWang2018PIC,MunteanuWang2019Infinity},etc.) For comprehensive treatments of these and related topics, we refer the reader to the monographs \cite{chow2023ricci,chow2025ricci}.

A particularly important class in real even dimensions is formed by Kähler Ricci shrinkers. A Ricci shrinker $(X,g,f)$ is called a Kähler Ricci shrinker if it admits a Kähler structure $J$ such that $\nabla f$ is a holomorphic vector field. In this case, one can rewrite the shrinker equation as
\begin{align*}
    \operatorname{Ric(\omega)}+\sqrt{-1}\partial\bar{\partial} f= \lambda \omega, \qquad \lambda>0,
\end{align*}
where $\omega$ is the Kähler form induced by $g$ and $J$, and $\operatorname{Ric}(\omega)$ its Ricci form. The additional complex structure brings methods from complex and algebraic geometry into the study of Ricci-flow singularities.

Building on the foundational work by Munteanu‑Wang \cite{MunteanuWang2019Infinity} and Cifarelli \cite{cifarelli2022uniqueness}. Complete Kähler Ricci shrinker surfaces have recently been fully classified (cf. \cite{conlon2024classification,cifarelli2024finite,bamler2024new,li2026kahler}). The noncompact models include the Gaussian shrinker over $\mathbb{C}^2$, the standard product shrinker over $\mathbb P^1\times\mathbb C$, the FIK shrinker over $\operatorname{Tot}\mathcal O_{\mathbb P^{1}}(-1)$, and the BCCD shrinker over $\operatorname{Bl}_{p}(\mathbb P^1\times\mathbb C)$. More recently, Conlon, Hallgren, and Ma \cite{conlon2025non} gave an alternative proof of the curvature boundedness theorem of  Li and Wang. They also proved that every volume-noncollapsed finite-time singularity of the Kähler Ricci flow on a compact Kähler surface is of Type I. Combining this characterization with \cite{cifarelli2024finite}, one concludes that every volume-noncollapsed finite-time singularity of the Kähler Ricci flow on a compact Kähler surface is modeled on the FIK shrinker.

The preceding classification results reveal the geometry of complete
K\"ahler--Ricci shrinkers in complex dimension two. In higher
dimensions, Sun--Zhang \cite{sun2024k} established a new
algebro-geometric framework by proving that every smooth complete
K\"ahler--Ricci shrinker is a quasi-projective variety admitting a
canonical polarized Fano fibration. This perspective has been extended to singular shrinkers. Hallgren
\cite{hallgren2023k} proved that tangent cones of singular shrinkers
arising from noncollapsed K\"ahler--Ricci flows are normal affine
algebraic varieties, and Hallgren--Zhang \cite{hallgren2026singular}
showed that the singular shrinkers themselves carry locally algebraic
structures with log terminal singularities. Further developments by Li--Zhang \cite{LiZhang26} related the
polarized Fano fibration to the asymptotic geometry of shrinkers,
showing that the fibration determines the local cone structure at
infinity and characterizing asymptotic conicality. On the rigidity
side, Conlon--Deruelle \cite{ConlonDeruelle26} proved that shrinkers on
resolutions of K\"ahler cones are asymptotically conical and, combined
with Esparza's uniqueness theorem \cite{esparza2025uniqueness},
obtained uniqueness of such shrinkers up to biholomorphism.

In this paper, we study smooth complete K\"ahler--Ricci shrinkers.
By the work of Sun--Zhang \cite{sun2024k}, every such shrinker admits
a canonical surjective projective morphism
\[
    \pi:X\longrightarrow Y=\operatorname{Spec}R_X
\]
onto a polarized affine cone. The morphism is equivariant with
respect to the compact torus \(T\) obtained as the closure of the
one-parameter group generated by \(J\nabla f\). The induced \(T\)-action
gives a weight decomposition
\[
    R_X=\bigoplus_{\beta\in\operatorname{Lie}(T)^*}R_{X,\beta}.
\]
For a homogeneous function \(\phi\in R_{X,\beta}\), we have
\[
    \nabla f(\phi)=\alpha_\beta\phi,
    \qquad
    -\Delta_f\phi=\alpha_\beta\phi,
\]
where $\alpha_\beta=\langle\beta,\xi\rangle>0$, and \(\xi\in\operatorname{Lie}(T)\) corresponds to \(J\nabla f\).
We denote by $E_1=\{\phi\in R_X:\nabla f(\phi)=\phi\}$ the weight-one subspace, which plays a fundamental role in the
splitting analysis. To extract geometric information from homogeneous regular functions,
we introduce the following notion.
\begin{definition}[First-order visibility]
A K\"ahler--Ricci shrinker \((X,g,J,f)\) is called
first-order visible if there exist a point \(q\in X\) and a
nonconstant regular function \(\phi\in R_X\) such that
\[
    \nabla f(q)=0,\qquad d\phi(q)\neq0 .
\]
\end{definition}

First-order visibility provides a simple mechanism for extracting local
geometric information from the global algebraic structure of a
K\"ahler--Ricci shrinker. Indeed, after decomposing \(\phi\) into
homogeneous components, one obtains a homogeneous regular function
whose differential is nonzero at a fixed point of the soliton torus.
Its weight determines an eigenvalue of the drifted Laplacian, while
linearizing the soliton vector field at the fixed point relates the
same weight to an eigenvalue of the Ricci tensor. Thus first-order
visibility gives a direct bridge between the polarized Fano fibration,
weighted spectral theory, and local Ricci geometry. This viewpoint
underlies the rigidity, splitting, and curvature-signature results
developed below.

The spectral theory of the drifted Laplacian on complete Ricci
shrinkers is well developed. Under a Bakry--\'Emery Ricci curvature
lower bound, the weighted Lichnerowicz estimate gives a sharp lower
bound for the first nonzero eigenvalue. Cheng--Zhou
\cite{cheng2017eigenvalues} characterized the equality case by a
Gaussian splitting $\mathbb R^k\times N$, where \(k\) is determined by the multiplicity of the first eigenvalue.
In the K\"ahler setting, He--Ou \cite{he2024dimension} further studied
the relation between polynomial-growth holomorphic functions and the
drifted Laplacian spectrum.

Our starting point is to apply this weighted spectral theory to the algebraic weight decomposition arising from the polarized Fano fibration. A useful analytic input is the automatic weighted \(L^2\)-integrability of homogeneous functions. This allows the sharp weighted Lichnerowicz estimate and its equality case to be applied directly to the coordinate ring. We record the resulting K\"ahler formulation below and include a self-contained proof for later use.

\begin{theorem}[Spectral Gap and Splitting]
Let $(X^n,g,J,f)$ be a complete K\"ahler-Ricci shrinker.
Then for every nonconstant homogeneous regualr function  $\phi\in R_{X,\beta}$, we have $\alpha_{\beta} \ge 1$. More precisely:
\begin{enumerate}
\item If $\operatorname{Ric} > 0$ everywhere on $X$, then $\alpha_{\beta} > 1$.
\item If $\alpha_{\beta} = 1$, then $X$ is holomorphically isometric to $\mathbb{C} \times N$, where $N$ itself is a complete K\"ahler-Ricci shrinker.
\item If $\dim_{\mathbb{C}}E_1=k$, then $X$ is holomorphically isometric to $\mathbb{C}^k \times N$, where $N$ itself is a complete K\"ahler-Ricci shrinker.
\end{enumerate}
\end{theorem}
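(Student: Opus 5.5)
The plan is to run the weighted Bochner argument directly on a homogeneous regular function. Normalize $\lambda=1$, so $\operatorname{Ric}+\nabla^2 f=g$ and $\operatorname{Ric}_f:=\operatorname{Ric}+\nabla^2 f = g$. Take $\phi\in R_{X,\beta}$ nonconstant. Then $\phi$ is holomorphic with $\nabla f(\phi)=\alpha_\beta\phi$ and $-\Delta_f\phi=\alpha_\beta\phi$, and we may work with $u=\operatorname{Re}\phi$ (and $\operatorname{Im}\phi$), which are real eigenfunctions of $-\Delta_f$ with eigenvalue $\alpha_\beta$.

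The first step is analytic. I would show that $\phi$, $|\nabla\phi|$ and $|\nabla^2\phi|$ lie in $L^2(e^{-f}dV)$. The weight relation gives the growth control: along the flow of $\nabla f/|\nabla f|^2$ one has $\frac{d}{dt}\log|\phi|\le \alpha_\beta/|\nabla f|^2$-type bounds. Combined with $|\nabla f|^2\approx f\approx r^2/4$, this yields $|\phi|\le C(1+r)^{2\alpha_\beta}$ away from a compact set. Polynomial growth, together with the Cao–Zhou volume growth and $e^{-f}\sim e^{-r^2/4}$, gives weighted integrability. Gradient and Hessian bounds then follow from Cauchy–Schwarz cutoff arguments, for example weighted Caccioppoli applied to $\Delta_f|\phi|^2=2|\nabla\phi|^2-2\alpha_\beta|\phi|^2$.

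Next I integrate the weighted Bochner formula
\[
\tfrac12\Delta_f|\nabla u|^2=|\nabla^2u|^2+\langle\nabla u,\nabla\Delta_f u\rangle+\operatorname{Ric}_f(\nabla u,\nabla u)
\]
against $e^{-f}$, using cutoffs that the integrability justifies. This gives
\[
0=\int|\nabla^2u|^2e^{-f}+(1-\alpha_\beta)\int|\nabla u|^2e^{-f}.
\]
So $\alpha_\beta\ge1$, with equality iff $\nabla^2u\equiv0$.

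For (1), use $\operatorname{Ric}_f=g$ and instead write $\operatorname{Ric}_f(\nabla u,\nabla u)=|\nabla u|^2$. Alternatively, and more directly, apply the Hessian identity $\nabla^2 u=0$ in the equality case. Then $\nabla u$ is parallel, so $\operatorname{Ric}(\nabla u,\cdot)=0$, which contradicts $\operatorname{Ric}>0$ since $\nabla u\ne0$. Hence $\alpha_\beta>1$.

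For (2), in the equality case $\nabla u$ is a nonzero parallel vector field. Since $J$ is parallel, $J\nabla u$ is parallel too, and these two fields are independent. The de Rham decomposition, on the universal cover first, gives a splitting $\mathbb C\times N$. The level sets of $\phi$ need to be shown connected; simple connectedness of shrinkers in the relevant sense is not needed, because $\phi:X\to\mathbb C$ is a Riemannian submersion with parallel gradient, so $X\cong\mathbb C\times\phi^{-1}(0)$ isometrically and holomorphically. I would then show $f=|\phi|^2/4+f_N$-type decomposition. From $\nabla^2 f= g-\operatorname{Ric}$ and $\operatorname{Ric}$ vanishing on the $\mathbb C$ factor, $f$ restricted to the flat directions has Hessian $g$. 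Hence $f=\tfrac14|z|^2+h(y)$ up to translation in $z$, and $h$ makes $N$ a Kähler–Ricci shrinker.

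For (3), apply (2) to a basis of $E_1$. The parallel gradients of $\operatorname{Re}\phi_i$ for $\phi_i\in E_1$ span a $J$-invariant parallel distribution of complex rank $k$. Linear independence of the complex parallel fields follows because a parallel linear dependence would make some nonzero combination of the $\phi_i$ have zero gradient, hence be constant, hence have weight $0$. The same submersion argument with $\Phi=(\phi_1,\dots,\phi_k):X\to\mathbb C^k$ gives $X\cong\mathbb C^k\times N$. The main obstacle I expect is the justification step: the growth estimate for homogeneous functions and the integration by parts with cutoffs. The rest is a standard consequence of Bochner and de Rham, and one can alternatively cite Cheng–Zhou for the equality case once integrability is established.
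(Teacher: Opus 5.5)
\textbf{Gap in the integrability step.} Most of your plan is the paper's own: weighted $L^2$ integrability, then the cut-off integration of the weighted Bochner formula giving $\int|\nabla^2u|^2e^{-f}=(\alpha_\beta-1)\int|\nabla u|^2e^{-f}$, then the parallel-gradient contradiction for (1), then a splitting for (2)--(3). The gap is the one step the paper does not carry out itself (it cites Li--Zhang's Lemma~4.1, stated as Lemma~\ref{3.1}): your pointwise growth bound for $\phi$.

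Along an integral curve of $\nabla f/|\nabla f|^2$, $f$ increases at unit rate and $\frac{d}{dt}\log|\phi|=\alpha_\beta/|\nabla f|^2$ exactly. So a bound $|\phi|\le Cf^{N}$ requires $|\nabla f|^2\ge cf$ outside a compact set. In your normalization $|\nabla f|^2=2f-R$, so this is the curvature estimate $R\le(2-c)f$ near infinity. A priori you only know $0\le R\le 2f$, and the theorem is meant to hold with no curvature hypothesis. The Cao--Zhou asymptotics control $f$, not $|\nabla f|$ from below. So ``$|\nabla f|^2\approx f$'' is an unproved assumption, and every later integration by parts rests on the integrability it was meant to give.

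\textbf{Repair.} Argue with integrals instead, exactly as in Cao--Zhou's volume estimate, using only $R\ge0$. Set $m=\dim_{\mathbb R}X$, $I(s)=\int_{\{f<s\}}|\phi|^2\,dV$ and $J(s)=\int_{\{f<s\}}R|\phi|^2\,dV$. Apply the divergence theorem to $|\phi|^2\nabla f$ on $\{f<s\}$, using $\nabla f(|\phi|^2)=2\alpha_\beta|\phi|^2$, $\Delta f=m-R$, and $|\nabla f|^2=2s-R$ on $\{f=s\}$, together with the coarea formula. For regular values $s$ this gives
\[
2sI'-J'=(2\alpha_\beta+m)I-J,\qquad 0\le J\le(2\alpha_\beta+m)I .
\]
With $\gamma=\alpha_\beta+m/2$ this reads $(s^{-\gamma}I)'=\tfrac12 s^{-\gamma-1}(J'-J)$. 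Integrate, and integrate the $J'$-term by parts. The bulk term carries the factor $(\gamma+1)/t-1\le0$ for large $t$. The boundary term satisfies $\tfrac12 s^{-\gamma-1}J(s)\le(\gamma/s)\,s^{-\gamma}I(s)$ and is absorbed. Hence $I(s)\le Cs^{\gamma}$, so $\phi\in L^2(e^{-f}dV)$, and from there your argument runs as in Proposition~\ref{3.2}.

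\textbf{Splitting step.} This is a genuinely different and slightly more economical route than the paper's. The paper invokes simple connectedness of K\"ahler--Ricci shrinkers (Sun--Zhang, Esparza) to apply de Rham on $X$. You instead use that $\phi$ (resp.\ $(\phi_1,\dots,\phi_k)$, after a constant linear change) is a Riemannian submersion onto $\mathbb C$ (resp.\ $\mathbb C^k$). Its horizontal fields are complete commuting parallel fields, which split $X$ globally with no topological input. Your argument that a complex dependence among the $d\phi_i$ forces a constant, hence zero, element of $E_1$ is also correct. One slip: with $\operatorname{Ric}+\nabla^2f=g$ the flat factor carries $f=\tfrac12|z|^2+f_N$, not $\tfrac14|z|^2$.
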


We next combine this spectral information with the first-order geometry of the polarized Fano fibration to derive several rigidity results.

The first geometric application is the rigidity and splitting of Kähler Ricci shrinker. A basic rigidity problem asks to what extent positivity of the Ricci tensor constrains the global geometry of a complete Ricci shrinker. In particular, the following question was recently emphasized in \cite{chow2025ricci}.

\begin{question}
Must every complete Ricci shrinker satisfying \(\operatorname{Ric}>0\) be compact?
\end{question}

This question has been answered affirmatively under several stronger curvature assumptions. In the Riemannian setting, Perelman showed that every three-dimensional noncollapsed shrinker with positive bounded sectional curvature is compact \cite{Perelman2003RicciFlowSurgery}. In arbitrary dimensions, Munteanu and Wang \cite{munteanu2017positively} proved that every complete Ricci shrinker with nonnegative sectional curvature and positive Ricci curvature is compact. Li and Ni \cite{LiNi2020Orthogonal} subsequently established compactness under the weaker assumption of weakly PIC$_1$ curvature together with positive Ricci curvature. More recently, Wu and Wu \cite{wu2023shrinkers} obtained compactness under positive second-Ricci curvature, as well as under several related curvature-pinching conditions. Further related results can be found in \cite{NiWallach2008Classification,PetersenWylie2010Classification,naber2007noncompact,CaoChen2013BachFlat,LiNiWang2018PIC,QuWu2024Compact}, among others.

In the K\"ahler setting, Ni \cite{ni2005ancient} proved compactness under nonnegative holomorphic bisectional curvature and positive Ricci curvature; an alternative proof was later given by Wu and Zhang \cite{wu2016remarks}. Zhang \cite{zhang2019gradient}, and independently Li and Ni \cite{LiNi2020Orthogonal}, extended this result to nonnegative orthogonal bisectional curvature. In complex dimension two, Munteanu and Wang \cite{MunteanuWang2019Infinity} proved compactness assuming positive Ricci curvature and bounded curvature. Since Li and Wang \cite{li2026kahler} subsequently showed that every K\"ahler--Ricci shrinker surface has bounded sectional curvature, it follows that positive Ricci curvature alone implies compactness in complex dimension two.

In this paper, we give a new spectral--algebraic proof of these
compactness and rigidity phenomena for K\"ahler--Ricci shrinker
surfaces, and extend the same mechanism to the toric setting.

\begin{theorem}\label{main}
Let \((X^2,g,J,f)\) be a complete K\"ahler--Ricci shrinker surface.
\begin{enumerate}
    \item If \(\operatorname{Ric}>0\), then \(X\) is compact, hence a
    Fano surface;
    \item If \(X\) is noncompact and \(\operatorname{Ric}\geq 0\), then
    \(X\) is holomorphically isometric to either
    \(\mathbb C^2\) or \(\mathbb P^1\times\mathbb C\), equipped with
    the Gaussian or product shrinker, respectively.
\end{enumerate}
\end{theorem}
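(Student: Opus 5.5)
The plan is to use the Sun--Zhang polarized Fano fibration $\pi:X\to Y=\operatorname{Spec}R_X$ together with the Spectral Gap and Splitting theorem, and to show that in the noncompact case the shrinker is first-order visible with weight one.

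\textbf{Step 1: reduction to visibility.} If $X$ is noncompact, then $Y$ has positive dimension. The vertex $o\in Y$ is the unique point fixed by the torus $T$. The fiber $\pi^{-1}(o)$ is compact and $T$-invariant. Since $f$ is proper and attains its minimum, the zero set of $\nabla f$ is nonempty. I would also check that every zero of $\nabla f$ lies in $\pi^{-1}(o)$: if $\nabla f(q)=0$, then $\pi(q)$ is $T$-fixed, so $\pi(q)=o$. Pick such a $q$ and linearize at $q$. Then $\nabla f$ acts on $T_qX$ by $A=\nabla^2 f(q)=\lambda\,\mathrm{Id}-\operatorname{Ric}(q)$, normalized so that $\lambda=1$ to match $\alpha_\beta\ge1$. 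This is a complex-linear operator with real eigenvalues $1-\mu_i$, where the $\mu_i$ are the Ricci eigenvalues at $q$. For any homogeneous $\phi\in R_{X,\beta}$ with $d\phi(q)\neq0$, differentiating $\nabla f(\phi)=\alpha_\beta\phi$ at $q$ gives $d\phi(q)\circ A=\alpha_\beta\, d\phi(q)$. Hence $\alpha_\beta=1-\mu_i$ for some $i$.

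\textbf{Step 2: producing visibility (the main obstacle).} I must show that some regular function has $d\phi(q)\ne0$ at a zero $q$ of $\nabla f$. Suppose instead that every $\phi\in R_X$ has vanishing differential along $\pi^{-1}(o)\cap\operatorname{Zero}(\nabla f)$. The approach is a dimension count in complex dimension two.

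If $\dim Y=2$, then $\pi$ is birational and its exceptional set is contained in $\pi^{-1}(o)$, which is a point or a union of curves. If $\pi^{-1}(o)$ is a point, then $X\cong Y$ near $o$ and $Y$ is smooth there. The maximal ideal then gives coordinates, so visibility holds. If $\pi^{-1}(o)$ is a curve $C$, the anticanonical polarization makes $-K_X$ $\pi$-ample, so $K_X\cdot C<0$. This forces $C$ to be a $(-1)$-curve, i.e. a $\mathbb P^1$ with $C^2=-1$. In that case $X$ is the blow-up of a smooth point, and the pullbacks of local coordinates on $Y$ are visible at fixed points on $C$ away from the strict-transform tangency. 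Finding a torus-fixed point with that property is delicate, so I would use the weights of the $T$-action on $C$ to locate one.

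If $\dim Y=1$, then $Y=\mathbb C$ and $\pi$ is a $\mathbb P^1$-fibration with fiber $F$ over $o$. Here the coordinate $z$ on $Y$ has $dz\ne0$ at fixed points where the central fiber is reduced. I would argue that $F$ is reduced using the $\pi$-ampleness of $-K_X$ and adjunction.

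\textbf{Step 3: conclusion.} Take $\phi$ visible at $q$. If $\operatorname{Ric}>0$, then $\mu_i>0$, so $\alpha_\beta=1-\mu_i<1$. This contradicts the spectral gap $\alpha_\beta\ge1$, so $X$ must be compact, hence Fano. If $\operatorname{Ric}\ge0$, then $\alpha_\beta\le1$, which forces $\alpha_\beta=1$. Part (2) of the Spectral Gap and Splitting theorem then gives $X\cong\mathbb C\times N$, where $N$ is a complete K\"ahler--Ricci shrinker curve. By the two-dimensional classification, $N$ is $\mathbb C$ with the Gaussian metric or $\mathbb P^1$ with the round metric; quotients are excluded since $N$ is simply connected as a Fano fibration. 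This yields $\mathbb C^2$ or $\mathbb P^1\times\mathbb C$ with the stated metrics.

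The hard step is Step 2: guaranteeing that a torus-fixed point exists at which some homogeneous regular function has nonzero differential. I would first try the surface case analysis above via $-K_X$-negativity of exceptional curves.
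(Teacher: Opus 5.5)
Your overall architecture matches the paper's. Step~1 is Proposition~\ref{3.4}, Step~3 is the proof of Theorem~\ref{4.4} from Theorem~\ref{3.3}, and Step~2 is the content of Theorem~\ref{4.2}, which you only sketch.

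As written, Step~2 fails in the case $\dim Y=1$: reducedness of $F_o$ does not give $d(\pi^*t)(q)\neq0$ at every fixed point $q\in F_o$. Using $-K_X\cdot F_o=-K_X\cdot F=2$ and adjunction, one gets either $F_o=C$ or $F_o=C_1+C_2$; the case $2C$ is excluded by parity. In the second case, which is exactly the BCCD configuration, $C_1$ and $C_2$ are $(-1)$-curves meeting transversally in one point. This node is $T$-fixed, and there $\pi^*t=h\,uv$ with $h$ a unit, so $d(\pi^*t)$ vanishes. The zero of $\nabla f$ you pick in Step~1 could be precisely this node. The missing input is that $T$ is connected, so each $C_i\cong\mathbb P^1$ is $T$-invariant and carries at least two $T$-fixed points. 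Hence some fixed point lies on $C_i\setminus(C_1\cap C_2)$, where $F_o$ is a smooth reduced divisor and $d(\pi^*t)\neq0$.

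In the case $\dim Y=2$ you tacitly treat $\pi^{-1}(o)$ as a single curve. After showing each component is a $(-1)$-curve, you must exclude $r\ge2$ components. Connectedness of the fiber gives $\sum_{i<j}C_i\cdot C_j\ge r-1$, hence $(\sum_iC_i)^2\ge r-2\ge0$, contradicting negative definiteness. Smoothness of $o$ then follows from Castelnuovo's contraction together with normality of $Y$; you assert the blow-up structure without this justification.

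The step you call delicate is not: choose the function after the point. Fix a $T$-fixed $q\in C$; by your own Step~1, a minimum point of $f$ already lies in $\pi^{-1}(o)=C$. Let $\ell_q\subset T_oY$ be the corresponding line. Since $\mathfrak m_o$ is a graded ideal, homogeneous elements span $\mathfrak m_o/\mathfrak m_o^2=T_o^*Y$, so some homogeneous $\psi$ satisfies $d\psi(o)|_{\ell_q}\neq0$. In a chart $\pi(u,v)=(u,uv)$ with $q=(0,0)$, one has $\pi^*\psi=au+buv+O(u^2)$ with $a\neq0$, so $d(\pi^*\psi)(q)=a\,du\neq0$.

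With these additions your argument coincides with the paper's.
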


Although complete K\"ahler--Ricci shrinker surfaces have recently been
classified, the classification alone does not reveal the local Ricci
geometry of the resulting models. Our approach is independent of the
surface classification and uses neither curvature bounds,
noncollapsing assumptions, nor asymptotic analysis. It also yields
local curvature information that is not apparent from the explicit
construction of the known examples.

In the toric setting, first-order visibility is automatic for
noncompact shrinkers. Indeed, an unbounded edge of the moment
polyhedron produces a global homogeneous toric character whose
differential is nonzero at the fixed point corresponding to one of its
vertices. Thus the combinatorics of the moment polyhedron supplies the
required first-order information. Combined with the weighted spectral
gap, this gives the following.

\begin{theorem}
Let \((X^n,g,J,f)\) be a complete toric K\"ahler--Ricci shrinker.
\begin{enumerate}
    \item If \(\operatorname{Ric}>0\), then \(X\) is compact, hence a
    toric Fano manifold;
    \item If \(X\) is noncompact and \(\operatorname{Ric}\geq 0\), then
    \(X\) is \(T_{\mathbb C}\)-equivariantly holomorphically isometric
    to \(\mathbb C\times N\), where \(N\) is itself a complete toric
    K\"ahler--Ricci shrinker.
\end{enumerate}
\end{theorem}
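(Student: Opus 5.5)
The plan is to prove the toric theorem by combining first-order visibility, which is automatic in the noncompact toric case, with a linearization of the soliton vector field at a fixed point. This linearization gives $\operatorname{Ric}(v,v)=(1-\alpha_\beta)|v|^2$ for a suitable tangent vector $v$. The Spectral Gap and Splitting Theorem then finishes both parts.

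\textbf{Step 1 (a toric visible function).} Suppose $X$ is a noncompact complete toric shrinker with moment map $\mu:X\to P\subset\operatorname{Lie}(T)^*$. Since $X$ is noncompact and the moment map is proper (via $f$, as in Sun--Zhang), $P$ is an unbounded polyhedron. Its vertices correspond to the $T$-fixed points, and these include the zeros of $\nabla f$; I will take a fixed point $q$ at the vertex where $f$ attains its minimum. Because $P$ is unbounded and its recession cone is nonzero, $P$ has an unbounded edge. I would choose a vertex $v_0$ with an unbounded edge emanating from it; some vertex has this property because $P$ is pointed and unbounded. At the fixed point $q$ over $v_0$ there are $T_\mathbb C$-equivariant affine coordinates $z_1,\dots,z_n$ with characters $m_1,\dots,m_n$, namely the primitive edge generators at $v_0$. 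The character $m_1$ along the unbounded edge extends to a global regular function $\phi=\chi^{m_1}$. This is because $m_1$ pairs nonnegatively with every inward normal of $P$ (the edge direction lies in the recession cone), so $\chi^{m_1}$ is regular on the toric variety and lies in $R_X$, being $T$-homogeneous of weight $m_1$. By construction $\phi=z_1$ near $q$, so $d\phi(q)\neq0$. Hence $X$ is first-order visible with a homogeneous $\phi\in R_{X,\beta}$.

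\textbf{Step 2 (linearization).} At $q$ we have $\nabla f(q)=0$, and $\nabla f(\phi)=\alpha_\beta\phi$. Differentiating at $q$ gives $d\phi(q)\circ A=\alpha_\beta\, d\phi(q)$, where $A=\nabla^2 f(q)=\lambda-\operatorname{Ric}(q)$ with $\lambda=1$ in the normalization where weights satisfy $\alpha\ge1$. Since $A$ is symmetric and $J$-invariant, the $(1,0)$-covector $\partial\phi(q)\neq0$ is an eigencovector. This yields a unit vector $v$ with $\operatorname{Ric}(v,v)=1-\alpha_\beta$.

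\textbf{Step 3 (conclusions).} For part (1), suppose $\operatorname{Ric}>0$ and $X$ is noncompact. Steps 1 and 2 give $1-\alpha_\beta>0$, so $\alpha_\beta<1$, contradicting the spectral gap $\alpha_\beta\ge1$. Thus $X$ is compact, and compact shrinkers satisfy $c_1>0$, so $X$ is toric Fano. For part (2), $\operatorname{Ric}\ge0$ gives $\alpha_\beta\le1$, and combined with the gap this forces $\alpha_\beta=1$. Part 2 of the Spectral Gap Theorem then gives $X\cong\mathbb C\times N$ holomorphically isometrically. The splitting is generated by $\nabla\phi$, which is a parallel field. Since $\phi$ is a $T$-character, the splitting is $T_\mathbb C$-equivariant: the $\mathbb C$ factor is the orbit direction of the one-parameter subgroup dual to $m_1$. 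The factor $N$ inherits an effective action of the complementary $(n-1)$-torus, which commutes with the splitting and preserves $f_N$. Hence $N$ is a complete toric shrinker.

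\textbf{Main obstacle.} I expect the hardest step to be Step 1. One must justify that the edge character $\chi^{m_1}$ is truly a global regular function in $R_X$, i.e.\ lies in the coordinate ring of the affine cone $Y$, which is generated by pulled-back regular functions. It must also have nonzero differential at the vertex's fixed point. I would handle this using the description $R_X=H^0(X,\mathcal O_X)$ for the toric quasi-projective $X$: a character is regular iff it is nonnegative on all rays of the fan, which is equivalent to the edge direction lying in the recession cone of $P$. The secondary technical point is checking $T_\mathbb C$-equivariance of the splitting.
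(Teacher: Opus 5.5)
Your proposal is correct and follows essentially the same route as the paper: an unbounded edge of the Delzant polyhedron yields the global character $\chi^m$ with nonzero differential at the torus-fixed point over the vertex where that edge starts. Linearizing $\nabla f$ there gives the Ricci eigenvalue $1-\alpha$, and the spectral gap $\alpha\ge 1$ then forces either compactness or the weight-one splitting, after which one checks that the factor $N$ is toric. The paper fills in the two points you only sketch. First, it shows $\chi^m\in R_X$ without identifying the toric and Sun--Zhang algebraic structures outright: it descends $\chi^m$ to $Y^{\mathrm{an}}$ via analytic Stein factorization and uses that the Sun--Zhang regular functions are exactly the $T$-finite holomorphic ones; in any case the spectral gap argument only needs $\chi^m$ holomorphic with $\nabla f(\chi^m)=\alpha\chi^m$, $\alpha>0$. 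Second, it proves $T_{\mathbb C}$-equivariance of the splitting by noting that $T$ acts isometrically and so preserves $\ker d\,\operatorname{pr}_{\mathbb C}$ and its orthogonal complement, and then complexifying the infinitesimal action.
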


The same first-order mechanism also detects mixed Ricci signature in
explicit noncompact shrinkers. For the BCCD shrinker, we obtain the
following.

\begin{corollary}\label{BCCD}
Let \((X,g,J,f)\) be the BCCD shrinker $\operatorname{Bl}_{p}(\mathbb P^1\times\mathbb C)$, and let \(C\cup E\) be the reducible singular fiber of the natural
fibration $\operatorname{Bl}_{p}(\mathbb P^1\times\mathbb C)\longrightarrow
\mathbb C$. Then there exist points
\[
q_C\in C\setminus E,
\qquad
q_E\in E\setminus C,
\]
such that the Ricci tensor has a strictly negative eigenvalue at both
\(q_C\) and \(q_E\). Consequently, the Ricci tensor has mixed signature
at these two points.
\end{corollary}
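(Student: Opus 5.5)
The approach is the first-order mechanism: at a zero \(q\) of \(\nabla f\), a homogeneous regular function \(\phi\in R_{X,\beta}\) with \(d\phi(q)\neq 0\) forces \(\operatorname{Ric}\) to have an eigenvalue \(1-\alpha_\beta\). Indeed, \(\nabla f(\phi)=\alpha_\beta\phi\) is an identity of holomorphic functions, where \(\nabla f\) acts through its holomorphic part \(Z=\nabla^{1,0}f\). Differentiating at \(q\), where \(Z(q)=0\), gives
\[
d\phi(q)\circ (\nabla Z)(q)=\alpha_\beta\, d\phi(q).
\]
So \(\alpha_\beta\) is an eigenvalue of the transpose of the linearization \(A=(\nabla Z)(q)\) on \(T^{1,0}_qX\). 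The shrinker equation gives \(\nabla^2 f=g-\operatorname{Ric}\), so \(A\) is identified with \(\Id-\operatorname{Ric}\) at \(q\). This endomorphism is Hermitian, hence its transpose has the same eigenvalues. Therefore \(\operatorname{Ric}(q)\) has eigenvalue \(1-\alpha_\beta\). If \(\alpha_\beta>1\), this eigenvalue is strictly negative. By the Spectral Gap theorem, \(\alpha_\beta=1\) would force a holomorphic isometric splitting \(\mathbb C\times N\). Such a splitting is impossible for \(\operatorname{Bl}_p(\mathbb P^1\times\mathbb C)\): a splitting has no compact curves of negative self-intersection, yet \(E^2=-1\). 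Hence every nonconstant homogeneous \(\phi\) has \(\alpha_\beta>1\).

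It remains to produce fixed points \(q_C\in C\setminus E\) and \(q_E\in E\setminus C\), each with a suitable regular function. The soliton torus acts on \(X\) compatibly with the fibration \(p:X\to\mathbb C\); by Sun--Zhang, the polarized Fano fibration here is \(p\) itself, or factors through it, since \(\mathbb C\) is affine. The function \(w=p^*z\) on \(\mathbb C\) is regular on \(X\), and after translation it is homogeneous of positive weight, with the singular fiber over \(0\). The BCCD shrinker is toric; it is the blow-up of a torus fixed point of \(\mathbb P^1\times\mathbb C\). So \(J\nabla f\) generates a subtorus of \(T^2\), and the fixed points of the soliton torus include the torus-fixed points on \(C\cup E\). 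There are three of them: \(C\cap E\), a point \(q_C\in C\setminus E\), and a point \(q_E\in E\setminus C\). If the soliton vector field were a multiple of a circle acting trivially on some component, that component would be entirely fixed; I would rule this out using the explicit weights of the BCCD torus action, or simply pick the torus-fixed points in any case.

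The main step is verifying first-order visibility at each point, i.e.\ \(d\phi(q)\neq0\) for some regular \(\phi\). For this I use local toric coordinates. Near \(q_C\), the point away from the blown-up point on the fiber \(\{0\}\times\mathbb P^1\), the original coordinates \((u,z)\) on \(\mathbb P^1\times\mathbb C\) survive, with \(C\) given by \(\{z=0\}\). Multiplicity of the fiber \(p^{-1}(0)=C+E\) is one along \(C\), so \(dw=dz\neq0\) at \(q_C\). Near \(q_E\), take the blow-up chart with coordinates \((s,t)\), \(u=st\), \(z=t\)... with \(E=\{t=0\}\) and \(C\) missing. Here \(w=z\) vanishes to first order along \(E\), so again \(dw(q_E)\neq 0\) away from \(C\). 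At both points the weight of \(w\) exceeds \(1\), so \(\operatorname{Ric}\) has the negative eigenvalue \(1-\alpha\). The fiber is tangent to the kernel of \(dw\), and the Ricci form restricted to the compact curves \(C\) and \(E\) has positive total integral \(2\pi c_1\cdot C>0\) or something similar. This should give a positive eigenvalue somewhere along each curve; at the fixed points themselves I would instead show the other eigenvalue of \(A\) corresponds to the tangent weight of the curve, which is negative since the flow of \(\nabla f\) on a compact curve has a source and sink. That yields the mixed signature.
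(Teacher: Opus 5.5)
Your derivation of the negative eigenvalue is correct and is essentially the paper's argument. The key steps are:
\begin{itemize}
\item $w=p^*z$ is a $T$-weight vector; in the toric picture it is a character, hence $T$-finite and so in $R_X$.
\item The torus-fixed points $q_C,q_E$ off the node are zeros of $\nabla f$ with $dw\neq0$. Your blow-up charts do the job of the paper's reducedness argument for $C+E$.
\item Proposition~\ref{3.4} gives the Ricci eigenvalue $1-\alpha$ at these points.
\item $E^2=-1$ rules out a $\mathbb C\times N$ splitting, so $\alpha>1$ by Theorem~\ref{3.3}.
\end{itemize}
One small slip: once $w\in R_X$, it is $p$ that factors through $\pi$, not the other way round.

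The gap is in the mixed-signature step. At $q_C$ the second Ricci eigenvalue is $1-\mu_C$, where $\mu_C$ is the weight of the soliton action on $T_{q_C}C$, so you need $\mu_C<1$. The fixed points of that action on $C\cong\mathbb P^1$ are $q_C$ and the node $C\cap E$, and they carry opposite tangent weights. The fact that a flow on a compact curve has a source and a sink only tells you that \emph{one} of these two points has negative weight, not that it is $q_C$; the same problem occurs on $E$.

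In your coordinates, let $\xi_1,\xi_2$ be the $\nabla f$-weights of $u$ and $z$. Then $\mu_C=-\xi_1$ (coordinate $1/u$) and $\mu_E=\xi_1-\xi_2$ (coordinate $s=u/z$). So your claim is equivalent to $0<\xi_1<\xi_2$, i.e.\ to $f|_{C\cup E}$ being minimized at the node. Nothing in your argument supplies this information about the specific soliton field. It does hold for BCCD: the lattice involution of the fan exchanging $C$ and $E$, together with uniqueness of the soliton vector field, forces $\xi_1=\xi_2/2$. But that would have to be proved. Your other suggestion, $\int_C\operatorname{Ric}(\omega)>0$, only gives positivity somewhere on $C$, not at $q_C$.

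The missing idea is the paper's one-line argument. On every shrinker $R\geq 0$, and at $q_C$ the two complex Ricci eigenvalues sum to $R(q_C)/2\geq 0$. Since one of them is $1-\alpha<0$, the other is at least $\alpha-1>0$; equivalently, $\mu_C\leq 2-\alpha<1$. The same argument works at $q_E$.
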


This conclusion does not rely on an explicit formula for the BCCD
metric; it follows instead from the first-order behavior of the
polarized Fano fibration together with the weight structure of its
coordinate ring.

The above argument illustrates that first-order visibility can detect
local Ricci curvature information. The same mechanism also leads to
global rigidity results when the underlying complex manifold admits
sufficiently many global regular functions.

\begin{conjecture}\cite[Conjecture 6.3]{sun2024k}
On a Fano cone \(X\), every K\"ahler--Ricci shrinker is a Ricci-flat
K\"ahler cone metric.
\end{conjecture}

We verify this conjecture when the underlying affine variety is smooth,
including the cone vertex. Our result holds in arbitrary complex
dimension and requires no a priori curvature bounds, noncollapsing
assumptions, or asymptotic hypotheses.

\begin{theorem}[Smooth Fano Cone Rigidity]\label{SZCX}
Every K\"ahler--Ricci shrinker on a smooth Fano cone is
holomorphically isometric to the Gaussian shrinker on
\(\mathbb C^n\).
\end{theorem}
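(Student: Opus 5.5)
The plan is to apply first-order visibility at the cone vertex, with all $n$ coordinate directions at once, and combine it with the spectral gap in the Spectral Gap and Splitting theorem and the positivity of scalar curvature on shrinkers.

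Let $X=\operatorname{Spec}R$ be the smooth Fano cone carrying the shrinker $(g,J,f)$, with vertex $o$. Since $X$ is affine, its coordinate ring is $R_X=R$ and the polarized Fano fibration is essentially the identity, so the weight decomposition of $R_X$ is the cone grading.

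\emph{Step 1: $\nabla f$ vanishes at $o$.} I would show that $o$ is the unique fixed point of the soliton torus $T$. The function $f$ is proper and bounded below, so it has a critical point, and every critical point is fixed by the flow of $J\nabla f$. For the same reason every $T$-fixed point is a zero of $\nabla f$. Any $T$-fixed point is sent by every homogeneous function of weight $\alpha_\beta>0$ to $0$, so it must be the vertex, because all nonconstant homogeneous elements of $R$ have positive weight. Hence $\nabla f(o)=0$, and in particular $X$ is first-order visible at $o$.

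\emph{Step 2: linearize at $o$.} Since $X$ is smooth at $o$, the cotangent space $\mathfrak m_o/\mathfrak m_o^2$ has dimension $n$. It is $T$-graded because $\mathfrak m_o$ is a homogeneous ideal. I therefore choose homogeneous $\phi_1,\dots,\phi_n\in R_{X,\beta_i}$ whose differentials $d\phi_i(o)$ form a basis of $T^{*1,0}_oX$. Differentiating $\nabla f(\phi_i)=\alpha_{\beta_i}\phi_i$ at the zero $o$ gives
\[
d\phi_i(o)\circ\nabla(\nabla f)(o)=\alpha_{\beta_i}\,d\phi_i(o).
\]
The endomorphism here is $\nabla^2 f(o)=\operatorname{Id}-\operatorname{Ric}(o)$, taking $\lambda=1$ as the paper's normalization does. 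It is $J$-linear because $\nabla f$ is holomorphic. Hence the $\alpha_{\beta_i}$ are exactly the $n$ complex eigenvalues of $\operatorname{Id}-\operatorname{Ric}(o)$ on $T^{1,0}_o$, counted with multiplicity. So $\operatorname{Ric}(o)$ has eigenvalues $\rho_i=1-\alpha_{\beta_i}$.

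\emph{Step 3: conclude.} The Spectral Gap and Splitting theorem gives $\alpha_{\beta_i}\ge1$, so $\rho_i\le0$ for all $i$ and the scalar curvature satisfies $R(o)\le0$. On the other hand, every complete shrinker has $R\ge0$, with $R>0$ everywhere unless it is flat; this is the result of Chen and Pigola--Rimoldi--Setti. Therefore $R(o)=0$, so $g$ is flat, and a flat complete Kähler shrinker on a simply connected cone is the Gaussian shrinker on $\mathbb C^n$. Alternatively, $R(o)=0$ forces all $\alpha_{\beta_i}=1$, so $\dim E_1\ge n$. Part 3 of the Spectral Gap and Splitting theorem then gives $X\cong\mathbb C^n\times N$ with $N$ zero-dimensional, i.e. $X=\mathbb C^n$ with the Gaussian shrinker.

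I expect the main obstacle to be Step 2. One must check carefully that the linearization identity gives exactly the eigenvalues of $\nabla^2f(o)$ with the correct sign and correct complex normalization, with no factor of $2$ from the real/complex conventions. One must also check that the $d\phi_i(o)$ span the whole cotangent space, which is where smoothness at the vertex is essential. I would first verify both points on the Gaussian model: there $\phi_i=z_i$, $f=|z|^2/4$, and all weights equal $1$.
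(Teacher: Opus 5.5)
Your proposal is correct and follows essentially the same route as the paper's proof via the Affine Rigidity Theorem~\ref{55}: homogeneous representatives of a basis of $\mathfrak m_o/\mathfrak m_o^2$ at the torus-fixed vertex, the linearization of Proposition~\ref{3.4}, the gap $\alpha_{\beta_i}\ge 1$ from Theorem~\ref{3.3}, and $R\ge 0$ forcing $R(o)=0$ and hence all weights equal to $1$. The differences are minor: you make the strong maximum principle the primary ending and Theorem~\ref{3.3}(3) the alternative, which is the reverse of the paper; the grading you actually use is the soliton-torus weight decomposition of $R_X$, which need not coincide with the cone's given grading, though your argument only uses the former; and with the normalization $\operatorname{Ric}+\nabla^2 f=g$ the Gaussian potential in your sanity check should be $|z|^2/2$, not $|z|^2/4$.
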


More generally, the same rigidity mechanism applies whenever the
underlying complex manifold is Stein, yielding the following
intrinsic formulation.

\begin{theorem}[Stein Rigidity]\label{GGG}
Let \((X^n,g,J,f)\) be a complete K\"ahler--Ricci shrinker. If the
underlying complex manifold \(X\) is Stein, then
\((X,g,J,f)\) is holomorphically isometric to the Gaussian shrinker
on \(\mathbb C^n\).
\end{theorem}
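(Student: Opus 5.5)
The plan is to show that $\dim_{\mathbb C}E_1=n$ and then invoke part (3) of the Spectral Gap and Splitting theorem. Since $X$ is Stein, it has no positive-dimensional compact analytic subvarieties. Hence the polarized Fano fibration $\pi:X\to Y=\operatorname{Spec}R_X$ of Sun--Zhang, being proper with connected fibers, has finite fibers and is in fact an isomorphism onto its image. More directly, $X$ is then affine with coordinate ring $R_X$, so $X\cong Y$ is a smooth affine cone with the $T$-action. On a shrinker, $f$ is proper and bounded below, so $\nabla f$ vanishes somewhere. The fixed point set of $T$ on the cone $Y$ is the vertex $o$, because every $\alpha_\beta>0$ and the flow of $-\nabla f$ contracts everything to $o$. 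So $o$ is the unique zero $q$ of $\nabla f$, and $X$ is smooth at $q$.

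Next I will produce first-order visibility at $q$ in every direction. Because $X$ is Stein, or because $Y$ is affine and smooth at $o$, the differentials of regular functions span $T^{*1,0}_qX$. Decomposing into homogeneous components, I obtain homogeneous $\phi_1,\dots,\phi_n\in R_{X,\beta_j}$ vanishing at $q$ with $d\phi_j(q)$ linearly independent. Now I linearize at $q$. Write $V=\nabla^{1,0}f$, a holomorphic vector field vanishing at $q$. Its linearization $A=\nabla V(q)$ acts on $T^{1,0}_q$ and, by the soliton equation, equals $\lambda\,\Id-\Ric(q)$ (normalizing $\lambda=1$). Differentiating $V(\phi_j)=\alpha_{\beta_j}\phi_j$ at $q$ gives $d\phi_j(q)\circ A=\alpha_{\beta_j}\,d\phi_j(q)$. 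So the $\alpha_{\beta_j}$ are exactly the eigenvalues of $A^*$, and the eigenvalues of $\Ric(q)$ are $1-\alpha_{\beta_j}$.

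Then I combine this with the spectral gap. The theorem gives $\alpha_{\beta_j}\ge1$, so $\Ric(q)\le0$. On the other hand, at a critical point of $f$ the standard identity $R+|\nabla f|^2=f$ (suitably normalized, with $f\ge0$ by Chen's $R\ge0$) combined with $R\ge0$ yields $R(q)\ge0$. Thus $\Ric(q)=0$ and every eigenvalue of $\Ric(q)$ vanishes. Hence $\alpha_{\beta_j}=1$ for all $j$, so $\phi_1,\dots,\phi_n\in E_1$ are linearly independent and $\dim E_1\ge n$. Part (3) then gives $X\cong\mathbb C^n\times N$ with $N$ of dimension $0$, so $X$ is holomorphically isometric to the Gaussian shrinker. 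The same argument gives Theorem~\ref{SZCX}, since a smooth Fano cone is affine and therefore Stein.

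The main obstacle I expect is the first step: justifying that Stein forces $\pi$ to be an isomorphism, so that $\nabla f$ has a zero $q$ at which the homogeneous elements of $R_X$ separate tangent directions. Holomorphic functions on a Stein manifold separate tangent vectors, but one needs them to be \emph{regular}, i.e.\ in $R_X$. I would handle this by noting that the fibers of $\pi$ are compact, connected subvarieties, hence points, so that $\pi$ is proper, quasi-finite and birational, hence finite. It is then an isomorphism onto the normal variety $Y$ by Zariski's main theorem. The differentials of $R_X=\mathcal O(Y)$ then span at the smooth point $q=o$.
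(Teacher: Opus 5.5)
Your proposal is correct and follows essentially the same route as the paper. You show that $\pi$ has finite fibers and is therefore an isomorphism, and you then run the Affine Rigidity argument: homogeneous $\phi_j$ with independent differentials at the fixed point, linearization giving Ricci eigenvalues $1-\alpha_j\le 0$, $R\ge 0$ forcing every $\alpha_j=1$, and part (3) of the splitting theorem. The paper gets the isomorphism from $\pi$ being a finite, hence affine, morphism with $\pi_*\mathcal O_X=\mathcal O_Y$, so $X\simeq\underline{\operatorname{Spec}}_Y(\pi_*\mathcal O_X)=Y$; your route through Zariski's main theorem works equally well. One small detail to state explicitly: $\dim_{\mathbb C}E_1\le n$, because $d\phi$ is parallel for every $\phi\in E_1$. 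So part (3) applies with $k=n$ and $N$ is a point.
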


While this manuscript was being completed, we became aware of the
independent work of Conlon--Deruelle
\cite{ConlonDeruelle26}. The conclusions of
Theorems~\ref{SZCX} and~\ref{GGG} also follow from their results.
Indeed, a Stein manifold is \(1\)-convex with empty exceptional set,
and hence \cite[Proposition~2.9]{ConlonDeruelle26} implies that the
underlying complex manifold of such a shrinker is biholomorphic to
\(\mathbb C^n\). Viewing \(\mathbb C^n\) as the trivial resolution of
the flat K\"ahler cone, their
\cite[Corollary~B]{ConlonDeruelle26} then implies that the shrinker is,
up to pullback by a biholomorphism, the Gaussian shrinker. Since every
smooth affine cone is Stein, this also yields the smooth Fano cone
case. Their approach and ours are substantially different.
Conlon--Deruelle establish asymptotic conicality for shrinkers on
resolutions of K\"ahler cones and then invoke uniqueness results for
asymptotically conical shrinkers. By contrast, our proof is local and
spectral-algebraic: it combines the weighted spectral gap with the
first-order behavior of homogeneous regular functions at a fixed
point, and derives Gaussian splitting without first establishing
curvature decay or asymptotic conicality.

This paper is organized as follows. Section~2 reviews basic
background on Ricci shrinkers, toric geometry, polarized Fano
fibrations, and algebraic preliminaries. In Section~3, we combine the
polarized Fano fibration structure with the weighted spectral theory
of homogeneous regular functions and establish the spectral gap
theorem with its rigidity consequences. Section~4 studies
first-order visibility for noncompact K\"ahler--Ricci shrinkers and
applies it to establish the main rigidity results for
surfaces and toric shrinkers. Section~5 applies first-order
visibility to analyze the Ricci curvature of noncompact shrinker
examples. In Section~6, we apply this framework to
the Fano cone rigidity problem.

%\subsectionnotoc{Notation and Conventions} 
 
%\subsectionnotoc{Acknowledgments} 
 
%%%%%%%%%%%%%%%%%%%%%%%%%%%%%%%%%%
%%%%%%%%%%%%%%%%%%%%%%%%%%%%%%%%%%

%%%%%%%%%%%%%%%%%%%%%%%%%%%
%%%%%%%%%%%%%%%%%%%%%%%%%%%%%

\section{Preliminaries}

\subsection{Ricci shrinker}
\begin{definition}\label{soliton}
    A Ricci shrinker is a triple $(X,g,f)$, where $X$ is a complete Riemannian manifold with a vector field $\nabla^gf$ satisfying the equation
\[
\Ric(g) + \Hess_g(f) = g,
\]
the vector field $\nabla^g f$ is called the soliton vector field.

If $g$ is complete and K\"ahler with K\"ahler structure $J$, then we say that $(X,g,J,f)$ is a K\"ahler Ricci shrinker, $\nabla^g f$ is complete and real holomorphic, and
\[
\operatorname{Ric}(\omega) + \sqrt{-1}\partial \bar{\partial} f = \omega,
\]
where $\operatorname{Ric}(\omega) $ is the Ricci form of Kähler form $\omega$.
\end{definition}

The above definition implies the standard normalization conditions and soliton identities associated to the shrinker.

\begin{lemma}\label{iden}
    Let $(X, g, f)$ be a n-dimensional Ricci shrinker, with soliton vector field $\nabla^g f$ for a smooth real-valued function $f:M\to \mathbb{R}$. Then we have the following soliton identities:
    \begin{align*}
        R_g+|\nabla^g f|^2=2f, \qquad R_g+\Delta f=n, \qquad R\geq 0.
    \end{align*}
\end{lemma}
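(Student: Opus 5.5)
The plan follows Hamilton's classical argument; the only inputs are the soliton equation $\Ric+\Hess f=g$, the contracted second Bianchi identity, and (for the sign of $R$) completeness.

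\textbf{Step 1: $R+\Delta f=n$.} Taking the trace of $\Ric+\Hess f=g$ gives $R+\Delta f=n$ at once.

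\textbf{Step 2: $R+|\nabla f|^2-2f$ is constant.} Take the divergence of the soliton equation. The contracted Bianchi identity gives $\nabla^i R_{ij}=\tfrac12\nabla_j R$. Commuting derivatives gives $\nabla^i\nabla_i\nabla_j f=\nabla_j\Delta f+R_{jk}\nabla^k f$. Hence
\[
\tfrac12\nabla_j R+\nabla_j\Delta f+R_{jk}\nabla^k f=0 .
\]
Substituting $\Delta f=n-R$ from Step 1 yields $\nabla_j R=2R_{jk}\nabla^k f$. We also have
\[
R_{jk}\nabla^k f=\nabla_j f-\nabla_j\nabla_k f\,\nabla^k f=\nabla_j f-\tfrac12\nabla_j|\nabla f|^2 .
\]
Combining these, $\nabla\bigl(R+|\nabla f|^2-2f\bigr)=0$. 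So $R+|\nabla f|^2-2f$ is a constant $C$ on the connected manifold $X$. Since $f$ is determined only up to an additive constant, we replace $f$ by $f+C/2$. This is the standard normalization the lemma refers to, and it gives $R+|\nabla f|^2=2f$.

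\textbf{Step 3: $R\ge0$ (the main obstacle).} This is the only step that uses completeness. Tracing the evolution equation, or computing directly, gives
\[
\Delta_f R=R-2|\Ric|^2 \le R-\tfrac{2}{n}R^2, \qquad \Delta_f=\Delta-\nabla f\cdot\nabla .
\]
If $X$ is compact, look at a minimum point of $R$. There $0\le\Delta_f R\le R_{\min}-\tfrac2nR_{\min}^2$, and if $R_{\min}<0$ the right-hand side is strictly negative, a contradiction.

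In the noncompact case I would not run a maximum principle argument directly. Instead I would cite B.-L. Chen's theorem that any complete ancient solution of Ricci flow has $R\ge0$. The shrinker generates such a solution via $g(t)=(1-t)\phi_t^*g$, $t<1$. This is legitimate because $\nabla f$ is complete, by Zhang's theorem, so the flow $\phi_t$ exists for all times.

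Alternatively, one can give a localized argument. Use a cutoff built from $f$ (Cao–Zhou growth) together with the inequality $\Delta_f R\le R-\tfrac2nR^2$. Where $R<0$, the quadratic term dominates, and an Omori–Yau type maximum principle for $\Delta_f$ forces $\inf R\ge0$. Since these are standard facts, the proof in the paper will likely just cite them. The care needed is only in making sure the negative part of $R$ cannot escape to infinity, which is exactly what the completeness input handles.
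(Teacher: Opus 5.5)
Your proposal is correct and follows essentially the same route as the paper, which treats $R+\Delta f=n$ and $R+|\nabla f|^2=2f$ (after normalizing $f$ by a constant) as a standard direct computation and cites B.-L.\ Chen's theorem on complete ancient solutions for $R\ge 0$. One harmless slip: you used the formulas for the normalization $\operatorname{Ric}+\nabla^2 f=\tfrac12 g$, whereas under the paper's normalization $\operatorname{Ric}+\nabla^2 f=g$ they read $\Delta_f R=2R-2|\operatorname{Ric}|^2$ and $g(t)=(1-2t)\phi_t^*g$; this affects neither the minimum-point argument nor the appeal to Chen.
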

\begin{proof}
    The first two inequalities hold by direct computation,  See \cite{chow2023ricci} for the complete proof. $R\geq 0$ follows from \cite{chen2009strong}.
\end{proof}
All subsequent reasoning adopts the conventions introduced in Definition \ref{soliton} and Lemma \ref{iden}, unless extra notation is explicitly indicated.

Another useful result is that the soliton potential of a complete non-compact Ricci shrinker grows quadratically in distance. In particular, $f$ is proper.

\begin{theorem}\cite{cao2010complete}
    Let $(X, g, f)$ be a complete non-compact Ricci shrinker with smooth real-valued potential function $f \colon X \to \mathbb{R}$. Then for all $x \in X$, $f$ satisfies the estimates
\[
\frac14 \bigl(d_g(p,x) - c_1\bigr)^2 - C \le f(x) \le \frac14 \bigl(d_g(p,x) + c_2\bigr)^2
\]
for some constant $C>0$, where $d_g(p,\,\cdot\,)$ denotes the distance to a fixed point $p\in X$ with respect to $g$. Here, $c_1$ and $c_2$ are positive constants depending only on the real dimension of $X$ and the geometry of $g$ on the unit ball $B_p(1)$ centered at $p$.
\end{theorem}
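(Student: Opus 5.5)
The estimate is that of Cao--Zhou. The plan is to obtain the upper bound from the gradient bound $|\nabla f|\le\sqrt f$, and the lower bound from a second-variation argument along minimizing geodesics. Throughout we use the identities of Lemma~\ref{iden}: $R+|\nabla f|^2=f$ in the normalization $\Ric+\Hess f=\tfrac12 g$ that yields the $\tfrac14$ constants, or equivalently after rescaling, together with $R\ge 0$.

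\textbf{Upper bound.} Since $R\ge0$, we have $|\nabla f|^2\le f$. Adding a constant to $f$ if necessary so that $f>0$, this gives $|\nabla\sqrt{f}|\le \tfrac12$. Integrating along a minimizing geodesic from $p$ to $x$ yields
\[
\sqrt{f(x)}\le \sqrt{f(p)}+\tfrac12 d(p,x).
\]
Here $c_2=2\sqrt{f(p)}$. This constant is controlled by the geometry near $p$: bounding $f(p)$ uses $f\le |\nabla f|^2+R$ and local data, or one may choose $p$ to be a minimum point of $f$ when it exists.

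\textbf{Lower bound.} Fix a unit-speed minimizing geodesic $\gamma:[0,s]\to X$ from $p$ to $x$, with $s=d(p,x)$ large. Integrating the soliton equation along $\gamma$ gives
\[
\tfrac12 s-\int_0^s\Ric(\gamma',\gamma')\,dt=\int_0^s\Hess f(\gamma',\gamma')\,dt=\langle\nabla f,\gamma'\rangle(s)-\langle\nabla f,\gamma'\rangle(0).
\]
The key step is an upper bound on $\int_0^s\Ric(\gamma',\gamma')$. Apply the second variation of length with test fields $\phi E_i$, where $E_i$ is a parallel orthonormal frame normal to $\gamma'$ and $\phi$ is the cutoff equal to $t$ on $[0,1]$, to $1$ on $[1,s-1]$, and to $s-t$ on $[s-1,s]$. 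Minimality then gives
\[
\int_0^s\phi^2\Ric(\gamma',\gamma')\le (n-1)\int_0^s|\phi'|^2=2(n-1).
\]
The cutoff regions $[0,1]$ and $[s-1,s]$ must be handled separately:
\begin{itemize}
\item On $[0,1]$, $\Ric$ is bounded by a constant depending on $B_p(1)$.
\item On $[s-1,s]$, write $\Ric=\tfrac12 g-\Hess f$ and integrate $(1-\phi^2)\Hess f(\gamma',\gamma')$ by parts. This produces a term bounded by $\sup_{[s-1,s]}|\nabla f|\le\sqrt{f(x)}+C$.
\end{itemize}
Combining these,
\[
\langle\nabla f,\gamma'\rangle(s)\ge \tfrac12 s-C-\sqrt{f(x)}\cdot(\text{small})\ \ldots
\]
More carefully, one obtains $|\nabla f|(x)\ge \tfrac12 s-c_1'$, modulo a $|\nabla f|$-term controlled by $\sup_{B_x(1)}\sqrt f$, which is comparable to $\sqrt{f(x)}+\tfrac12$ by the upper bound. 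Since $f\ge|\nabla f|^2$, this gives $\sqrt{f(x)}\ge \tfrac12(s-c_1)$, and hence the stated lower bound.

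\textbf{Main obstacle.} The delicate point is the endpoint cutoff near $x$: the $\Hess f$ contribution there must be absorbed, with constants depending only on $n$ and $B_p(1)$. I would follow Cao--Zhou exactly, bounding $\int_{s-1}^s$ terms via $|\nabla f|\le\sqrt f$ and the already proven upper growth. This gives an inequality of the form $\sqrt{f(x)}\ge \tfrac12 s - c - \tfrac{\text{const}}{\sqrt{f(x)}}$, which after rearrangement yields the claim with an additive constant $C$.
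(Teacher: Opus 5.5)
The paper gives no argument of its own here; it only cites Cao--Zhou. Your outline (the $\tfrac12$-Lipschitz bound on $\sqrt f$ for the upper estimate, second variation along a minimizing geodesic for the lower one) is their argument, and your upper bound is fine. The gap is at the step you flag as delicate: what you assert there is not what the computation gives.

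Put $a(t)=\langle\nabla f,\gamma'\rangle(\gamma(t))$, so $a'=\tfrac12-\Ric(\gamma',\gamma')$. Integrating $(1-\phi^2)a'$ by parts over $[s-1,s]$ gives $a(s)-\int_{s-1}^{s}2(s-t)\,a\,dt$. Hence the endpoint part of $\int_0^s\Ric(\gamma',\gamma')$ equals $\tfrac13-a(s)+\int_{s-1}^{s}2(s-t)\,a\,dt$. The $-a(s)$ cancels \emph{exactly} the $a(s)$ on the left of your integrated soliton identity, and what survives is
\[
\int_{s-1}^{s}2(s-t)\,a(t)\,dt\;\ge\;\frac{s}{2}-2(n-1)-C_0 ,
\]
with $C_0$ depending on $\sup_{B_p(1)}|\Ric|$ and $|\nabla f|(p)$.

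This is not a lower bound for $|\nabla f|(x)$ ``modulo'' an error, and the averaged term is not small. The weight $2(s-t)$ has total mass exactly $1$ on $[s-1,s]$; shortening the cutoff interval does not help, it only inflates $\int|\phi'|^2$. The averaged term is in fact the main term. Bounding it by $\sup_{t\in[s-1,s]}|\nabla f|(\gamma(t))\le\sup_{t\in[s-1,s]}\sqrt{f(\gamma(t))}\le\sqrt{f(x)}+\tfrac12$ gives $\sqrt{f(x)}\ge\tfrac s2-c_1$ at once.

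The inequalities you write are not what this computation produces: $\langle\nabla f,\gamma'\rangle(s)\ge\tfrac12 s-C-(\text{small})\sqrt{f(x)}$, ``$|\nabla f|(x)\ge\tfrac12 s-c_1'$ modulo a $|\nabla f|$-term'', and $\sqrt{f(x)}\ge\tfrac12 s-c-\mathrm{const}/\sqrt{f(x)}$. If you keep $a(s)$ on the left and bound the endpoint contribution by $\sup|\nabla f|$ without using the cancellation, you only get $\sqrt{f(x)}\ge\tfrac s4-C$. That loses the sharp constant $\tfrac14$, so the cancellation is the missing idea. Cao--Zhou avoid the splitting altogether: multiply $a'=\tfrac12-\Ric(\gamma',\gamma')$ by $\phi^2$ and integrate by parts over all of $[0,s]$. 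This yields the displayed inequality directly, with a constant depending on $\sup_{B_p(1)}|\nabla f|$ rather than on $\Ric$.

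Two smaller points.

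First, your phrase ``or equivalently after rescaling'' hides that the constant $\tfrac14$ belongs to Cao--Zhou's normalization $\Ric+\Hess f=\tfrac12 g$, $R+|\nabla f|^2=f$. In the paper's convention $\Ric+\Hess f=g$ (Definition~\ref{soliton}), the same proof gives $\tfrac12(d\mp c)^2$, and the Gaussian $f=|x|^2/2$ violates the upper bound as literally stated. You should say explicitly which normalization you prove the estimate in.

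Second, $c_2=2\sqrt{f(p)}$ depends on $f$ near $p$, not only on $g|_{B_p(1)}$; the same holds for $c_1$, through $|\nabla f|$ near $p$. A shifted Gaussian has flat $B_p(1)$ and arbitrary $f(p)$. So ``$f\le|\nabla f|^2+R$ and local data'' does not bound it. The clean fix is the one you mention: take $p$ to be a minimum point of $f$, which exists once the lower bound gives properness. There $f(p)=R(p)\le\tfrac n2$.
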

In \cite{cao2010complete}, their normalization is $R+|\nabla f|^2=f$, under the normalization $R+|\nabla f|^2=2f$, the potential function $f$ also has quadratic growth and is proper.

\subsection{Toric geometry}
In this subsection, we revise some standard facts from toric geometry, based on \cite{cifarelli2024finite} and \cite{cox2011graduate}.

We first review some fundamental notion of polyhedrons. Let $E$ be a real vector space of dimension $n$ and let $E^*$ denote the dual. Write \(\langle\cdot,\cdot\rangle\) for the natural pairing \(E^*\times E\to\mathbb{R}\). Let \(\Gamma\subset E\) be a lattice, with dual lattice \(\Gamma^*\subset E^*\).
\begin{definition}\cite[Definition 2.8]{cifarelli2024finite}
    A polyhedron $P$ in $E$ is a finite intersection of closed half spaces:
    \begin{align*}
        P = \bigl\{ m \in E \bigm| \langle m, u_i \rangle \geq -a_i,\ i=1,\dots,s \bigr\}, \qquad u_i \in E^*,\ a_i\in \mathbb{R}.
    \end{align*}
    It is called a polyhedral cone if all $a_i=0$ and set $C$, the dual of polyhedral cone $C$ is defined by $C^\vee = \{ x \in E^* \mid \langle x, C \rangle \ge 0 \}$.
\end{definition}

\begin{definition}\cite[Definition 2.9]{cifarelli2024finite}
    A polyhedron $P \subset E^*$ is called Delzant if its set of vertices is non-empty and each vertex $v \in P$ has the property that there are precisely $n$ edges $\{e_1, \dots, e_n\}$ (one-dimensional faces) emanating from $v$, and there exists a basis $\{\varepsilon_1, \dots, \varepsilon_n\}$ of $\Gamma^*$ such that $\varepsilon_i$ lies along the ray $\mathbb{R}(e_i - v)$.
\end{definition}
The Delzant polyhedron is strongly convex i.e. it does not contain any affine subsapce of $E$. Another important concept is the recession cone(asymptotic cone).
\begin{definition}\cite[Section 7.1]{cox2011graduate}
    Let $P\subseteq E$ be a polyhedron, its recession cone is defined by 
    \begin{align*}
        \operatorname{rec}(P) = \{ m \in E \mid \langle m, u_i \rangle \geq 0,\ i=1,\dots,s \}
    \end{align*}
\end{definition}

An equivalent definition of $\operatorname{rec}(P)$ is the set of $m\in E$ with the property that there exists $m_0\in E$ such that $m+tm_0\in P$ for sufficiently large $t>0$.

\begin{definition}\cite[Definition 2.16]{cifarelli2024finite}\label{toricc}
  A \textit{toric manifold} is an $n$-dimensional complex manifold $X$ endowed with an effective holomorphic action of the algebraic torus $(\mathbb{C}^*)^n$ such that the following hold true:
  \begin{enumerate}
    \item The fixed point set of the $(\mathbb{C}^*)^n$-action is compact.
    \item There exists a point $p \in X$ with the property that the orbit $(\mathbb{C}^*)^n \cdot p \subset X$ forms a dense open subset of $X$.
  \end{enumerate}
\end{definition}
Let $T^n$ denote the compact real form of algebraic torus $(\mathbb{C}^*)^n$ with Lie algebra $\mathfrak{t}$. There exists a natural integer lattice $N_\mathbb{Z}:=\Gamma\cong \mathbb{Z}^n$ in $\mathfrak{t}$, and we set its dual lattice $M_{\mathbb{R}}\cong \mathbb{Z}^n$ in $\mathfrak{t}^*$. The following combinatorial definition plays an important role in toric geometry and can be used to construct toric varieties.
\begin{definition}\cite[Definition 2.17]{cifarelli2024finite}\label{fann}
    A fan $\Sigma$ in $\mathfrak{t}$ is a finite set of rational polyhedral cones $\sigma$ satisfying
\begin{enumerate}
    \item For every $\sigma \in \Sigma$, each face of $\sigma$ also lies in $\Sigma$.
    \item For every pair $\sigma_1, \sigma_2 \in \Sigma$, $\sigma_1 \cap \sigma_2$ is a face of each.
\end{enumerate}
\end{definition}
A toric manifold can always be constructed from a fan.
\begin{proposition}\label{fan}\cite[Corollary 3.18]{cox2011graduate}
    Let $X$ be a smooth toric manifold. Then there exists a fan $\Sigma$ such that $X\cong X_{\Sigma}$.
\end{proposition}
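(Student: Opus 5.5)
The statement is Proposition~\ref{fan}: every smooth toric manifold in the sense of Definition~\ref{toricc} is isomorphic to $X_\Sigma$ for some fan $\Sigma$. The plan is to reduce it to the standard structure theorem for normal toric varieties in \cite{cox2011graduate}. That theorem says a normal separated variety with an effective algebraic torus action having a dense open orbit comes from a fan. The real work is to check that a toric manifold of Definition~\ref{toricc} fits that framework: it must be a normal separated algebraic variety, and the holomorphic $(\mathbb{C}^*)^n$-action must be algebraic.

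First, fix a point $p$ with dense open orbit $U=(\mathbb{C}^*)^n\cdot p$. Because the action is effective and $\dim U=n$, the stabilizer of $p$ is a discrete normal subgroup acting trivially on $U$. Since $U$ is dense, it acts trivially on $X$, so it is trivial and $U\cong(\mathbb{C}^*)^n$ equivariantly.

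Second, cover $X$ by affine, torus-invariant open charts. By Sumihiro-type reasoning in the holomorphic setting, each point $x\in X$ has a torus-invariant neighbourhood. Take the orbit closure structure: every orbit closure contains a fixed point, using that the fixed-point set is compact and that limits $\lim_{t\to0}\lambda(t)\cdot x$ along one-parameter subgroups $\lambda\in N_{\mathbb Z}$ exist for suitable $\lambda$. Then linearize the action at each fixed point $v$ (Bochner/Cartan linearization for the compact torus $T^n$, complexified). This gives a $(\mathbb{C}^*)^n$-equivariant chart $U_v\cong\mathbb{C}^n$ with linear diagonal action. Its weights $\varepsilon_1,\dots,\varepsilon_n$ form a $\mathbb Z$-basis of $M$, by effectiveness and smoothness. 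The chart therefore corresponds to the smooth cone $\sigma_v=\{u\in\mathfrak t:\langle\varepsilon_i,u\rangle\ge0\}$, and $U_v=\Spec\mathbb{C}[\sigma_v^\vee\cap M]$.

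Third, set $\Sigma$ to be the set of the cones $\sigma_v$ and all their faces. The face condition of Definition~\ref{fann} is immediate. For the intersection condition, use that the charts $U_v$ and $U_w$ glue along the common torus $U$. The one-parameter subgroups $\lambda$ with $\lim_{t\to0}\lambda(t)p$ existing in both charts are exactly those in $\sigma_v\cap\sigma_w$, and that limit lands in the chart $U_\tau$ of a common face $\tau$. Hausdorffness of $X$ forces $\sigma_v\cap\sigma_w$ to be a face of each, via the standard separatedness criterion. Then the identity on $U$ extends to a biholomorphism $X_\Sigma\to\bigcup_v U_v$.

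The main obstacle is proving $\bigcup_v U_v=X$, i.e.\ that every orbit lies in some linearized chart around a fixed point; the same issue underlies algebraicity. My first attempt would use the compactness of the fixed-point set together with the real moment-map/gradient-flow picture. Choose a generic $\lambda$ so that the real flow of $J\lambda$ (the $\mathbb R_{>0}$ part) is a Morse–Bott gradient flow of a proper function on $X$; properness can be taken from the shrinker potential in our applications. Every point then flows into some fixed point $v$, and equivariance places its orbit in $U_v$. Once this covering is established, the remaining identifications are routine applications of \cite{cox2011graduate}.
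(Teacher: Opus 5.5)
\textbf{There is a genuine gap.} The paper gives no argument of its own. It cites the structure theorem of \cite{cox2011graduate}, which concerns normal separated \emph{algebraic} toric varieties and rests on Sumihiro's theorem. You correctly see that the real issue for the holomorphic objects of Definition~\ref{toricc} is algebraicity, but the mechanism you propose does not work.

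\textbf{The covering step is false under the stated hypotheses.} Your key claim is that every orbit closure contains a torus-fixed point, so that the linearized charts $U_v$ cover $X$. Compactness of the fixed-point set says nothing about whether limits $\lim_{t\to0}\lambda(t)\cdot x$ exist. Two counterexamples:
\begin{itemize}
\item $\mathbb{C}^*\times\mathbb{C}$ has no fixed points at all.
\item $\mathbb{P}^2\setminus\{[0:1:0],[0:0:1]\}$, with the standard torus action, has exactly one fixed point. The orbit $\{[0:y:z]:yz\neq0\}\cong\mathbb{C}^*$ is closed in it and misses that fixed point.
\end{itemize}
In such cases your fan $\{\sigma_v\text{ and their faces}\}$ omits every cone that is not a face of an $n$-dimensional cone (in the second example, the ray $\mathbb{R}_{\geq0}(-1,-1)$). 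Hence $\bigcup_v U_v\neq X$. To repair this you would need $(\mathbb{C}^*)^n$-equivariant charts $\mathbb{C}^k\times(\mathbb{C}^*)^{n-k}$ around \emph{every} orbit, which amounts to a holomorphic slice theorem for the noncompact group $(\mathbb{C}^*)^n$. Your proposal does not supply this. The ``Sumihiro-type'' invariant neighbourhoods and the analytic version of the separatedness criterion are likewise only asserted.

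\textbf{No local argument can close the gap.} Some global finiteness input is indispensable. Take the infinite fan $\{0\}\cup\{\mathbb{R}_{\geq0}(1,k):k\in\mathbb{Z}\}$ in $\mathbb{R}^2$. It glues to a Hausdorff complex surface with an effective $(\mathbb{C}^*)^2$-action, a dense open orbit, and empty (hence compact) fixed-point set. However, the Thom--Gysin sequence for the divisors $D_k\cong\mathbb{C}^*$ shows that $H^2$ contains $\prod_{k\in\mathbb{Z}}\mathbb{Z}$ modulo the image of $H^1((\mathbb{C}^*)^2)\cong\mathbb{Z}^2$. So $H^2$ is not finitely generated, and the surface is not even homotopy equivalent to $X_\Sigma$ for a finite fan as in Definition~\ref{fann}.

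\textbf{Your remedy uses hypotheses the proposition does not have.} The proper Morse--Bott moment-map component, taken from the shrinker potential, requires a $T^n$-invariant K\"ahler form, a Hamiltonian moment map, and properness. None of these are among the hypotheses. Under those extra hypotheses your sketch becomes essentially the noncompact Delzant theorem of \cite{cifarelli2022uniqueness}, which the paper invokes in Section~4.2. There the moment polyhedron is pointed, so every cone of its normal fan is a face of an $n$-dimensional cone, and your covering claim does hold. But that proves a different statement, not the proposition as stated.

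\textbf{The missing ingredient.} What is needed is the input implicit in the paper's citation: assume, or prove, that $X$ is a normal separated algebraic variety on which the torus acts algebraically. Then Sumihiro's theorem and the cited corollary produce the fan.
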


\begin{proposition}[orbit-cone correspondence]\cite[Theorem 3.2.6]{cox2011graduate}
    The $k$-dimensional cones $\sigma \in \Sigma$ are in natural one-to-one correspondence with the $(n - k)$-dimensional orbits $O_\sigma$ of the $(\mathbb{C}^*)^n$-action on $X_\Sigma$. 
\end{proposition}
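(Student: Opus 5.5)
This statement is a standard result of toric geometry, cited from Cox--Little--Schenck; the plan is to reproduce the usual affine-chart argument. For each cone \(\sigma\in\Sigma\), let \(U_\sigma=\Spec\CC[\sigma^\vee\cap M]\) be the affine toric chart. Define the distinguished point \(\gamma_\sigma\in U_\sigma\) as the semigroup homomorphism \(\sigma^\vee\cap M\to\CC\) that sends \(m\mapsto1\) if \(m\in\sigma^\perp\) and \(m\mapsto0\) otherwise. Set
\[
O_\sigma=(\CC^*)^n\cdot\gamma_\sigma .
\]
The map \(\sigma\mapsto O_\sigma\) is the proposed correspondence.

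The first step is to identify \(O_\sigma\). Points of \(U_\sigma\) are semigroup homomorphisms \(\sigma^\vee\cap M\to(\CC,\cdot)\). The orbit \(O_\sigma\) consists exactly of the homomorphisms that are nonzero precisely on the face \(\sigma^\perp\cap\sigma^\vee\). Such homomorphisms correspond to group homomorphisms \(\sigma^\perp\cap M\to\CC^*\), so
\[
O_\sigma\cong\Hom(\sigma^\perp\cap M,\CC^*)\cong(\CC^*)^{n-\dim\sigma}.
\]
This uses that \(\sigma^\perp\cap M\) is a saturated sublattice of rank \(n-\dim\sigma\). Hence \(\dim O_\sigma=n-k\) when \(\dim\sigma=k\).

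The second step is to show that every orbit is some \(O_\sigma\) and that the assignment is injective. Take a point \(\gamma\in U_\sigma\). Its nonvanishing locus \(\{m:\gamma(m)\neq0\}\) is a face of \(\sigma^\vee\), since it is closed under addition and satisfies \(m+m'\in\text{locus}\Rightarrow m,m'\in\text{locus}\). Faces of \(\sigma^\vee\) are exactly \(\tau^\perp\cap\sigma^\vee\) for faces \(\tau\preceq\sigma\). Acting by the torus then moves \(\gamma\) to \(\gamma_\tau\), so \(U_\sigma=\bigsqcup_{\tau\preceq\sigma}O_\tau\). Gluing along \(U_{\sigma_1}\cap U_{\sigma_2}=U_{\sigma_1\cap\sigma_2}\), which holds by the fan axioms of Definition \ref{fann}, gives \(X_\Sigma=\bigsqcup_{\sigma\in\Sigma}O_\sigma\). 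Distinct cones give disjoint orbits because \(\gamma_\sigma\) and \(\gamma_\tau\) have different nonvanishing loci when viewed in a common chart \(U_{\sigma\cap\tau}\) or \(U_\sigma\).

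The main obstacle is the face-correspondence step: showing that the vanishing pattern of a semigroup homomorphism is a face of \(\sigma^\vee\) of the form \(\tau^\perp\cap\sigma^\vee\), and that \(\tau\mapsto\tau^\perp\cap\sigma^\vee\) is an inclusion-reversing bijection with the right dimensions. This is where I would carefully invoke convex duality, namely \((\sigma^\vee)^\vee=\sigma\) for strongly convex rational cones.
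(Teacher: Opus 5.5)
Your proposal is correct and is the standard Cox--Little--Schenck argument: distinguished points $\gamma_\sigma$, $O_\sigma\cong\Hom(\sigma^\perp\cap M,\CC^*)$, $U_\sigma=\bigsqcup_{\tau\preceq\sigma}O_\tau$, then gluing; the paper itself gives no proof beyond quoting Theorem 3.2.6 of that book, so you are reproducing exactly the argument it relies on. One small point: duality $(\sigma^\vee)^\vee=\sigma$ only gives the bijection $\tau\mapsto\tau^\perp\cap\sigma^\vee$, and it does not by itself show that the nonvanishing set $S\subseteq\sigma^\vee\cap M$ equals $F\cap M$ for a geometric face $F$; for that, take $m_1\in S$ in the relative interior of the smallest face $F\supseteq S$ (a suitable finite sum of elements of $S$ works), and for $m\in F\cap M$ apply the face property to $km_1=m+(km_1-m)$ with $k\gg0$, which gives $m\in S$.
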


In this paper, the toric Kähler Ricci shrinker is defined as follows.
\begin{definition}\cite[Definition 2.21]{cifarelli2024finite}
  A complex $n$-dimensional Kähler Ricci shrinker $(X,g,J,f)$ is \textit{toric} if $X$ is a toric manifold, $J\nabla f$ lies in the Lie algebra $\mathfrak{t}$ of the underlying real torus $T^n$ that acts on $X$, and $g$ is $T^n$-invariant. In particular, the zero set of $\nabla f$ is compact.
\end{definition}

\subsection{Polarized Fano fibration}
Here we recall the polarized Fano fibration
framework of \cite{sun2024k}.

A \textit{fibration} is a surjective projective morphism $\pi \colon X \to Y $
between normal varieties such that $\pi_*\mathcal{O}_X = \mathcal{O}_Y$. The following definition is introduced by \cite{collins2019sasaki} to study the canonical metric in Sasaki geometry.
\begin{definition}\cite{collins2019sasaki}
    A \textit{polarized affine cone} $(Y,\xi,T)$ consists of a normal affine variety $Y = \operatorname{Spec} R$ with a compact torus $T$-action admitting a unique fixed point, together with a vector $\xi \in \operatorname{Lie}(T)$ lying in the Reeb cone. Here the latter condition means that for the weight decomposition
\[
R = \bigoplus_{\alpha \in \operatorname{Lie}(T)^*} R_\alpha,
\]
one has $\langle \alpha, \xi \rangle > 0$ whenever $R_\alpha \neq 0$ and $\alpha \neq 0$.
\end{definition}
\begin{definition}[Polarized Fano fibration]
    A polarized Fano fibration $(\pi\colon X \to Y, \xi)$ is a fibration $\pi\colon X \to Y$ with the following properties:
\begin{enumerate}
    \item $\pi\colon X \to Y$ is a Fano fibration. That is, $X$ and $Y$ are normal varieties, $X$ is klt, and that $-K_X$ is $\pi$-ample and $\mathbb{Q}$-Cartier.
    \item $X$ and $Y$ are equipped with a $\pi$-equivariant torus action $T$, and $\xi \in \mathfrak{t} = \operatorname{Lie}(T)$.
    \item $(Y, T, \xi)$ is a polarized affine cone.
\end{enumerate}
\end{definition}
Sun-Zhang proved the following structural theorem via deep algebraic geometric techniques.
\begin{theorem}\cite[Theorem 3.1]{sun2024k}
    A K\"ahler-Ricci shrinker $(X,g,J,f)$ naturally defines a polarized Fano fibration. In particular, $X$ is quasi-projective.
\end{theorem}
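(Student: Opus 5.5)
The plan is to build both $Y$ and the relative polarization out of weighted $L^2$ holomorphic objects. The weight is $e^{-f}$, which is proper by the Cao--Zhou estimate, and the torus is $T=\overline{\{\exp(tJ\nabla f)\}}$, which is compact because $g$ is $T$-invariant and complete. I would proceed in three steps.

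\emph{Step 1: construct $Y$.} Let $R_X$ be the ring of holomorphic functions on $X$ that are $T$-finite, that is, whose $T$-translates span a finite-dimensional space. Such functions decompose into weight components $\phi\in R_{X,\beta}$. For each component, $\nabla f(\phi)=\alpha_\beta\phi$, and since $\phi$ is holomorphic this gives $-\Delta_f\phi=\alpha_\beta\phi$.
\begin{itemize}
\item \emph{Integrability and finiteness.} Quadratic growth of $f$, together with $|\nabla f|^2\le 2f$, shows that each $\phi$ has polynomial growth and hence lies in $L^2(e^{-f})$. Because $f$ is proper, $\Delta_f$ has discrete spectrum with finite-dimensional eigenspaces, so each $R_{X,\beta}$ is finite dimensional.
\item \emph{Positivity of weights.} Along the gradient flow of $-\nabla f$, which flows into the compact set $\{\nabla f=0\}$, a homogeneous $\phi$ would blow up if $\alpha_\beta<0$, and would be constant if $\alpha_\beta=0$. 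Hence $\xi$ lies in the Reeb cone.
\item \emph{Finite generation.} I would argue as in the Donaldson--Sun theory for tangent cones. Use H\"ormander's $L^2$ estimate with weight $e^{-f}$; this works because $\sqrt{-1}\partial\bar\partial f=\omega-\operatorname{Ric}$, so twisting by $K_X$ gives curvature exactly $\omega>0$. This produces enough $T$-finite functions to separate generic points. A Hilbert-series/degree bound from the weighted volume then shows that $R_X$ is a finitely generated normal domain.
\end{itemize}
Then $Y=\operatorname{Spec}R_X$ is a normal affine variety with a $T$-action. The unique fixed point is the ideal of positive weights, and $(Y,T,\xi)$ is a polarized affine cone.

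\emph{Step 2: construct the relative Proj.} Consider the graded ring
\[
\mathcal R=\bigoplus_{m\ge0}H^0_{T\text{-fin}}(X,-mK_X),
\]
made of $T$-finite sections that are $L^2$ with respect to the metric $e^{-mf}\omega^{-n}$. The curvature identity $\operatorname{Ric}(\omega)+\sqrt{-1}\partial\bar\partial f=\omega$ says precisely that this Hermitian metric on $-K_X$ has curvature $\omega$. H\"ormander's estimate then gives, for $m$ large, sections that separate points and tangent directions on sublevel sets $\{f\le C\}$. The estimates must be uniform in $C$ after accounting for weights, and $T$-equivariance is what makes this possible. Each graded piece is a finitely generated $R_X$-module, and $\mathcal R$ is finitely generated over $R_X$ by the same Hilbert-series argument as in Step 1. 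The natural map $X\to \operatorname{Proj}_{Y}\mathcal R$ is then an injective immersion. To see it is an isomorphism onto its image, I would use that fibers of $X\to Y$ are compact: a fiber lies in a $T_{\mathbb C}$-stable set on which all positive-weight functions are bounded, and properness of $f$ confines it. Normality plus Zariski's main theorem then upgrades the map to an isomorphism.

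\emph{Step 3: verify the conditions.} The map $\pi:X\to Y$ is projective and $-K_X$ is $\pi$-ample by construction. $X$ is smooth, hence klt. Finally, $\pi_*\mathcal O_X=\mathcal O_Y$ holds because every holomorphic function on $\pi^{-1}(U)$ decomposes into $T$-finite parts lying in $R_X$ localized, using normality of $Y$. Quasi-projectivity follows since $X$ is projective over an affine variety.

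The main obstacle is Step 2, specifically the compactness of fibers, equivalently properness of $\pi$, and the uniformity of the H\"ormander estimates needed to get an embedding rather than just generic separation. I would attack it first via the $T_{\mathbb C}$-flow of $-\nabla f$: it contracts $X$ onto a neighborhood of the compact zero set of $\nabla f$. This reduces global separation to a compact region, with the $\mathbb C^*$-equivariance transporting sections outward.
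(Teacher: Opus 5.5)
The paper gives no proof of this statement; it simply quotes Sun--Zhang. Your outline therefore has to stand on its own. It has the right ingredients: the weight $e^{-f}$ twisting $-K_X$ to curvature $\omega$, the $T$-weight decomposition, positivity of weights, and transport along the soliton $\mathbb{C}^*$-flow. But the two steps that carry the theorem, finite generation and properness, are not established, and the arguments you give for them do not work.

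\textbf{Finite generation.} A polynomial Hilbert-function bound, together with normality and generic separation of points, does not imply finite generation. Consider $S=\mathbb{C}+x\,\mathbb{C}[x,y]$, graded by total degree. It is a normal graded domain with $S_0=\mathbb{C}$, $\dim S_d=d$ and fraction field $\mathbb{C}(x,y)$, yet it is not finitely generated. Indeed, in any finitely generated subalgebra the coefficients of $x^1$ have bounded $y$-degree, while $xy^N\in S$ for every $N$. This $S$ is the ring of polynomials constant along the noncompact curve $\{x=0\}$, which is exactly the noncompact-fiber phenomenon you must exclude. What finite generation actually requires, as in the cone case, is a \emph{proper} equivariant map given by finitely many homogeneous functions into $\mathbb{C}^N$ with positive weights. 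Its image is then a closed analytic set invariant under a contracting $\mathbb{C}^*$-action, hence algebraic, and one normalizes. The same objection applies to your claim that $\mathcal R$ is finitely generated over $R_X$.

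\textbf{Properness.} Your sketch here is circular. For $y\neq o$ the fiber $\pi^{-1}(y)$ is not $T_{\mathbb C}$-stable, and on its $T_{\mathbb C}$-saturation any positive-weight function not vanishing at $y$ is unbounded. For $y=o$ all positive-weight functions vanish, which says nothing about $f$. What is really needed is that the common zero set $Z$ of the positive-weight homogeneous functions is compact. Granting that, properness is easy and uses $|\nabla f|^2\le 2f$ in the correct direction:
\begin{itemize}
\item choose finitely many homogeneous $\phi_i$, of weights $\alpha_i$, with no common zero on $\{f=C\}$;
\item then $\rho=\max_i|\phi_i|^{1/\alpha_i}\ge\delta>0$ on $\{f=C\}$;
\item $\rho$ scales by $e^{t}$ along the flow of $\nabla f$, while $f\le Ce^{2t}$;
\item hence $\rho\ge\delta\sqrt{f/C}$.
\end{itemize}
In Step~1 you used this same inequality backwards. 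It bounds flow time from below, so it gives no polynomial upper bound on $\phi$. The $L^2(e^{-f})$ bound instead follows by integrating $\operatorname{div}(|\phi|^2e^{-f}\nabla f)=(2\alpha+2n-2f)|\phi|^2e^{-f}$ over sublevel sets.

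Compactness of $Z$, however, means producing at every far-out point a positive-weight holomorphic function that does not vanish there, and your tools do not reach this. For functions the only positive weight available is $e^{-f}$, with curvature exactly $\omega$, because $\sqrt{-1}\partial\bar\partial f=\omega-\operatorname{Ric}$ need not be positive. So there is no large parameter to absorb the negative curvature of a logarithmic weight. Nor does rescaling help, since the soliton flow is not in general a homothety. You would need holomorphic charts of definite size at arbitrary points, which you do not have without curvature bounds.

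Transporting sections of $-kK_X$ from a compact core along the $\mathbb{C}^*$-flow yields at best an injective immersion into $\mathbb{P}^M$. It produces neither holomorphic functions nor closedness of the image, so it gives neither properness nor surjectivity of $X\to\operatorname{Proj}_Y\mathcal R$. Similarly, $\pi_*\mathcal O_X=\mathcal O_Y$ cannot be obtained by weight-decomposing functions on a non-invariant $\pi^{-1}(U)$. It follows from coherence of $\pi_*\mathcal O_X$ on the affine $Y$ only once properness is known. This compactness statement is the real content of the cited theorem, and the proposal does not supply it.
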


\subsection{Algebraic Preliminaries}

In this section, we presents some well-known algebraic results; all proofs are included to keep the exposition self-contained.
\begin{proposition}\label{A1}
    The one-dimensional polarized affine cone $(Y,\xi, T)$ is isomorphic to $\mathbb{A}^1$.
\end{proposition}
\begin{proof}
    Since $Y$ is a one-dimensional normal affine variety, it is necessarily a smooth affine curve. The non-trivial action of the complexified torus $T_{\mathbb{C}}$ on $Y$ guarantees the existence of a dense open orbit $O \subset Y$. Any such one-dimensional orbit is isomorphic to $T_{\mathbb{C}}$ modulo a finite cyclic group, which yields $O \cong \mathbb{C}^* / \mu_m \cong \mathbb{C}^*$. By the definition of a polarized affine cone, $Y$ contains a unique $T$-fixed point $o$. Therefore, $Y$ admits the orbit decomposition:$ Y = O \sqcup {o} $. Topologically and algebraically, $Y$ is a smooth affine curve obtained by adjoining a single point to $\mathbb{C}^*$. The unique smooth projective completion of $O \cong \mathbb{C}^*$ is $\mathbb{P}^1$. Since $Y$ is affine and strictly contains $O$, it must arise from removing exactly one point from $\mathbb{P}^1$. Consequently, we obtain the isomorphism:$ Y \cong \mathbb{P}^1 \setminus {\infty} \cong \mathbb{A}^1 $.
\end{proof}

\begin{proposition}\label{FFF}
    Any compact torus \(T\) acting holomorphically on \(\mathbb{P}^n\) has at least $n+1$ fixed points.
\end{proposition}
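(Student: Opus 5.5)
The plan is to reduce to linear algebra. Any holomorphic action of a compact torus $T$ on $\mathbb{P}^n$ acts by holomorphic automorphisms, so it factors through $\operatorname{Aut}(\mathbb{P}^n)=PGL(n+1,\mathbb{C})$. Since $T$ is compact and connected, its image in $PGL(n+1,\mathbb{C})$ is a compact torus. We would first lift it to $GL(n+1,\mathbb{C})$, or more precisely to $SL(n+1,\mathbb{C})$. The preimage of the image torus under $SL(n+1,\mathbb{C})\to PGL(n+1,\mathbb{C})$ is a compact group: it is a finite central extension, by $\mu_{n+1}$, of a compact torus. Its identity component is then a compact connected abelian Lie group, hence a torus $\tilde T$, and $\tilde T$ surjects onto the image of $T$. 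The fixed points of $T$ on $\mathbb{P}^n$ coincide with the fixed points of $\tilde T$, because projectively they act by the same transformations.

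Next, $\tilde T$ is a compact abelian subgroup of $GL(n+1,\mathbb{C})$. We would average a Hermitian inner product over $\tilde T$ to make it unitary, and then commuting unitary operators are simultaneously diagonalizable. So in suitable linear coordinates $[z_0:\dots:z_n]$ the action is
\[
t\cdot[z_0:\dots:z_n]=[\chi_0(t)z_0:\dots:\chi_n(t)z_n]
\]
for characters $\chi_i$ of $\tilde T$. The coordinate points $e_0,\dots,e_n$ are then fixed. Hence there are at least $n+1$ fixed points, and they are distinct.

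As a remark, the fixed locus is exactly the union of the projectivized joint eigenspaces $\mathbb{P}(V_\chi)$. If some character repeats, the fixed set is infinite, which only strengthens the claim.

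The main obstacle is the lifting step from $PGL$ to $GL$, because the projective action need not come from a linear action of $T$ itself. Passing to the identity component of the preimage in $SL(n+1,\mathbb{C})$, as above, handles this. An alternative route avoids lifting altogether: any element of $PGL(n+1,\mathbb{C})$ lying in a compact subgroup is semisimple. Commuting semisimple projective transformations from a connected abelian group have commuting lifts up to scalars, and the commutator scalar is trivial by connectedness. They can therefore be simultaneously diagonalized, which gives the same conclusion.
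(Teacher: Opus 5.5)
Your proof is correct, and it ends where the paper's does: diagonalize, then observe that the coordinate points $e_0,\dots,e_n$ are fixed. The diagonalization, however, is obtained by a genuinely different mechanism.

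\textbf{The paper's route.} The paper never leaves $PGL(n+1,\mathbb{C})$. It conjugates $K=\rho(T)$ into the maximal compact subgroup $PU(n+1)$, and then into the standard diagonal torus. This implicitly relies on the conjugacy theorem for compact subgroups and on the maximal torus theorem.

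\textbf{Your route.} You lift to $SL(n+1,\mathbb{C})$ and take the identity component of the preimage. You then finish with the averaging trick and simultaneous diagonalization of commuting unitary matrices. This is elementary linear algebra.

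\textbf{What your route buys.} It isolates exactly where connectedness is needed. On the identity component of the preimage, the commutator map $(a,b)\mapsto aba^{-1}b^{-1}$ is continuous and takes values in the discrete group $\mu_{n+1}$, so it is trivial. Hence that component is abelian, and therefore a torus. You give this argument only in your ``alternative'' paragraph. It belongs in the main line, because the full preimage in $SL$ of an abelian subgroup of $PGL$ need not be abelian.

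\textbf{Where the paper is imprecise.} This is the point the paper glosses over. Its sentence that every compact abelian subgroup of $PU(n+1)$ can be diagonalized is false as stated. The Klein four-group of half-turns in $PU(2)\cong SO(3)$, i.e.\ the image of the Pauli matrices, has no common fixed point on $\mathbb{P}^1$. The paper's argument survives only because $K$ is connected, hence a torus, hence contained in a maximal torus.

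So the paper's version is shorter because it cites Lie theory. Yours is self-contained and handles the projective-versus-linear lifting issue explicitly.
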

\begin{proof}
    Any compact torus holomorphic action induces a Lie group homomorphism
\[
\rho\colon T \to \operatorname{Aut}(\mathbb{P}^n) = \operatorname{PGL}(n+1,\mathbb{C}).
\]
We set $K = \rho(T)$; then $K$ is a compact connected abelian subgroup of $\operatorname{PGL}(n+1,\mathbb{C})$.
Recall that the maximal compact subgroup of $\operatorname{PGL}(n+1,\mathbb{C})$ is $\operatorname{PU}(n+1)$.

Any compact subgroup of $\operatorname{PGL}(n+1,\mathbb{C})$ can be conjugated into $\operatorname{PU}(n+1)$, i.e., there exists $g\in\operatorname{PGL}(n+1,\mathbb{C})$ such that $gKg^{-1}\subseteq\operatorname{PU}(n+1)$.
Moreover, every compact abelian subgroup of $\operatorname{PU}(n+1)$ can be diagonalized: there exists $h\in\operatorname{PU}(n+1)$ satisfying $h(gKg^{-1})h^{-1} \subseteq T_{\mathrm{std}}$,
where $T_{\mathrm{std}} \cong (S^1)^n$ denotes the standard diagonal torus.

Let $\Phi = hg$. Then $\Phi K \Phi^{-1} \subseteq T_{\mathrm{std}}$, and $\Phi$ defines a holomorphic automorphism mapping the fixed-point set of $T$ to the fixed-point set of $\Phi T \Phi^{-1}$.
Since $\Phi T \Phi^{-1} \subseteq T_{\mathrm{std}}$, we have
\[
\operatorname{Fix}(T_{\mathrm{std}}) \subseteq \operatorname{Fix}\big(\Phi T \Phi^{-1}\big).
\]

Note that each homogeneous coordinate point $e_i = [0:0:\cdots:1:\cdots:0]$
is a fixed point for weights $\lambda=(\lambda_1,\dots,\lambda_n)$ with $|\lambda_i|=1$ under the torus action:
\[
\lambda\cdot e_i = [\lambda_1\cdot 0,\ \dots,\ \lambda_i\cdot 1,\ \dots,\ \lambda_n\cdot 0] = e_i.
\]
Hence the action admits at least $(n+1)$ distinct fixed points.
\end{proof}

\section{Spectral Gaps and Gaussian Splittings}
\subsection{Weighted integrability and the spectral gap}

Let \((X,g,J,f)\) be a K\"ahler Ricci shrinker, and let \(T\) be the
compact real torus obtained as the closure of the one-parameter group
generated by \(J\nabla f\). We identify $\operatorname{Lie}(T)$ with the corresponding space of fundamental
vector fields on \(X\), and write $\xi:=J\nabla f\in\operatorname{Lie}(T)$.

Following Sun--Zhang, the pullback action of \(T\) on the ring \(R_X\)
of regular functions gives a weight decomposition
\[
    R_X=\bigoplus_{\beta\in \operatorname{Lie}(T)^*}R_{X,\beta},
\]
where
\[
    R_{X,\beta}
    :=
    \left\{
        \phi\in R_X:
        \mathcal L_{\eta}\phi
        =
        \sqrt{-1}\,\langle\beta,\eta\rangle\phi
        \quad\text{for all }\eta\in \operatorname{Lie}(T)
    \right\}.
\]
The weights that actually occur lie in the character lattice
 $M:=\Hom(T,S^1)\subset\operatorname{Lie}(T)^*$. In particular, \(R_{X,\beta}=0\) for \(\beta\notin M\). Moreover, every nonzero occurring weight satisfies $\langle\beta,\xi\rangle>0$. Let \(0\neq\phi\in R_{X,\beta}\) with \(\beta\neq0\), and set $\alpha_\beta:=\langle\beta,\xi\rangle>0$.
Then
\[
    \mathcal L_{\xi}\phi
    =
    \sqrt{-1}\,\alpha_\beta\phi.
\]
Since \(\phi\) is holomorphic and $\xi$ is a real vector field, we have
\[
    \mathcal L_{\xi}\phi
    =
    d\phi(J\nabla f)
    =
    \sqrt{-1}\,d\phi(\nabla f)
    =
    \sqrt{-1}\,\nabla f(\phi).
\]
Consequently,
\[
    \nabla f(\phi)=\alpha_\beta\phi.
\]
By the holomorphicity of $\phi$, we have
\begin{align*}
    -\Delta_f\phi
    =
    \alpha_\beta\phi,
\end{align*}
where $\Delta_f=\Delta-\nabla f$.

We will use the following weighted integrability lemma for homogeneous
regular functions. This is a special case of the stronger weighted
integrability result of Li--Zhang \cite[Lemma~4.1]{LiZhang26}; we state
it in the form adapted to our spectral argument.

\begin{lemma}[$L^2$ Integrability]\label{3.1}
    Let $(X^m, g, f)$ be a complete Ricci shrinker. Suppose $\phi \in C^\infty(X, \mathbb{C})$ satisfies $\nabla f(\phi) = \alpha \phi $ for any $\alpha > 0$.
Then
\[
\phi \in L^2(e^{-f}dV_g).
\]
\end{lemma}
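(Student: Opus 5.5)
The plan is to get the weighted $L^2$ bound from a divergence identity on the sublevel sets $\{f<r\}$, rather than from pointwise growth estimates along the flow of $\nabla f$. A pointwise approach runs into trouble: controlling $|\phi|$ by a power of $f$ would need a lower bound $|\nabla f|^2\ge cf$, i.e.\ an upper bound on $R$ in terms of $f$, which we do not want to assume. If $X$ is compact there is nothing to prove, so assume $X$ is noncompact. Then $f$ is proper by the quadratic growth theorem of Cao--Zhou recalled above, so each sublevel set $\{f<r\}$ is relatively compact. By Sard's theorem we may take $r$ to be a regular value, so that $\{f<r\}$ has smooth boundary $\{f=r\}$ with outward normal $\nabla f/|\nabla f|$.

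The key computation concerns the vector field $V=|\phi|^2e^{-f}\nabla f$. Since $\alpha$ is real, $\nabla f(\phi)=\alpha\phi$ gives $\nabla f(|\phi|^2)=2\operatorname{Re}\bigl(\bar\phi\,\nabla f(\phi)\bigr)=2\alpha|\phi|^2$. Using the soliton identities of Lemma~\ref{iden} (with $n$ denoting the constant there), we have $\Delta f-|\nabla f|^2=(n-R)-(2f-R)=n-2f$. Hence
\[
\operatorname{div}V=e^{-f}|\phi|^2\bigl(2\alpha+n-2f\bigr).
\]
The divergence theorem on $\{f<r\}$ gives a boundary term $\int_{\{f=r\}}|\phi|^2e^{-f}|\nabla f|\ge0$. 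Setting $K:=2\alpha+n$, this yields
\[
\int_{\{f<r\}}|\phi|^2e^{-f}(2f-K)\,dV_g\le0 .
\]

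Next, split the region according to the sign of $2f-K$. This gives
\[
\int_{\{K<2f<2r\}}|\phi|^2e^{-f}(2f-K)\le K\int_{\{2f\le K\}}|\phi|^2e^{-f}=:C_0 .
\]
The constant $C_0$ is finite because $\{2f\le K\}$ is compact (again by properness) and $\phi$ is smooth. In particular $C_0$ is independent of $r$. On the region $\{2f\ge K+2\}$ the weight satisfies $2f-K\ge2$, so $\int_{\{K+2\le 2f<2r\}}|\phi|^2e^{-f}\le C_0/2$ for all regular values $r$. Letting $r\to\infty$ and applying monotone convergence, we get $\int_X|\phi|^2e^{-f}\le C_0/2+\int_{\{2f<K+2\}}|\phi|^2e^{-f}<\infty$.

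The only delicate points are the boundary term and the properness of $f$. The boundary term has a favourable sign precisely because the normal derivative is $|\nabla f|\ge0$. Properness of $f$ is what makes the compact pieces finite. I expect no further obstacle. The sign of $\alpha$ plays no role beyond entering the constant $K$.
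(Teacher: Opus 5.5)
Your proof is correct, but it takes a different route from the paper, which gives no argument for this lemma and simply cites \cite[Lemma~4.1]{LiZhang26}, a stronger weighted-integrability result. Your proof is self-contained. It uses only three ingredients: the identity $\Delta f-|\nabla f|^2=n-2f$ (the scalar curvature cancels between the two soliton identities), properness of $f$, and the divergence theorem on regular sublevel sets, where the flux $\int_{\{f=r\}}|\phi|^2e^{-f}|\nabla f|$ has the right sign.

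One small point should be stated explicitly. The bound $K-2f\le K$ on $\{2f\le K\}$ uses $f\ge 0$, which holds because $2f=R+|\nabla f|^2$ and $R\ge0$. Alternatively, set $C_0:=\int_{\{2f\le K\}}(K-2f)|\phi|^2e^{-f}$, which is finite by compactness.

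As you observe, your route needs neither $\alpha>0$ nor any K\"ahler structure, and it gives more than $L^2$. The same inequality yields $\int_X f|\phi|^2e^{-f}<\infty$. Applying it to $V_k=f^k|\phi|^2e^{-f}\nabla f$ and using $|\nabla f|^2\le 2f$ gives $\operatorname{div}V_k\le e^{-f}f^k|\phi|^2(2\alpha+2k+n-2f)$, hence $\int_X f^{k}|\phi|^2e^{-f}<\infty$ for every integer $k\ge0$.

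The citation buys the paper brevity and whatever sharper form Li--Zhang establish. Your argument buys a transparent, elementary proof of exactly what Section~3 uses (Proposition~\ref{3.2} only needs $u\in L^2(e^{-f}dV)$), with no pointwise growth control on $\phi$.
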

\begin{proof}
  See \cite[Lemma~4.1]{LiZhang26}.
\end{proof}

$L^2$-integrability yields the following integral Bochner formula.

\begin{proposition}[Global Spectral Lower Bound]\label{3.2}
    Let \((X^m,g,f)\) be a complete Ricci shrinker. Suppose \(u\in C^\infty(X)\cap L^2(e^{-f}dV)\) be a nonconstant function satisfying
\[
-\Delta_f u = \alpha u.
\]
Then 
\[
\int_{X}|\nabla^{2}u|^{2}\,d\mu = (\alpha-1)\int_{X}|\nabla u|^{2}\,d\mu,
\]
which immediately yields \(\alpha \geq 1\). More precisely:
\begin{enumerate}
    \item If \(\operatorname{Ric} > 0\) everywhere on \(X\), then \(\alpha > 1\).
    \item If \(\alpha=1\) holds, then \(\nabla^2 u\equiv 0\) and the universal cover \(\widetilde{X}\) admits an isometric splitting \(\mathbb{R}\times N\), with \(N\) a complete Ricci shrinker.
\end{enumerate}
\end{proposition}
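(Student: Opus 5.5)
The argument is the weighted Bochner formula, integrated against $d\mu=e^{-f}dV$. The cutoffs are justified using the $L^2$ assumption and the properness of $f$.

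The pointwise identity is
\[
\tfrac12\Delta_f|\nabla u|^2=|\nabla^2u|^2+\langle\nabla u,\nabla\Delta_f u\rangle+(\operatorname{Ric}+\nabla^2 f)(\nabla u,\nabla u).
\]
Since $\operatorname{Ric}+\nabla^2f=g$ and $\Delta_f u=-\alpha u$, this becomes
\[
\tfrac12\Delta_f|\nabla u|^2=|\nabla^2u|^2+(1-\alpha)|\nabla u|^2 .
\]
Because $\Delta_f$ is symmetric with respect to $d\mu$, integrating over $X$ should give the stated identity, provided the boundary terms vanish.

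\textbf{Integrability (the main obstacle).} We do not know a priori that $\nabla u$ and $\nabla^2u$ lie in $L^2(d\mu)$. I would proceed in two steps with cutoffs $\chi_R=\chi(f/R)$. Since $|\nabla f|^2\le 2f$, we have $|\nabla\chi_R|\le C/\sqrt R$ on $\{R\le f\le 2R\}$.

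First, multiply $-\Delta_f u=\alpha u$ by $\chi_R^2u$ and integrate by parts:
\[
\int\chi_R^2|\nabla u|^2\,d\mu\le \alpha\int\chi_R^2u^2\,d\mu+2\int\chi_R|u||\nabla\chi_R||\nabla u|\,d\mu .
\]
Absorbing the last term by Cauchy--Schwarz gives $\int|\nabla u|^2\,d\mu\le C\int u^2\,d\mu<\infty$. The same computation, letting $R\to\infty$, shows $\int|\nabla u|^2\,d\mu=\alpha\int u^2\,d\mu$.

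Second, multiply the Bochner identity by $\chi_R^2$ and integrate. The left side, after one integration by parts, is bounded by
\[
\int\chi_R|\nabla\chi_R|\,|\nabla u|\,|\nabla^2u|\,d\mu .
\]
Absorbing again gives $\int\chi_R^2|\nabla^2u|^2\,d\mu\le C\int|\nabla u|^2\,d\mu$ uniformly in $R$. Hence $\nabla^2u\in L^2(d\mu)$. Letting $R\to\infty$, the cutoff error is $O\bigl(R^{-1/2}\|\nabla u\|\,\|\nabla^2u\|\bigr)\to0$, which yields the equality. One also has $\int|\nabla u|^2\,d\mu>0$ because $u$ is nonconstant, so $\alpha\ge1$.

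\textbf{Item 1.} Here I would use the commuted form of the identity. Integrating $\langle\nabla u,\nabla\Delta_f u\rangle$ by parts gives
\[
\int|\nabla^2u|^2\,d\mu+\int\operatorname{Ric}(\nabla u,\nabla u)\,d\mu=\int(\Delta_f u)^2\,d\mu-\int\nabla^2f(\nabla u,\nabla u)\,d\mu .
\]
Alternatively, and more simply: if $\alpha=1$, then $\nabla^2u\equiv0$ by the equality. The shrinker equation applied to the parallel field $\nabla u\neq0$ then gives $\operatorname{Ric}(\nabla u,\nabla u)=0$, since a parallel vector field is Ricci-flat in its direction. This contradicts $\operatorname{Ric}>0$, so $\alpha>1$.

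\textbf{Item 2.} If $\alpha=1$ then $\nabla^2u=0$, so $\nabla u$ is a nonzero parallel vector field. By the de Rham decomposition, the universal cover splits isometrically as $\widetilde X=\mathbb R\times N$, with $u$ affine in the $\mathbb R$ factor. The identity $\operatorname{Ric}(\nabla u,\cdot)=0$ shows $\nabla^2 f(\nabla u,\cdot)=\langle\nabla u,\cdot\rangle$. Hence $f=\tfrac14 t^2+\tilde f(y)$, up to a linear shift in $t$, and $\tilde f$ satisfies the shrinker equation on $N$. Completeness of $N$ follows from completeness of $\widetilde X$.
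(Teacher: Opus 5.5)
Your proposal is correct and follows essentially the same route as the paper: a cutoff integration by parts to get $\nabla u\in L^2(d\mu)$, the cutoff-integrated weighted Bochner identity to get $\nabla^2u\in L^2(d\mu)$ and to show the error term vanishes, then $\operatorname{Ric}(\nabla u,\nabla u)=0$ for the parallel field and the de Rham splitting. Using $\chi(f/R)$ instead of the paper's metric-ball cutoffs is only a cosmetic difference. One small slip: under the normalization $\operatorname{Ric}+\nabla^2 f=g$ one has $\partial_t^2 f=1$, so the splitting is $f=\tfrac12 t^2+at+\tilde f(y)$, not $\tfrac14 t^2$.
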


\begin{proof}
    Firstly, we prove $\nabla u \in L^{2}(d\mu)$. Let $d\mu = e^{-f}\, dV_g$. We introduce a cutoff function $\chi_{R}$ satisfying
\[
0 \leq \chi_{R}\leq 1, \quad \chi_{R} \equiv 1 \text{ on } B_{R},\quad \operatorname{supp}\chi_{R} \subset B_{2R},\quad |\nabla\chi_{R}| \leq \frac{C}{R}.
\]
Multiplying both sides of the equation $-\Delta_{f}u = \alpha u$ by $\chi_{R}^{2}u$ and integrating by parts, we compute
\begin{align*}
\int_{X}\chi_{R}^{2}|\nabla u|^{2}\,d\mu
&= \alpha \int_{X}\chi_{R}^{2}u^{2}\,d\mu - 2\int_{X}\chi_{R}u\langle\nabla\chi_{R},\nabla u\rangle\,d\mu \\
&\leq \alpha \int_{X}\chi_{R}^{2}u^{2}\,d\mu + 2\left|\int_{X}\chi_{R}u\langle\nabla\chi_{R},\nabla u\rangle\,d\mu\right| \\
&\leq \alpha \int_{X}\chi_{R}^{2}u^{2}\,d\mu + \frac12\int_{X}\chi_{R}^{2}|\nabla u|^{2}\,d\mu + 2\int_{X}u^{2}|\nabla\chi_{R}|^{2}\,d\mu.
\end{align*}
Rearranging terms gives
\[
\int_{X}\chi_{R}^{2}|\nabla u|^{2}\,d\mu \leq 2\alpha\int_{X}u^{2}\,d\mu + 4\int_{X}u^{2}|\nabla\chi_{R}|^{2}\,d\mu.
\]
Letting $R\to\infty$ and using $u\in L^{2}(d\mu)$, we conclude $\nabla u\in L^{2}(d\mu)$.

Secondly, we prove $\nabla^{2}u \in L^{2}(d\mu)$.
We use the following  Bakry–Émery Bochner formula:
\[
\frac12\Delta_{f}|\nabla u|^{2} = |\nabla^{2}u|^{2} + \operatorname{Ric}_{f}(\nabla u,\nabla u) + \langle\nabla\Delta_{f}u,\nabla u\rangle.
\]
Combined with the shrinker equation and $-\Delta_{f}u=\alpha u$, we simplify to
\[
\frac12\Delta_{f}|\nabla u|^{2} = |\nabla^{2}u|^{2} + (1-\alpha)|\nabla u|^{2}.
\]
Multiply both sides by $\chi_{R}^{2}$ and integrate by parts:
\[
-\frac12\int_{X}\langle\nabla\chi_{R}^{2},\nabla|\nabla u|^{2}\rangle\,d\mu
= \int_{X}\chi_{R}^{2}\left(|\nabla^{2}u|^{2} + (1-\alpha)|\nabla u|^{2}\right)d\mu.
\]
Denote the left-hand side by $E_{R}$. We estimate
\begin{align*}
|E_{R}|
&\leq \int_{X}2\chi_{R}|\nabla\chi_{R}|\cdot|\nabla^{2}u|\cdot|\nabla u|\,d\mu \\
&\leq \varepsilon\int_{X}\chi_{R}^{2}|\nabla^{2}u|^{2}\,d\mu + \frac{1}{\varepsilon}\int_{X}|\nabla\chi_{R}|^{2}|\nabla u|^{2}\,d\mu.
\end{align*}
If $\alpha \le 1$, then $(1-\alpha)|\nabla u|^{2} \ge 0$, so
\[
(1-\varepsilon)\int_{X}\chi_{R}^{2}|\nabla^{2}u|^{2}\,d\mu
\le \frac{1}{\varepsilon}\int_{X}|\nabla\chi_{R}|^{2}|\nabla u|^{2}\,d\mu.
\]

If $\alpha > 1$, we have
\[
(1-\varepsilon)\int_{X}\chi_{R}^{2}|\nabla^{2}u|^{2}\,d\mu
\le \frac{1}{\varepsilon}\int_{X}|\nabla\chi_{R}|^{2}|\nabla u|^{2}\,d\mu
+ (\alpha-1)\int_{X}|\nabla u|^{2}\,d\mu.
\]

Since $\nabla u\in L^{2}(d\mu)$ and $|\nabla\chi_{R}|\le \dfrac{C}{R}$, we obtain
\[
\int_{B_{R}(p)}|\nabla^{2}u|^{2}\,d\mu
\le \int_{X}\chi_{R}^{2}|\nabla^{2}u|^{2}\,d\mu \le C.
\]

Letting $R\to\infty$ and applying the monotone convergence theorem, we get $\nabla^{2}u\in L^{2}(d\mu)$.

Next we prove $E_{R}\to 0$. By the Cauchy--Schwarz inequality,
\begin{align*}
|E_{R}|
&\le \int_{X}2\chi_{R}|\nabla\chi_{R}|\,|\nabla^{2}u|\,|\nabla u|\,d\mu \\
&\le 2\left( \int_{B_{2R}(p)\setminus B_{R}(p)} |\nabla^{2}u|^{2}\,d\mu \right)^{\frac12}
\left( \int_{X} |\nabla\chi_{R}|^{2}|\nabla u|^{2}\,d\mu \right)^{\frac12}.
\end{align*}

The first term on the right-hand side tends to zero because $\nabla^{2}u\in L^{2}(d\mu)$ and the integral domain is an annulus going to infinity.
The second term also tends to zero, since $|\nabla\chi_{R}|\le \dfrac{C}{R}$ and $\nabla u\in L^{2}(d\mu)$.
Hence $E_{R}\to 0$, and we have the equality
\[
\int_{X}\chi_{R}^{2}\left(|\nabla^{2}u|^{2}+(1-\alpha)|\nabla u|^{2}\right)d\mu = E_{R}.
\]

Letting $R\to\infty$, and using $\nabla u,\nabla^{2}u\in L^{2}(d\mu)$, we arrive at the global integral identity:
\[
\int_{X}|\nabla^{2}u|^{2}\,d\mu = (\alpha-1)\int_{X}|\nabla u|^{2}\,d\mu.
\]

If $u$ is nonconstant, then $\alpha\ge 1$. 

If $\operatorname{Ric}>0$ and $\alpha=1$, we get $\nabla^{2}u\equiv 0$.
From the standard Bochner formula we have $\operatorname{Ric}(\nabla u,\nabla u)= 0$.
This leads to a contradiction with $\operatorname{Ric}>0$, so we must have $\alpha>1$.

If $\alpha=1$, then $\nabla^2 u\equiv 0$. Let $(\widetilde{X},\widetilde{g},\widetilde{f})$ denote the universal cover shrinker satisfying $\operatorname{Ric}_{\tilde{g}} + \nabla^2_{\tilde{g}} \tilde{f} = \tilde{g}$, with $\widetilde{g}=\pi^*g$, $\widetilde{f}=\pi^*f$ the pullbacks under the covering map $\pi$. By the de Rham decomposition theorem \cite[Theorem 10.43]{besse2007einstein}, $\widetilde{X}$ isometrically splits to $\mathbb{R}\times N$. The flat $\mathbb{R}$-factor yields
\[
\tilde{f}(s,y) = \frac{1}{2} s^2 + a s + f_N(y).
\]
Translating the $s$-coordinate removes the linear term $as$, so $\tilde{f}(s,y) = \frac12 s^2 + f_N(y)$. Restricting to the factor $N$, we obtain
\[
\operatorname{Ric}_{g_N} + \nabla^2_{g_N} f_N = g_N.
\]
\end{proof}
At the end of this section, we prove a technical proposition which we shall invoke frequently in all subsequent arguments.
\begin{proposition}\label{3.4}
    Let $(X,g,f)$ be a complete Ricci shrinker,
Let $q\in X$ be a critical point of $f$, and suppose that
$\phi\in C^\infty(X,\mathbb R)$ satisfies $\nabla f(\phi)=\alpha\phi$ for some $\alpha\in\mathbb R$. If $d\phi(q)\neq0$, then
\[
\bigl(\operatorname{Ric}_q^\sharp\bigr)^*d\phi(q)
=(1-\alpha)d\phi(q).
\]
In particular, $1-\alpha$ is an eigenvalue of the Ricci
endomorphism at $q$. Consequently, if $\operatorname{Ric}_q\geq0\, (\operatorname{Ric}_q>0)$, then $\alpha\leq 1 \,(\alpha<1)$.
\end{proposition}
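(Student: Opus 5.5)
Differentiate the eigen-relation $\nabla f(\phi)=\alpha\phi$ once and evaluate at the critical point $q$, where the terms containing $\nabla f$ disappear. Write the relation as $\langle \nabla f,\nabla\phi\rangle=\alpha\phi$ and take the covariant derivative in a direction $v\in T_qX$:
\[
\nabla^2 f(v,\nabla\phi)+\nabla^2\phi(\nabla f,v)=\alpha\, d\phi(v).
\]
At $q$ we have $\nabla f(q)=0$, so the second term vanishes. This gives $\nabla^2 f_q(v,\nabla\phi(q))=\alpha\,d\phi(q)(v)$ for all $v$. In other words, $\nabla\phi(q)$ is an eigenvector of the Hessian endomorphism $(\nabla^2 f)_q^\sharp$ with eigenvalue $\alpha$; symmetry of the Hessian justifies moving $\nabla\phi(q)$ into either slot.

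Next we use the shrinker equation $\operatorname{Ric}+\nabla^2 f=g$ from Definition~\ref{soliton}. It gives $\operatorname{Ric}_q(v,\nabla\phi(q))=\langle v,\nabla\phi(q)\rangle-\alpha\,d\phi(q)(v)=(1-\alpha)\,d\phi(q)(v)$. Hence $\operatorname{Ric}_q^\sharp\nabla\phi(q)=(1-\alpha)\nabla\phi(q)$. Since $\operatorname{Ric}^\sharp$ is $g$-self-adjoint, dualizing via $g$ gives $(\operatorname{Ric}_q^\sharp)^*d\phi(q)=(1-\alpha)d\phi(q)$. Because $d\phi(q)\neq 0$, this shows that $1-\alpha$ is genuinely an eigenvalue of the Ricci endomorphism at $q$.

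The consequences follow at once. If $\operatorname{Ric}_q\ge 0$, every eigenvalue is $\ge 0$, so $1-\alpha\ge0$, that is, $\alpha\le1$. If $\operatorname{Ric}_q>0$, then $1-\alpha>0$, that is, $\alpha<1$. Equivalently, one can evaluate $\operatorname{Ric}_q(\nabla\phi,\nabla\phi)=(1-\alpha)|\nabla\phi(q)|^2$ and divide by $|\nabla\phi(q)|^2>0$.

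There is no real obstacle here. The only points needing care are the bookkeeping of the adjoint (the endomorphism acting on vectors versus the dual endomorphism on covectors) and the vanishing of the $\nabla^2\phi(\nabla f,\cdot)$ term at $q$. The statement is phrased for real $\phi$. In later applications, where $\phi$ is a complex-valued holomorphic function, I would apply the result to $\operatorname{Re}\phi$ or $\operatorname{Im}\phi$. Both satisfy the same relation because $\alpha$ and $\nabla f$ are real, and at least one of them has nonzero differential at $q$.
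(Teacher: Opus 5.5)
Your proof is correct and is essentially the paper's argument. The paper differentiates $Z(\phi)=\alpha\phi$ with $Z=\nabla f$ at $q$ to get $(dZ_q)^*d\phi(q)=\alpha\,d\phi(q)$ and then substitutes $dZ_q=(\nabla^2 f)_q^\sharp=I-\operatorname{Ric}_q^\sharp$; this is exactly your computation, phrased on the covector $d\phi(q)$ rather than on the vector $\nabla\phi(q)$, and your dualization via self-adjointness and your remark about passing to $\operatorname{Re}\phi$ or $\operatorname{Im}\phi$ are both fine.
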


\begin{proof}
We set $Z=\nabla f$, since $q$ is a critical point of $f$, one has $Z(q)=0$. The
linearization
\[
dZ_q:T_qX\longrightarrow T_qX
\]
is therefore canonically defined and is given by $dZ_q(v)=(\nabla_vZ)_q$. 

Differentiating the identity $Z(\phi)=\alpha\phi$ at $q$, we obtain, for every $v\in T_qX$,
\begin{align*}
\alpha\,d\phi_q(v) =d\bigl(Z(\phi)\bigr)_q(v) =\nabla^2\phi_q\bigl(v,Z(q)\bigr)
+d\phi_q\bigl((\nabla_vZ)_q\bigr) =d\phi_q(dZ_q(v)).
\end{align*}
Hence
\[
(dZ_q)^*d\phi(q)=\alpha\,d\phi(q).
\]

Since $Z=\nabla f$, the shrinker equation gives
\[
dZ_q
=\bigl(\nabla^2f\bigr)_q^\sharp
=I-\operatorname{Ric}_q^\sharp.
\]
It follows that
\[
\bigl(\operatorname{Ric}_q^\sharp\bigr)^*d\phi(q)
=(1-\alpha)d\phi(q).
\]

Because $d\phi(q)\neq0$, the number $1-\alpha$ is an eigenvalue
of $\bigl(\operatorname{Ric}_q^\sharp\bigr)^*$. Since
$\operatorname{Ric}_q^\sharp$ is self-adjoint, it has the same
eigenvalues as its dual. Thus $1-\alpha$ is a Ricci eigenvalue
at $q$.

If $\operatorname{Ric}_q\geq0$, then $1-\alpha\geq0$, so
$\alpha\leq1$. If $\operatorname{Ric}_q>0$, then
$1-\alpha>0$, and hence $\alpha<1$.
\end{proof}
\subsection{The equality case and Gaussian splitting}
For convenience of exposition and to characterize the splitting behavior of Kähler Ricci shrinkers, we introduce the following notation.

Let $E_1 := \{\phi \in R_X : \nabla f(\phi) = \phi\}$ denote the weight-$1$ subspace, and set $D(E_1) := \operatorname{Span}_{\mathbb{C}}\{d\phi : \phi \in E_1\}$. 
\begin{align*}
    D\colon E_1 \to H^0(X,T^{1,0*}X), \qquad \phi \mapsto d\phi = \partial\phi
\end{align*}
By definition of $E_1$, $\ker D = 0$, so $E_1 \cong D(E_1)$ as complex vector spaces.

For any \(\phi\in R_X\), the \(T\)-weight decomposition gives \(\phi=\sum_{\beta}\phi_{\beta}\) with \(\phi_{\beta}\in R_{X,\beta}\). Substitute into \(Z(\phi)=\phi\):
\begin{align*}
    \sum_{\beta}\alpha_{\beta}\phi_{\beta} = \sum_{\beta}\phi_{\beta},
\end{align*}
which forces \(\alpha_{\beta}=1\) for all \(\beta\) appearing in the sum. Hence
\begin{align*}
    E_1 = \bigoplus_{\alpha_{\beta}=1} R_{X,\beta}
    = \bigoplus_{\langle\beta,\xi\rangle=1} R_{X,\beta}.
\end{align*}

Combining the simple connectedness of Kähler Ricci shrinkers \cite[Proposition 3.10]{sun2024k}\cite{esparza2025shrinking} and the properties of homogeneous regular functions, we establish the following theorem.

\begin{theorem}\label{3.3}
 Let $(X,g,J,f)$ be a complete K\"ahler-Ricci shrinker.
Then for every nonconstant homogeneous regualr function  $\phi\in R_{X,\beta}$, we have $\alpha_{\beta} \ge 1$. More precisely:
\begin{enumerate}
    \item If $\operatorname{Ric} > 0$ everywhere on $X$, then $\alpha_{\beta} > 1$.
    \item If $\alpha_{\beta} = 1$, then $X$ is holomorphically isometric to $\mathbb{C} \times N$, where $N$ itself is a complete K\"ahler-Ricci shrinker.
    \item If $\dim_{\mathbb{C}}E_1=k$, then $X$ is holomorphically isometric to $\mathbb{C}^k \times N$, where $N$ itself is a complete K\"ahler-Ricci shrinker.
\end{enumerate}
\end{theorem}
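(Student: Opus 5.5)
The strategy is to reduce everything to Proposition~\ref{3.2}, applied to the real and imaginary parts of a homogeneous regular function. Then we use the K\"ahler structure and simple connectedness to upgrade a real line splitting to a holomorphic splitting off of $\mathbb{C}$.

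\textbf{Spectral bound and item 1.} Let $0\neq\phi\in R_{X,\beta}$ be nonconstant. Since nonconstant homogeneous functions cannot have weight zero, $\beta\neq0$ and $\alpha_\beta>0$. We showed above that $\nabla f(\phi)=\alpha_\beta\phi$ and $-\Delta_f\phi=\alpha_\beta\phi$. Since $\nabla f$ is a real vector field and $\Delta_f$ is a real operator, $u=\operatorname{Re}\phi$ and $v=\operatorname{Im}\phi$ satisfy the same equations. By Lemma~\ref{3.1}, both lie in $L^2(e^{-f}dV)$. At least one of them is nonconstant; in fact both are, because $\phi$ is holomorphic and nonconstant, so $\operatorname{Re}\phi$ is not constant. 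Proposition~\ref{3.2} applied to $u$ then gives $\alpha_\beta\ge1$. Part (1) of the same proposition gives $\alpha_\beta>1$ when $\operatorname{Ric}>0$.

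\textbf{Item 2.} If $\alpha_\beta=1$, Proposition~\ref{3.2} gives $\nabla^2u\equiv0$, and likewise $\nabla^2v\equiv0$. Since $\phi$ is holomorphic, $\nabla v=J\nabla u$. Hence $\nabla u$ and $J\nabla u$ are parallel, nonzero, and orthogonal, and $|\nabla u|$ is a nonzero constant. The real span of $\nabla u$ and $J\nabla u$ is a parallel $J$-invariant rank-two distribution, and its orthogonal complement is parallel and $J$-invariant as well. By simple connectedness of K\"ahler--Ricci shrinkers \cite[Proposition 3.10]{sun2024k}, the de Rham decomposition applies directly to $X$ rather than only to its universal cover. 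It gives an isometric splitting $X=\mathbb{R}^2\times N$ that respects $J$, so $X\cong\mathbb{C}\times N$ holomorphically and isometrically, with $\phi$ (suitably normalized) a linear coordinate $z$ on the $\mathbb{C}$ factor.

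We still need to show that $N$ is a K\"ahler--Ricci shrinker. The Hessian of $f$ along the flat factor equals $g$, so we write $f=\tfrac12|z|^2+\operatorname{Re}(\bar a z)+f_N$ and remove the linear term by translation. Then $f_N$ satisfies the shrinker equation on $N$. The vector field $\nabla f_N$ is real holomorphic on $N$, because $\nabla f$ is real holomorphic and $z\partial_z+\bar z\partial_{\bar z}$ is. Finally, $N$ is complete and K\"ahler as a totally geodesic complex factor.

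\textbf{Item 3.} We proceed by induction on $k=\dim_{\mathbb{C}}E_1$. Choose a basis $\phi_1,\dots,\phi_k$ of $E_1$. Each $\phi_i$ has $\nabla^2\phi_i\equiv0$, so $d\phi_i$ is a parallel $(1,0)$-form. Because $\ker D=0$, the forms $d\phi_1,\dots,d\phi_k$ are linearly independent over $\mathbb{C}$. Parallel forms that are independent at one point are independent everywhere, so they span a parallel complex rank-$k$ distribution. Applying the argument of item 2 to this whole distribution gives $X\cong\mathbb{C}^k\times N$, with $(\phi_1,\dots,\phi_k)$ affine coordinates on the $\mathbb{C}^k$ factor. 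After subtracting constants and a linear change of basis, the potential becomes $f=\tfrac12\sum|z_i|^2+f_N$.

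The main obstacle is this splitting step. We must check that the decomposition respects $J$ and is global, which is where simple connectedness enters. We must also check that $N$ inherits the structure of a complete K\"ahler--Ricci shrinker, in particular that $\nabla f_N$ is holomorphic. Both checks are fairly routine.
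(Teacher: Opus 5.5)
Your proposal is correct and follows essentially the same route as the paper: you apply Lemma~\ref{3.1} and Proposition~\ref{3.2} to $u=\operatorname{Re}\phi$ to get the bound and item~1, and for items~2 and~3 you use the parallel fields $\nabla u$ and $J\nabla u$, the simple connectedness from \cite{sun2024k}, and the de Rham decomposition, with the same splitting of the potential. One cosmetic point about item~3: what you actually need is that a parallel form vanishing at one point vanishes identically, so the globally independent $d\phi_1,\dots,d\phi_k$ are pointwise independent everywhere; and no induction on $k$ is needed, since you split off the whole rank-$2k$ distribution at once.
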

\begin{proof}
    \begin{enumerate}
        \item Since $\phi$ is a homogeneous regular function and non-constant, its real and imaginary parts cannot both be constant. Without loss of generality, we suppose $u=\operatorname{Re} \phi$. By Lemma \ref{3.1}, $u\in L^2(e^{-f}dV)$, and Proposition \ref{3.2} directly forces $\alpha_{\beta}\geq 1$, if $\operatorname{Ric}>0$ on $X$, then $\alpha_{\beta}>1$.
        \item If $\alpha_{\beta}=1$, Proposition \ref{3.2} yields $\nabla^2 u \equiv 0$, meaning $\nabla u$ is a parallel vector field. By the K\"ahler condition, $J\nabla u$ is likewise parallel. Combined with the simple connectedness of complete K\"ahler-Ricci shrinkers and the de Rham decomposition theorem\cite[Theorem 10.43]{besse2007einstein}, we conclude that $X$ is holomorphically isometric to $\mathbb{C} \times N$. The $\mathbb{C}$-factor is Ricci-flat, and the shrinker equation degenerates to $\sqrt{-1}\partial\bar\partial f_{\mathbb{C}} = \omega_{\mathbb{C}}$. This gives the splitting $f(z,w) = {|z|^2}/{2} + f_{N}(w)$, which implies that $N$ itself is a complete K\"ahler-Ricci shrinker.
        \item  As established previously, every $\phi_{\beta}$ with $\alpha_{\beta}=1$ admits a parallel vector field $\nabla u_{\beta}$, with $u_{\beta}=\operatorname{Re}\phi_{\beta}$. Due to $\dim_{\mathbb{C}}E_1=k$, we pick \(\{\phi_1,\dots,\phi_k\}\subset E_1\) so that \(\{d\phi_1,\dots,d\phi_k\}\) is a basis of \(D(E_1)\). From the parallelism of the vector field, we obtain that the parallel vector fields \(\{\nabla u_1,\dots,\nabla  u_k\}\) are globally linearly independent, spanning a parallel distribution \(\mathcal{D}\). The Kähler condition ensures \(J\mathcal{D}\) is parallel, Since the functions \(\phi_1,\ldots,\phi_k\) are holomorphic and \(d\phi_1,\ldots,d\phi_k\) are complex-linearly independent, theirreal and imaginary parts have real-linearly independent differentials. Moreover, $\nabla\operatorname{Im}\phi_i=J\nabla\operatorname{Re}\phi_i$. Hence these parallel vector fields span a real \(2k\)-dimensional \(J\)-invariant parallel distribution. so the de Rham decomposition theorem yields a holomorphic isometric decomposition \(X\cong\mathbb{C}^k\times N\), where \(N\) is a complete Kähler Ricci shrinker from the preceding discussion.
        
    \end{enumerate}
   
\end{proof}

\section{The first-order visibility of Kähler Ricci shrinker}
\subsection{Kähler Ricci shrinker surfaces are first order visible}
In this section, we will prove the following theorem and discuss its geometric applications.
\begin{theorem}\label{4.2}
    Any complete noncompact Kähler Ricci shrinker surfaces are first-order visible.
\end{theorem}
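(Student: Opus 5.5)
The plan is to use the polarized Fano fibration $\pi:X\to Y=\operatorname{Spec}R_X$ of \cite[Theorem 3.1]{sun2024k} and to split into cases according to $\dim Y$. Two preliminary observations come first. Since $\xi=J\nabla f$ generates $T$, the zero set of $\nabla f$ is exactly the $T$-fixed locus. Since $\pi$ is equivariant and $o$ is the unique $T$-fixed point of $Y$ (the Reeb condition), every zero of $\nabla f$ lies in the compact fiber $\pi^{-1}(o)$. Because $X$ is noncompact and $\pi$ is projective, $Y$ is not a point, so $\dim Y\in\{1,2\}$. In each case I will exhibit a $T$-fixed point $q\in\pi^{-1}(o)$ and a homogeneous $\phi\in R_X$, pulled back from $Y$, with $d\phi(q)\neq0$.

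\emph{Case $\dim Y=1$.} By Proposition~\ref{A1}, $Y\cong\mathbb A^1$ with a homogeneous coordinate $t$. The general fiber is a smooth Fano curve, hence $\mathbb P^1$. Write the central fiber as $F=\pi^*(0)=\sum m_iC_i$; then $F^2=0$ and $K_X\cdot F=-2$.

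If $F$ is irreducible, then $F=mC$ with $C^2=0$. Adjunction together with $-K_X\cdot C>0$ forces $m=1$ and $C\cong\mathbb P^1$ smooth, so $dt\neq0$ along all of $C$. The $T$-action on the compact curve $C$ has a fixed point (Proposition~\ref{FFF}, or trivially if $T$ acts trivially on $C$); take $q$ to be such a point.

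If $F$ is reducible, each component satisfies $C_i^2<0$ by Zariski's lemma and $K_X\cdot C_i<0$ by $\pi$-ampleness, so each $C_i$ is a $(-1)$-curve with $K_X\cdot C_i=-1$. Then $\sum m_i=2$, and $F^2=0$ rules out $2C$, so $F=C_1+C_2$, reduced, with the two curves meeting transversally in one point. Hence $dt\neq0$ on $C_1\setminus C_2$. The $T$-action on $C_1\cong\mathbb P^1$ fixes the node $C_1\cap C_2$. So either $T$ acts trivially on $C_1$ or it has a second fixed point, which lies in $C_1\setminus C_2$. Either way there is a fixed $q\in C_1\setminus C_2$, and $\phi=t$ works.

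\emph{Case $\dim Y=2$.} Here $\pi$ is birational.
\begin{itemize}
\item If $\pi^{-1}(o)$ is a point, then $X\cong Y$ near $o$, so $Y$ is smooth at $o$. Decompose lifts of a basis of $\mathfrak m_o/\mathfrak m_o^2$ into $T$-weight components; some homogeneous component $\phi$ has $d\phi(o)\neq0$, and we take $q=o$.
\item Otherwise $E=\pi^{-1}(o)$ is a connected curve with negative definite intersection matrix, and $\pi$-ampleness gives $K_X\cdot E_i<0$. So each $E_i$ is a $(-1)$-curve. Two distinct $(-1)$-curves meeting would give a non–negative-definite $2\times2$ block, so $E$ is a single $(-1)$-curve. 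By Castelnuovo, $Y$ is smooth at $o$ and $\pi$ is the blow-up of $o$.
\end{itemize}
In the blow-up subcase, linearize: choose homogeneous $x,y\in R_Y$ whose differentials form a weight basis of $T_o^*Y$. Then $E\cong\mathbb P(T_oY)$, and the points $[1:0]$ and $[0:1]$ are $T$-fixed. In the chart $(x,s)$ with $y=xs$, the point $q=[1:0]$ is $\{x=0,s=0\}$ and $d(\pi^*x)(q)=dx\neq0$. So $\phi=\pi^*x$ works.

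\emph{Main obstacle.} The main work is the algebraic classification of $\pi^{-1}(o)$: ruling out multiple or non-reduced fiber components, and, in the birational case, reducing to a single $(-1)$-curve so that some regular function vanishes only to first order at a fixed point. The steps to check carefully are the use of $\pi$-ampleness of $-K_X$ together with negativity or Zariski's lemma. Once a reduced component with a fixed point away from other components (or away from the strict transform) is found, the first-order nonvanishing is immediate.
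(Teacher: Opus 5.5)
Your proposal is correct and follows essentially the same route as the paper. You split on $\dim Y\in\{1,2\}$ and use adjunction with $\pi$-ampleness of $-K_X$ to pin down the central fiber: a reduced smooth $\mathbb P^1$ or a transverse pair of $(-1)$-curves when $\dim Y=1$, and a point or a single $(-1)$-curve blown down to a smooth point when $\dim Y=2$. You then pull back a homogeneous function that vanishes to first order at a $T$-fixed point on the reduced smooth part. The differences from the paper are cosmetic: you use Zariski's lemma and a pairwise $2\times2$ negativity check, where the paper enumerates $\sum m_i d_i=2$ with a parity argument and bounds $E^2\ge r-2$. You should also say explicitly, as the paper does, that each $C_i$ is $T$-invariant because $T$ is connected, before asserting that the node is $T$-fixed.
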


We first fix the basic setup. Let $(X,g,J,f)$ be a smooth Kähler Ricci shrinker surface, and let $\pi\colon X\to Y$ denote its associated polarized Fano fibration, where $Y$ is a normal affine variety. Let $T$ be the compact torus defined as the closure of the flow generated by $J\nabla f$, the fibration $\pi$ is $T$-equivariant and there exists unique $T$-fixed point on $Y$.
From an algebro-geometric perspective, $X$ is quasi-projective and $Y$ remains a normal affine variety. Throughout this section, $\mathcal{O}_X$ and $\mathcal{O}_Y$ stand for the algebraic structure sheaves of $X$ and $Y$, respectively. We define the corresponding coordinate rings by
\[
R_X:=\Gamma(X,\mathcal O_X),\qquad R_Y:=\Gamma(Y,\mathcal O_Y).
\]
The canonical isomorphism $\pi_*\mathcal O_X=\mathcal O_Y$ implies that the pullback morphism induces a ring isomorphism
\[
\pi^*\colon R_Y\xrightarrow{\sim} R_X.
\]
Consequently, both rings admit compatible weight decompositions with respect to the $T$-action:
\[
R_X=\bigoplus_{\beta\in \operatorname{Lie}(T)^*}R_{X,\beta},
\qquad
R_Y=\bigoplus_{\beta\in \operatorname{Lie}(T)^*}R_{Y,\beta}.
\]
Under this grading, all constant functions carry zero weight, whereas every nonconstant homogeneous regular function carries positive weight.

\subsubsection{The case of $\operatorname{dim}_{\mathbb{C}}Y = 1$}
\begin{proof}
    Firstly, by proposition \ref{A1}, we have $Y\cong \mathbb{A}^1$.
    Let $t\colon Y\to \mathbb{C}$ be the homogeneous coordinate on $Y$, Since $t$ is a global regular function on $Y$, its pullback $\phi=\pi^*t$ is a global regular function on $X$. Moreover, since $t$ is homogeneous and $\pi$ is $T$-equivariant, $\phi$ is again homogeneous. Thus $\phi$ is a globally defined homogeneous regular function on $X$.

    We first identify the general fiber. By generic smoothness \cite[Corollary 10.7]{hartshorne2013algebraic}, there exists a nonempty Zariski open subset $U \subset Y$ such that $\pi^{-1}(U) \to U$ is smooth. Since \(\pi\) is projective and
\(\pi_*\mathcal O_X=\mathcal O_Y\), its fibers are projective and
connected. Consequently, for every closed point \(y\in U\), the fiber $F_y:=\pi^{-1}(y)$
is a smooth, connected projective curve.
    
    \begin{claim}
         For every closed point \(y\in U\), the fiber \(F_y\) is isomorphic to $\mathbb P^1$.
    \end{claim}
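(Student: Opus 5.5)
The plan is to use the Fano condition of the polarized Fano fibration: $-K_X$ is $\pi$-ample. I will restrict this to a smooth fiber and then apply adjunction. This pins down the genus of a general fiber, and the classification of smooth projective curves does the rest.

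Fix a closed point $y\in U$ and write $F=F_y$. Since $\pi^{-1}(U)\to U$ is smooth of relative dimension one, $F$ is a smooth connected projective curve. It is the fiber over a point of the smooth curve $U$, so its normal bundle is trivial: $N_{F/X}\cong \pi^*T_yY|_F\cong\mathcal O_F$. Adjunction then gives
\[
K_F=(K_X+F)|_F\cong K_X|_F\otimes N_{F/X}\cong K_X|_F .
\]
Because $-K_X$ is $\pi$-ample, its restriction to every fiber is ample. So $-K_F$ is an ample line bundle on $F$, and $\deg K_F=2g(F)-2<0$. This forces $g(F)=0$, and since $F$ is smooth and connected, $F\cong\mathbb P^1$.

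A few routine points need checking along the way. First, since $X$ is smooth, $K_X$ is Cartier and adjunction holds on the nose; no $\mathbb Q$-Cartier subtleties arise. Second, $F$ is reduced and the normal-bundle identification is legitimate over the smooth locus $U$. Smoothness of $\pi$ at points of $F$ makes $d\pi$ surjective there, so scheme-theoretically $F=\pi^*(y)$ is a reduced Cartier divisor with trivial normal bundle. Third, connectedness of $F$ comes from $\pi_*\mathcal O_X=\mathcal O_Y$ via Zariski's main theorem / Stein factorization, as the paper already notes.

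I expect the main (mild) obstacle to be justifying $N_{F/X}\cong\mathcal O_F$ cleanly. I will handle it by using the local coordinate $t-t(y)$ on $U\cong$ an open subset of $\mathbb A^1$. Its pullback $\phi-t(y)$ is a defining function of $F$ whose differential is nowhere zero along $F$, because $\pi$ is a submersion there. So $d\phi$ trivializes the conormal bundle of $F$. An alternative route avoids adjunction altogether: the relative cotangent sequence $0\to\pi^*\Omega_U\to\Omega_X\to\Omega_{X/U}\to0$ restricted to $F$ gives $K_X|_F\cong K_F\otimes\mathcal O_F$, which is the same conclusion.
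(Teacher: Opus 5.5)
Your proposal is correct and follows essentially the same route as the paper. The paper also combines adjunction on the smooth fiber with $F^2=0$ and the $\pi$-ampleness of $-K_X$ to get $2-2g(F)=-K_X\cdot F>0$, hence $F\cong\mathbb P^1$. The difference is presentational: the paper phrases this numerically and simply asserts $F^2=0$, whereas you justify it by trivializing $N_{F/X}$ via $d(\phi-t(y))$ and work with the line bundle identification $K_F\cong K_X|_F$. Your argument also yields $-K_X\cdot F=2$, which the paper records because it is used later in the central-fiber analysis.
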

    \begin{proof}
        We set $F=F_y$, since $-K_X$ is $\pi$-ample, we have $-K_X\cdot F>0$. And the self-intersection satisfies $F^2=0$. By the adjunction formula $2g(F)-2 = K_X\cdot F + F^2 $,
    substituting $F^2=0$ rearranges to
    \[
    2-2g(F) = -K_X\cdot F > 0.
    \]
    This implies $g(F)=0$, so $F\cong \mathbb{P}^1$ and $-K_X\cdot F=2$.
    \end{proof}

We next analyze the central fiber 
\begin{align*}
    F_o =: \pi^{-1}(o) = \sum_i m_i C_i,
\end{align*}
where each $C_i$ is a compact reduced and irreducible curve in $X$. Let $d_i := -K_X\cdot C_i > 0$ (this positivity follows from $\pi$-ampleness of $-K_X$).
Since $F_o$ is numerically equivalent to the generic fiber $F$, we have $-K_X\cdot F_o = -K_X\cdot F = 2$, hence
\[
\sum_i m_i d_i = 2.
\]
As $m_i,d_i$ are positive integers, there are only three possible forms for $F_o$:
\begin{enumerate}
    \item[(i)] $F_o = C$, with $-K_X\cdot C = 2$;
    \item[(ii)] $F_o = 2C$, with $-K_X\cdot C = 1$;
    \item[(iii)] $F_o = C_1 + C_2$, with $-K_X\cdot C_1 = -K_X\cdot C_2 = 1$.
\end{enumerate}

We treat each case separately.

\noindent(i): $F_o = C$

Here $F_o =C$ is a fiber with $-K_X\cdot C=2$. Numerical equivalence to a general fiber gives $C^2 = F\cdot C = 0$. Applying the adjunction formula to $C$:
\[
2p_a(C)-2 = K_X\cdot C + C^2,
\]
which simplifies to
\[
2 = 2 - 2p_a(C),
\]
where $p_a(C)$ denotes the arithmetic genus of $C$.  Therefore, we have $p_a(C)=0$, which implies $C\cong\mathbb{P}^1$. Since $\pi$ is $T$-equivariant and $o$ is the fixed point, so $C$ is invariant under the torus action, By proposition \ref{FFF}, we know the vector field $J\nabla f$ vanishes somewhere on $C$, and we set $Z=\nabla f$, namely there exists a point $q\in C$ such that $Z(q)=0$.

The divisor of $\phi$ is precisely the central fiber: $\operatorname{div}_{X}(\phi) = F_o $, by assumption, $F_o = C$ is reduced and the homogeneous coordinate function has vanishing order $1$, so
\begin{align*}
    \operatorname{div}_{X}(\phi)=\pi^*\operatorname{div}_Y(t)=\pi^*(1\cdot[o])=F_o
\end{align*}

Since $C$ is smooth at $q$, there exists a holomorphic coordinate system $(z,w)$ centered at $q$ such that $C = \{z = 0\}$ near $q$. In the local ring $\mathcal{O}_{X,q}$, the ideal of $C$ is therefore generated by $z$: $I_{C,q} = (z)$.
On the other hand, since $\operatorname{div}(\phi) = C$ with multiplicity one, the principal ideal generated by $\phi$ defines the same Cartier divisor: $(\phi) = (z) \subset \mathcal{O}_{X,q}$.
Consequently, $\phi$ and $z$ differ by a unit in $\mathcal{O}_{X,q}$. Thus there exists a holomorphic unit $h \in \mathcal{O}_{X,q}^{\times}$ such that $\phi = h z$,
here $h$ being a unit means $h(q) \neq 0$.

Taking differentials gives
\[
d\phi = h\,dz + z\,dh.
\]
Since $q \in C = \{z = 0\}$, we have $z(q) = 0$. Therefore
\[
d\phi(q) = h(q)\,dz(q).
\]
Because $h(q) \neq 0$ and $z$ is a local coordinate, $dz(q) \neq 0$. Hence
\[
d\phi(q) \neq 0.
\]

\noindent(ii) $F_o=2C$

In this case, $-K_X\cdot C=1$ and $F_o\cdot C=0$, so $2C^2=0$. Applying the adjunction formula:
\[
1=-K_X\cdot C=2-2p_a(C)+C^2=2-2p_a(C).
\]
The right-hand side is an even integer, which leads to a contradiction. Hence this case cannot occur.

\noindent(iii) $F_o=C_1+C_2$

Here we have $-K_X\cdot C_1=-K_X\cdot C_2=1$. Let $a=C_1\cdot C_2$.
From $F_o\cdot C_1=0$, we get $C_1^2+a=0$. Similarly, $C_2^2+a=0$.
Now apply the adjunction formula to $C_1$:
\[
1=-K_X\cdot C_1=2-2p_a(C_1)+C_1^2=2-2p_a(C_1)-a.
\]
It follows that $p_a(C_1)=0$ and $a=1$. By symmetry, we also have $p_a(C_2)=0$.
Thus $C_1\cong C_2\cong\mathbb{P}^1$, with self-intersections $C_1^2=C_2^2=-1$ and intersection number $C_1\cdot C_2=1$.
In other words, $C_1$ and $C_2$ intersect transversely at a single point. 
\begin{claim}
$C_i$ is $T$-invariant.
\end{claim}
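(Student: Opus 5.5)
The claim is that each component $C_1$, $C_2$ of the central fiber $F_o=C_1+C_2$ is $T$-invariant. The plan is to exploit the connectedness of $T$ together with the discreteness of the set of irreducible components of $F_o$.

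First, $F_o=\pi^{-1}(o)$ is $T$-invariant as a set. Indeed, $\pi$ is $T$-equivariant and $o$ is the unique $T$-fixed point of $Y$. So for every $t\in T$ and $x\in F_o$ we have $\pi(t\cdot x)=t\cdot\pi(x)=t\cdot o=o$. Each $t\in T$ acts on $X$ by a biholomorphism, hence by a homeomorphism, and it preserves $F_o$. Such a map sends irreducible components of $F_o$ to irreducible components: the image of an irreducible compact curve under a biholomorphism is again an irreducible compact curve contained in $F_o$, and maximality is preserved because $t^{-1}$ also preserves $F_o$. This gives a group homomorphism from $T$ to the permutation group of the finite set $\{C_1,C_2\}$.

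Second, we show this homomorphism is trivial. Since $T$ is the closure of the one-parameter group generated by $J\nabla f$, it is a compact connected torus. We argue that the map $T\to\operatorname{Sym}\{C_1,C_2\}$ is continuous, i.e.\ that its fibers are closed. The set
\[
T_1:=\{t\in T: t\cdot C_1=C_1\}
\]
is closed: if $t_k\to t$ with $t_k C_1=C_1$, then for $x\in C_1$ we have $t\cdot x=\lim t_k\cdot x\in C_1$, because $C_1$ is closed and the action is continuous. Thus $T_1$ is a closed subgroup of finite index (index at most $2$), hence also open, and connectedness of $T$ forces $T_1=T$. The same argument applies to $C_2$. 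Alternatively, one can note that the one-parameter group $\{\exp(sJ\nabla f)\}_{s\in\mathbb R}$ is connected and contains the identity, so it cannot swap the two components; invariance then passes to the closure $T$ by the closedness just shown.

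The only subtle point is justifying that elements of $T$ permute irreducible components. I would handle this as above, using that each $t$ is a biholomorphism preserving $F_o$ and that biholomorphisms preserve irreducibility of analytic sets. Since $C_1\neq C_2$ is established in case (iii), with $C_1\cdot C_2=1$ and $C_i^2=-1$, the permutation is well defined on a two-element set, and the conclusion follows.
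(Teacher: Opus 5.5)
Your proposal is correct and follows the same route as the paper: $F_o$ is $T$-invariant, $T$ therefore permutes the two components, giving a homomorphism $T\to\operatorname{Sym}\{C_1,C_2\}$, and this homomorphism is trivial because $T$ is connected. Your closed-stabilizer argument simply makes explicit the continuity that the paper uses tacitly when it says a connected group has a single point as image in the discrete $S_2$. Note that the limit argument only gives $t\cdot C_1\subseteq C_1$, and you need one more line to get equality: $t\cdot C_1$ is itself a component and $C_2\not\subseteq C_1$.
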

\begin{proof}
    Since $\pi$ is $T$-equivariant and $o \in Y$ is fixed by $T$, the central fiber $F_o=\pi^{-1}(o)$ is $T$-invariant. Hence $T$ acts on the finite set of irreducible components $\{C_1,C_2\}$, this gives a group homomorphism $T\longrightarrow \operatorname{Sym}\{C_1,C_2\}\cong S_2$.
Since $T$ is connected and $S_2$ is discrete, the image is a single point. As the identity element of $T$ fixes each $C_i$, the homomorphism is trivial. Therefore each component $C_i$ is individually $T$-invariant.
\end{proof}

\begin{claim}
There exists a point $q\in F^{sm}_o$ such that $Z(q)=0$.
\end{claim}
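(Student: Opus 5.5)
We need a zero of $Z=\nabla f$ on $F_o^{sm}=(C_1\cup C_2)\setminus (C_1\cap C_2)$. The plan is to use the compact torus action on each rational component and count fixed points.

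First, by the previous claim each $C_i\cong\mathbb P^1$ is $T$-invariant. So $T$ acts holomorphically on $C_i$, and Proposition~\ref{FFF} with $n=1$ gives at least two $T$-fixed points on each $C_i$. Recall that $T$ is the closure of the flow of $\xi=J\nabla f$. A point fixed by $T$ is therefore a zero of $J\nabla f$, hence of $Z=\nabla f$. Conversely, zeros of $Z$ on $C_i$ are exactly the fixed points of the flow, and hence of $T$.

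Next we need care, since Proposition~\ref{FFF} as stated applies to an action that might be trivial on $C_i$. Split into two cases.
\begin{enumerate}
\item If $T$ acts trivially on some $C_i$, then every point of $C_i$ is a zero of $Z$. Any point of $C_i$ other than the single intersection point $p_0=C_1\cap C_2$ lies in $F_o^{sm}$, and we are done.
\item Otherwise each $C_i$ carries a nontrivial $T$-action, so there are exactly two fixed points on each $C_i$: a nontrivial $\mathbb C^*$-action on $\mathbb P^1$ is diagonalizable, hence of the form $[x:y]\mapsto[\lambda^a x:\lambda^b y]$ with $a\neq b$, and has exactly two fixed points. The intersection point $p_0$ is $T$-fixed, since $T$ preserves both components and hence their unique common point. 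So $p_0$ is one of the two fixed points on $C_1$. The other fixed point $q\in C_1$ is different from $p_0$, so $q\notin C_2$. It is a smooth point of $C_1$, and since $F_o$ is reduced with $C_1\cap C_2=\{p_0\}$, the point $q$ is a smooth point of $F_o$. Thus $q\in F_o^{sm}$ and $Z(q)=0$.
\end{enumerate}

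The main obstacle is the counting step: we must make sure the second fixed point genuinely differs from $p_0$. This is exactly why we need "at least two distinct" fixed points, which Proposition~\ref{FFF} provides. Transversality $C_1\cdot C_2=1$ guarantees that $p_0$ is the only non-smooth point of $F_o$.

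Finally, to prepare for the subsequent first-order argument, note what happens at $q$. Near $q$ the fiber is reduced and smooth, so as in case (i) we have $\operatorname{div}(\phi)=C_1$ locally, hence $\phi=hz$ with $h(q)\neq0$ and $d\phi(q)\neq0$.
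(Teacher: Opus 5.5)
Your argument is correct and is essentially the paper's proof. Like the paper, you restrict the $T$-action to each $T$-invariant component $C_i\cong\mathbb P^1$, use Proposition~\ref{FFF} to get at least two $T$-fixed points, pick one off the single node $C_1\cap C_2$, and observe that a $T$-fixed point is a zero of $J\nabla f$, hence of $Z$.

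Your case split and your observation that the node is itself fixed are harmless but unnecessary. Proposition~\ref{FFF} already covers a trivial action, since then every point is fixed. And ``at least two distinct fixed points versus one node'' suffices on its own, without pinning the node down as one of exactly two fixed points.
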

\begin{proof}
    Let $F_o^{\mathrm{sm}}=(C_1\cup C_2)\setminus (C_1\cap C_2)$ denote the smooth part of the central fiber. It follows that the $T$-action restricts to a holomorphic action on each $C_i\cong \mathbb{P}^1$, By proposition \ref{FFF}, the $T$-action admits at least two fixed points. Therefore we can choose such a fixed point away from the node $C_1\cap C_2$, Hence there exists $q\in F_o^{\mathrm{sm}}$ such that $q$ is fixed by $T$, so we have $Z(q)=0$. 
\end{proof}

It remains to check that $d\phi(q)\neq 0$. Since $q\in F_o^{\mathrm{sm}}$ and $F_o=\operatorname{div}(\phi)$ is reduced at $q$, the divisor $F_o$ is locally a smooth reduced curve near $q$. Thus there exist local holomorphic coordinates $(z,w)$ centered at $q$ such that $F_o=\{z=0\}$ near $q$. In the local ring $\mathcal{O}_{X,q}$, the ideal of $F_0$ is generated by $z$. On the other hand, since $F_o=\operatorname{div}(\phi)$ with multiplicity one near $q$, the principal ideal $(\phi)$ defines the same reduced Cartier divisor. Hence $(\phi)=(z)\subset \mathcal{O}_{X,q}$.
Therefore there exists a unit $h\in \mathcal{O}_{X,q}^{\times}$ such that $\phi=hz$.

Taking differentials gives
\[
d\phi=h\,dz+z\,dh.
\]
Since $z(q)=0$, we get
\[
d\phi(q)=h(q)\,dz(q).
\]
Here $h(q)\neq 0$ because $h$ is a unit, and $dz(q)\neq 0$ because $z$ is a local coordinate. Consequently,
\[
d\phi(q)\neq 0.
\]
\end{proof}

\subsubsection{The case of $\operatorname{dim}_{\mathbb{C}}Y = 2$}

\begin{proof}

In this case, $X$ is smooth and $Y$ is a normal variety, $\pi\colon X\to Y$ is a birational morphism. Indeed, $\pi$ is projective and satisfies $\pi_*\mathcal{O}_X=\mathcal{O}_Y$, which forces all fibers of $\pi$ to be connected \cite[Corollary 11.3]{hartshorne2013algebraic}. Since $\dim X=\dim Y=2$, the general fiber of $\pi$ is a single point. By the standard relation between the degree of a morphism and the cardinality of its general fiber, the field extension degree $[\mathbb{C}(X):\mathbb{C}(Y)]$ equals $1$, so $\mathbb{C}(X)\cong\mathbb{C}(Y)$ and $\pi$ is birational.

Let $o\in Y$ denote the $T$-fixed point. Owing to the $T$-equivariance of $\pi$, we separate our analysis into two distinct cases.

\noindent (i) $\pi^{-1}(o)$ is a single point.

Write $\pi^{-1}(o)=\{q\}$. The equivariance implies that $q$ is a $T$-fixed point, so $Z(q)=0$.
Since $\pi$ is birational, $Y$ is normal, and no positive-dimensional fiber of $\pi$ passes through $q$, the Zariski Main Theorem \cite[Chapter III, §9]{mumford2004red} implies that $\pi$ restricts to an isomorphism on a neighbourhood of $q$. In particular, $Y$ is smooth at $o$.

We now construct a global homogeneous function $\psi\in R_Y$ such that $d\psi(o)\neq 0$. Since $Y$ is affine, the cotangent space satisfies $T^*_oY\cong \mathfrak{m}_o/\mathfrak{m}_o^2$, where $\mathfrak{m}_o$ is the maximal ideal of $R_Y$, more precisely, $\mathfrak{m}_o=\{\psi \in R_Y : \psi(o) = 0\} \subset R_Y $, 
The ring $R_Y$ is positively graded, and $\mathfrak{m}_o$ is a homogeneous maximal ideal. Since $o$ is the $T$-fixed point, $t\cdot \psi(o)=0$ induces that $\mathfrak{m}_{o}$ is $T$-invariant. Combined with the $T$-graded decomposition of $R_Y$, we conclude that $\mathfrak{m}_o$ is a homogeneous maximal ideal, i.e.,
\begin{align*}
    \mathfrak{m}_o = \bigoplus_{\beta} \left(\mathfrak{m}_o \cap R_{Y,\beta}\right),
\end{align*}
hence the quotient $\mathfrak{m}_o/\mathfrak{m}_o^2$ also inherits the grading. so we can choose a homogeneous representative $\psi\in R_Y$ with $[\psi]\neq 0$, which is equivalent to $d\psi(o)\neq 0$.

Pull back this function and set $\phi=\pi^*\psi$. Since $\psi$ is a global homogeneous regular function on $Y$, $\phi$ is regular on $X$.
Moreover, $\phi$ is homogeneous, consistent with the equivariance of $\pi$.
As $\pi$ is locally an isomorphism near $q$, we have $d\phi(q)\neq 0$. 

\noindent (ii) $\operatorname{Supp}\,\pi^{-1}(o) = \bigcup_{i=1}^r C_i$.

The curves $C_1,\dots,C_r$ are the exceptional components of $\pi^{-1}(0)$. These curves are contracted by the birational morphism $\pi$, so their intersection matrix is negative definite.
In particular, $C_i^2<0$. The $\pi$-ampleness of $-K_X$ gives $-K_X\cdot C_i>0$.
We apply the adjunction formula:
\[
0<-K_X\cdot C_i=2-2p_a(C_i)+C_i^2\le 2-2p_a(C_i).
\]
This forces $p_a(C_i)=0$ and $C_i^2=-1$. Therefore, every exceptional component is a $(-1)$-curve.

\begin{claim}
The fiber $\pi^{-1}(o)$ contains exactly one $(-1)$-curve.
\end{claim}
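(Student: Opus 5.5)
We argue that two distinct $(-1)$-curves in the connected exceptional locus $\pi^{-1}(o)$ would produce a contradiction with negative definiteness. First, $\pi^{-1}(o)$ is connected: $\pi$ is projective with $\pi_*\mathcal O_X=\mathcal O_Y$, so Zariski's connectedness theorem \cite[Corollary 11.3]{hartshorne2013algebraic} applies. Suppose $r\ge 2$. Connectedness then gives two distinct components $C_i, C_j$ with $C_i\cap C_j\neq\emptyset$. Because the components are distinct irreducible curves, $a:=C_i\cdot C_j\ge 1$. Both are $(-1)$-curves, by the adjunction computation just carried out.

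Next we apply the negative definiteness of the intersection matrix of the exceptional curves (Mumford/Grauert, for curves contracted by a birational morphism to a normal surface) to the nonzero divisor $D=C_i+C_j$. This gives
\[
D^2=C_i^2+2C_i\cdot C_j+C_j^2=-2+2a\ge 0,
\]
which contradicts $D^2<0$. Hence $r=1$, so $\pi^{-1}(o)$ is a single irreducible curve $C$ with $C\cong\mathbb P^1$ and $C^2=-1$.

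To make the argument airtight, we also need to rule out the case where every component meets the others only at non-transverse or singular points. This is harmless: for distinct irreducible curves on a smooth surface, any common point already contributes at least $1$ to the intersection number. The components themselves are smooth, since $p_a(C_i)=0$ forces $C_i\cong\mathbb P^1$. The only real input beyond the earlier adjunction computation is therefore the negative definiteness theorem, which is also the step that needs a reference. I expect no serious obstacle, provided negative definiteness is cited in the form valid for a normal (not necessarily smooth) affine $Y$.

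Finally, I anticipate the use of the claim in the rest of the proof: $C\cong\mathbb P^1$ is $T$-invariant, by the same connectedness-of-$T$ argument used in case (iii) of the $\dim Y=1$ analysis. Proposition~\ref{FFF} then provides a $T$-fixed point on $C$. One would then seek a homogeneous $\psi\in R_Y$ whose pullback vanishes to order exactly one along $C$ at such a point. This can be read off because $X$ is then the blow-up of the smooth point $o\in Y$ (Castelnuovo contraction of the unique $(-1)$-curve). That step lies beyond the claim itself.
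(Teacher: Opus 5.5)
Your proof is correct and follows essentially the same route as the paper: connectedness of $\pi^{-1}(o)$, the fact that every component is a $(-1)$-curve, and negative definiteness of the intersection matrix. The only difference is that you test negative definiteness on $C_i+C_j$ for two meeting components, giving $-2+2a\ge 0$. The paper instead uses the full sum $C_1+\dots+C_r$ together with the dual-graph bound $\sum_{i<j}C_i\cdot C_j\ge r-1$, giving $E^2\ge r-2\ge 0$; your version is a harmless, slightly more economical variant.
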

\begin{proof}
    Suppose for contradiction that $r\ge 2$. Let $E = C_1+\dots+C_r$. Since the fiber is connected, the dual graph of the exceptional curves is connected, so
\[
e:=\sum_{i<j} C_i\cdot C_j \ge r-1.
\]
We compute the self-intersection:
\[
E^2 = \sum_{i} C_i^2 + 2\sum_{i<j} C_i\cdot C_j = -r + 2e \ge -r + 2(r-1) = r-2.
\]
If $r\ge 2$, then $E^2\ge 0$. This contradicts the fact that the intersection matrix is negative definite. Therefore we must have $r=1$.
\end{proof}

By Castelnuovo’s contraction theorem \cite[Theorem 1.13.3]{kawamata2024algebraic}, there exists smooth surface $Y'$ and the blow down $\sigma:X\to Y'$ s.t. $\sigma(C)=o'$, combining with $\pi(C)=o$ we have $\pi=h\circ \sigma$ where $h:Y'\to Y$ is the finite birational morphism near $o'$, and $Y$ is normal so $h$ is an isomorphism near $o'$. Consequently, the point $o$ is smooth on $Y$.
The point $o$ is the unique $T$-fixed point, so the curve $C=\pi^{-1}(o)$ is $T$-invariant.
By proposition \ref{FFF}, any holomorphic torus action on $\mathbb{P}^1$ has fixed points, so there exists some $q\in C$ such that $Z(q)=0$.

We now construct a function $\phi$ satisfying $d\phi(q)\neq 0$. Locally near $C$, the morphism $\pi$ is the blow-down of the $(-1)$-curve $C$ to the smooth point $o\in Y$. The point $q\in C\cong \mathbb{P}(T_o Y)$ corresponds to a line $l_q\subset T_o Y$. Since $q$ is a $T$-fixed point, $l_q$ is a $T$-invariant line.
The torus representation on the cotangent space $T_o^*Y\cong \mathfrak{m}_o/\mathfrak{m}_o^2$ is diagonalizable.
We can choose a homogeneous covector $\eta\in T_o^*Y$ such that $\eta(l_q)\neq 0$.
Because $Y$ is affine and its coordinate ring $R_Y$ is graded, we can pick some $\psi\in R_Y$ such that $[\psi]=\eta\in \mathfrak{m}_o/\mathfrak{m}_o^2$.
The element $\psi$ is therefore a globally defined homogeneous regular function on $Y$.
Define $\phi=\pi^*\psi$, then $\phi$ is a global homogeneous regular function on $X$.

We verify that $d\phi(q)\neq 0$, Choose local coordinates $(x,y)$ at $o \in Y$ and the standard blow-up chart $(u,v)$ so that
\[
\pi(u,v) = (u,uv),
\]
and so that the chosen point $q \in C \cong \mathbb{P}^1$ is given by $\{u = 0, v = 0\}$. The corresponding line in $T_oY$ is $\ell_q = \mathbb{C}\partial_x$. Choose a homogeneous covector $\eta \in T_o^*Y$ such that $\eta(\ell_q) \neq 0$. Writing
\[
\psi = ax + by + h(x,y),
\]
where the notation $h(x,y)$ stands for terms vanishing to order at least two at $o$ i.e. $h(x,y)\in (x,y)^2$, so $h(u,uv)\in(u^2)$, and $\eta(\ell_q) \neq 0$ means $a \neq 0$. Hence
\[
\pi^*\psi = au + buv + h(u,uv),
\]
the linear term at $q$ is $au$. Therefore
\[
d(\pi^*\psi)(q) = a\,du \neq 0.
\]
This implies $d(\pi^*\psi)(q)\neq 0$, i.e., $d\phi(q)\neq 0$. 

\end{proof}
%\begin{remark}
 %  If $\dim_{\mathbb{C}}X=\dim_{\mathbb{C}}Y=n$, the same reasoning implies that $\pi$ is a birational morphism.
%Suppose $\dim_{\mathbb{C}}\pi^{-1}(o)=0$; then $\pi^{-1}(o)$ reduces to a single point $\{q\}$.
%By the arguments from Section 4.1.2, there exists a neighbourhood $U$ of $o$ such that $\pi:\pi^{-1}(U)\to U$ is an isomorphism.
%Via its differential $d\pi_q$, this yields an isomorphism $T_q^{1,0}X\cong T_o^{1,0}Y$.
%Taking the pullback by $d\pi_q^*$, we further obtain $\mathfrak{m}_o / \mathfrak{m}_o^2 \cong T_q^{*1,0}X$.
%Hence we may choose $\phi_1,\dots,\phi_n\in\mathfrak{m}_q$ such that $\operatorname{span}_{\mathbb{C}}\{d\phi_1,\dots,d\phi_n\}=T_q^{*1,0}X$.
%Combining this with Proposition \ref{3.4} and Theorem \ref{3.3}, we conclude that $X$ is the Gaussian $\mathbb{C}^n$ shrinker. This observation serves as the starting point for our proof of Theorem \ref{55}.

%Consequently, any non-Gaussian shrinker must satisfy $\dim_{\mathbb{C}}\pi^{-1}(o)\geq 1$.
%In particular, a nontrivial birational-type Kähler Ricci shrinker cannot be locally affine over the vertex of its canonical polarized affine cone.
%Its non-Gaussian geometry necessarily induces genuine exceptional geometry above the cone vertex; e.g. FIK shrinker.
%\end{remark}
\subsubsection{The Geometric Applications on Kähler Ricci shrinker surface}

In this section, we prove the Theorem \ref{main}. For the convenience of the reader, we restate the theorem below.

\begin{theorem}\label{4.4}
Let $(X^2,g,J,f)$ be a complete Kähler Ricci shrinker surface.
\begin{enumerate}
    \item If $\operatorname{Ric}>0$, then $X$ is compact i.e. Fano surface;
    \item If $X$ is noncompact with $\operatorname{Ric}\geqslant 0$, then $X$ is holomorphically isometric to $\mathbb C^2$ or $\mathbb P^1\times\mathbb C$
    with the corresponding Gaussian or product Kähler Ricci shrinker.
\end{enumerate} 
\end{theorem}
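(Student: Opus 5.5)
The proof combines three earlier results: first-order visibility (Theorem~\ref{4.2}), the local eigenvalue identity (Proposition~\ref{3.4}) and the global spectral gap (Theorem~\ref{3.3}). One point needs care. Proposition~\ref{3.4} is stated for real functions, but for a homogeneous \(\phi\) with \(d\phi(q)\neq0\) at least one of \(\operatorname{Re}\phi\), \(\operatorname{Im}\phi\) has nonzero differential at \(q\). Since \(\alpha_\beta\) is real, both parts satisfy \(\nabla f(u)=\alpha_\beta u\), so the proposition applies to that part.

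For part (1), suppose \(X\) is noncompact with \(\operatorname{Ric}>0\). Theorem~\ref{4.2} gives a point \(q\) with \(\nabla f(q)=0\) and a nonconstant \(\phi\in R_X\) with \(d\phi(q)\neq0\). Decompose \(\phi=\sum_\beta\phi_\beta\) into homogeneous pieces. Since \(d\phi(q)=\sum_\beta d\phi_\beta(q)\), some \(\phi_\beta\) has \(d\phi_\beta(q)\neq0\). This \(\phi_\beta\) is nonconstant, so \(\beta\neq0\) and \(\alpha_\beta>0\); in fact the homogeneous functions produced in the proof of Theorem~\ref{4.2} can be used directly. Proposition~\ref{3.4} applied to \(\operatorname{Re}\phi_\beta\) or \(\operatorname{Im}\phi_\beta\) at \(q\) gives \(1-\alpha_\beta\) as a Ricci eigenvalue, so \(\alpha_\beta<1\). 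Theorem~\ref{3.3}(1) gives \(\alpha_\beta>1\), a contradiction. Hence \(X\) is compact. A compact shrinker has \(-K_X\) represented by \(\operatorname{Ric}(\omega)\), which is cohomologous to \(\omega+\sqrt{-1}\partial\bar\partial f\) and hence to \(\omega>0\). So \(X\) is Fano.

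For part (2), the same argument with \(\operatorname{Ric}\ge0\) gives \(\alpha_\beta\le1\) from Proposition~\ref{3.4} and \(\alpha_\beta\ge1\) from Theorem~\ref{3.3}. So \(\alpha_\beta=1\), and Theorem~\ref{3.3}(2) yields \(X\cong\mathbb C\times N\) holomorphically isometrically, where \(N\) is a complete Kähler--Ricci shrinker of complex dimension one. On \(N\), \(\operatorname{Ric}_N\ge0\) holds since \(\operatorname{Ric}_X=0\oplus\operatorname{Ric}_N\). One-dimensional complete Kähler--Ricci shrinkers are classically either the Gaussian \(\mathbb C\) or the round \(\mathbb P^1\). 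This follows from the real two-dimensional classification of Hamilton and others cited in the introduction, together with simple connectivity (\cite[Proposition 3.10]{sun2024k}), which excludes quotients. Alternatively, one can argue as follows. If \(N\) is noncompact, apply Theorem~\ref{4.2}-type reasoning in dimension one: its cone is \(\mathbb A^1\) by Proposition~\ref{A1}, the pulled-back coordinate is a homogeneous function of weight \(1\), and Theorem~\ref{3.3}(2) splits \(N\cong\mathbb C\) Gaussian. If \(N\) is compact, it is a Fano curve \(\mathbb P^1\), and the shrinker is the round metric by uniqueness (a gradient shrinking soliton on \(S^2\) is round). The two cases give \(\mathbb C^2\) and \(\mathbb P^1\times\mathbb C\) with the Gaussian and product shrinkers.

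The main obstacle is the final identification of \(N\). I would handle it by citing the two-dimensional classification. The alternative route needs a dimension-one analogue of first-order visibility to get \(E_1\neq0\) on a noncompact \(N\); that version does hold, since a smooth curve \(N\to\mathbb A^1\) with connected fibers is an isomorphism.
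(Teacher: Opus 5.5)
Your proposal is correct and follows essentially the same route as the paper: first-order visibility (Theorem~\ref{4.2}) combined with Proposition~\ref{3.4} forces $\alpha_\beta<1$ (resp.\ $\alpha_\beta\le 1$), which contradicts (resp.\ saturates) the spectral gap of Theorem~\ref{3.3}, and the resulting splitting $X\cong\mathbb{C}\times N$ is finished by the classification of one-dimensional shrinkers; your handling of real/imaginary parts and of homogeneous components fills in details the paper leaves implicit. The differences are cosmetic: the paper gets the Fano conclusion from the $\dim_{\mathbb{C}}Y=0$ case of the polarized Fano fibration rather than from $[\operatorname{Ric}(\omega)]=[\omega]$, and you have a harmless sign slip, since $\operatorname{Ric}(\omega)=\omega-\sqrt{-1}\partial\bar\partial f$.
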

\begin{proof}
        1. Since $X$ naturally admits a polarized Fano fibration structure $\pi \colon X \to Y$, we split the subsequent analysis based on the complex dimension $\dim_{\mathbb{C}} Y$ of $Y$.
        
        If $\dim_{\mathbb{C}}Y=0$, then $Y=\operatorname{Spec}\mathbb{C}$ and the fibration reduces to a constant map $\pi\colon X\to \mathrm{pt}$. Since $\pi$ is a polarized Fano fibration, $X$ is projective with $-K_X$ ample. In particular, $X$ is a compact Fano manifold. Thus the desired compactness conclusion is immediate in this case. 
         
        If $\dim_{\mathbb{C}}Y=1,2$, by the Theorem \ref{4.2} we know $X$ is first-order visible, combining with $\operatorname{Ric}>0$ and Proposition \ref{3.4}, we know the corresponding weight $\alpha<1$, whic is contradict to Theorem \ref{3.3}, therefore this case is ruled out.

        2. The Proposition \ref{3.4} implies the weight $\alpha\leqslant 1$, the Theorem \ref{3.3} implies the weight $\alpha\geqslant  1$, so $\alpha=1$. By Theorem \ref{3.3}, we know $X$ is holomorphically isometric to $\mathbb{C}\times N$, wehre $N$ is a $1$ dimensional complete Kähler Ricci shrinker curve, by \cite[Chapter 3]{chow2023ricci}, we know $X$ is holomorphically isometric to $\mathbb{C}^2$ or $\mathbb{P}^1\times \mathbb{C}$ with standard Kähler Ricci shrinker structure.
\end{proof}

\subsection{Toric Kähler Ricci shrinkers are first order visible}

In this section, we will prove the following theorem and discuss its geometric applications.

\begin{theorem}\label{tor}
    Any complete noncompact toric Kähler Ricci shrinkers are first-order visible.
\end{theorem}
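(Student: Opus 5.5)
The plan is to use the moment polyhedron, as the introduction already indicates. Let $(X,g,J,f)$ be a complete noncompact toric Kähler--Ricci shrinker with fan $\Sigma$ in $\mathfrak t$; such a fan exists by Proposition~\ref{fan}. Let $P\subset\mathfrak t^*$ be the image of the moment map $\mu$ of the $T^n$-action. By \cite{cifarelli2024finite}, $P$ is a Delzant polyhedron whose normal fan is $\Sigma$. Since the $(\mathbb C^*)^n$-fixed point set is compact, $P$ has finitely many vertices, and these correspond to the fixed points of the torus action. Noncompactness of $X$ means $P$ is unbounded, since $\mu$ is proper. Hence $\operatorname{rec}(P)\neq\{0\}$. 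A nonempty pointed polyhedron that is unbounded has an unbounded edge, so some vertex $v$ has an edge emanating from it of the form $v+\mathbb R_{\ge0}\varepsilon_1$. By the Delzant condition we may take $\varepsilon_1$ to be the first element of a lattice basis $\{\varepsilon_1,\dots,\varepsilon_n\}$ of $M$ along the edges at $v$, and then $\varepsilon_1\in\operatorname{rec}(P)\cap M$.

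Next I produce the function. Let $q=\mu^{-1}(v)$ be the fixed point corresponding to $v$. Since $J\nabla f\in\mathfrak t$ and $q$ is fixed by the whole torus $T^n$, we have $\nabla f(q)=0$. Near $q$ the affine chart $U_{\sigma_v}\cong\mathbb C^n$, associated to the maximal cone $\sigma_v$ dual to the tangent cone of $P$ at $v$, has coordinates $z_i=\chi^{\varepsilon_i}$. Each $z_i$ vanishes to first order at $q$, so $dz_1(q)\neq0$. The key claim is that $\chi^{\varepsilon_1}$ extends to a global regular function on $X$. A character $\chi^m$ is regular on $X_\Sigma$ exactly when $\langle m,u_\rho\rangle\ge0$ for every ray generator $u_\rho$ of $\Sigma$, i.e.\ when $m\in|\Sigma|^\vee$. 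The facet normals $u_i$ of $P$ are precisely these ray generators, and $\operatorname{rec}(P)=\{m:\langle m,u_i\rangle\ge0\ \forall i\}$. Hence $\varepsilon_1\in\operatorname{rec}(P)$ gives $\phi:=\chi^{\varepsilon_1}\in R_X$. The function $\phi$ is nonconstant and homogeneous of weight $\varepsilon_1$, and $d\phi(q)=dz_1(q)\neq0$. This establishes first-order visibility. As a consistency check, $\alpha=\langle\varepsilon_1,\xi\rangle>0$ because $\xi$ lies in the interior of the dual of the recession cone, which is where a proper $f$ forces it.

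The main obstacle I expect is the sign and duality conventions. I need to confirm that the facet normals of the shrinker's moment polyhedron coincide with the ray generators of $\Sigma$, so that $\operatorname{rec}(P)=|\Sigma|^\vee$, and that the edge direction $\varepsilon_1$ at $v$ is exactly the character that restricts to a linear coordinate on $U_{\sigma_v}$ rather than its inverse. I would first check both points against $\mathbb C$, where $P=[0,\infty)$ and the function is $z$, and against $\mathbb P^1\times\mathbb C$. Then I would argue in general using the orbit–cone correspondence: the edge from $v$ is the image of a $T$-invariant curve $\overline{O_\tau}$, and this curve is noncompact precisely because its edge is unbounded.
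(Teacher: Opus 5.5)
\textbf{The combinatorial part matches the paper.} Your argument has the same structure as the paper's proof: an unbounded edge $v+\mathbb R_{\ge0}\varepsilon_1$ of the Delzant polyhedron, then $\varepsilon_1\in\operatorname{rec}(P)=|\Sigma_P|^\vee$, so $\chi^{\varepsilon_1}$ is regular on every chart $U_\sigma$ and glues. At the vertex, $\chi^{\varepsilon_1}$ is one of the linear coordinates on $U_{\sigma_v}\cong\mathbb C^n$ centered at the fixed point $q_{\sigma_v}$, where $\nabla f$ vanishes because $J\nabla f\in\mathfrak t$. The conventions you worried about are correct: the inward facet normals are the ray generators of the normal fan, and $\sigma_v^\vee$ is the cone spanned by the edge directions at $v$.

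\textbf{The gap is the final step, \emph{hence $\phi\in R_X$}.} What you have shown is that $\chi^{\varepsilon_1}$ is a global regular function for the \emph{toric} algebraic structure $X_\Sigma$. In the definition of first-order visibility, however, $R_X$ is the coordinate ring of the Sun--Zhang polarized Fano fibration $\pi:X\to Y=\operatorname{Spec}R_X$, i.e.\ the regular functions for the quasi-projective structure Sun--Zhang put on $X$. These are a priori two different algebraic structures on the same complex manifold. Regularity depends on the structure, so the membership is not automatic, and the paper treats it as a separate step.

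\textbf{How to close it (the paper's route).} $\chi^{\varepsilon_1}$ is holomorphic, and being a $T^n$-character it is an eigenfunction of the soliton torus $T_{\mathrm{sol}}=\overline{\{\exp(t\xi):t\in\mathbb R\}}\subset T^n$.
\begin{enumerate}
\item Since $\pi^{\mathrm{an}}$ is proper with connected fibers onto the normal space $Y^{\mathrm{an}}$, Stein factorization gives $\pi^{\mathrm{an}}_*\mathcal O_{X^{\mathrm{an}}}=\mathcal O_{Y^{\mathrm{an}}}$.
\item Hence $\chi^{\varepsilon_1}=\pi^*\psi$ for some holomorphic $\psi$ on $Y^{\mathrm{an}}$.
\item Equivariance of $\pi$ and injectivity of $\pi^*$ make $\psi$ a $T_{\mathrm{sol}}$-eigenfunction.
\item The characterization of the affine structure of $Y$ as the $T_{\mathrm{sol}}$-finite holomorphic functions then gives $\psi\in R_Y$, so $\chi^{\varepsilon_1}\in R_X$.
\end{enumerate}
The later applications only use the relation $\nabla f(\phi)=\langle\varepsilon_1,\xi\rangle\phi$ with positive weight, which you do have. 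The theorem as stated, though, requires $\phi\in R_X$.

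\textbf{A smaller omission.} You use properness of $\mu$ twice: for unboundedness of $P$ and for your check $\langle\varepsilon_1,\xi\rangle>0$. You assert it but should derive it. The paper's route is: $H^1(X;\mathbb R)=0$ makes the action Hamiltonian, $f=\langle\mu,\xi\rangle$ after adding a constant, and properness of $f$ then gives properness of $\mu$.
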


We first fix the basic setup. Let $(X,g,J,f)$ be a complete toric Kähler Ricci shrinker. Let $T_{\mathbb{C}}$ denote the algebraic torus $(\mathbb{C}^*)^n$, and let $T^n\subseteq T_{\mathbb{C}}$ be its compact real form. By the definition of a toric manifold and the fan in section 2.2, there is a finite fan
\(\Sigma\) such that \(X\simeq X_\Sigma\), and
\[
    X^{T^n}=X^{T_{\mathbb C}}
    =\{q_\sigma:\sigma\in\Sigma(n)\}.
\]
Hence the torus fixed-point set is finite.

Now we prove the Theorem \ref{tor}.
\begin{proof}
    Since  $H^1(X;\mathbb{R})=0$ \cite{wylie2008complete}, the real torus action is Hamiltonian. Let $\mu:X\longrightarrow\mathfrak t^*$ be a moment map. $J\nabla f\in \mathfrak{t}$ means there exists $\xi\in\mathfrak{t}$ such that fundamental vector field $\xi^{\#}=J\nabla f$, we have
\[
  d\langle\mu,\xi\rangle
  =-\iota_{J\nabla f}\omega
  =d f.
\]
After adding a constant to $f$, we may therefore assume that $f=\langle\mu,\xi\rangle$.

Since $X$ is complete and noncompact, $f$ is proper and bounded from below. Hence the moment-map component
$\langle\mu,\xi\rangle$ is proper and bounded from below. In fact, the moment map $\mu$ is proper.  Indeed, if
$K\subset\mathfrak t^*$ is compact, then $\mu^{-1}(K) \subset f^{-1}\bigl(\langle K,\xi\rangle\bigr)$. The set $\langle K,\xi\rangle\subset\mathbb R$ is compact, so $\mu^{-1}(K)$ is compact.

Let
\begin{align*}
    N_{\ZZ}:=\Hom(\mathbb{C}^*, T_{\mathbb{C}}),\qquad
  M_{\ZZ}:=\Hom(
    T_{\CC},\CC^*)\simeq N_{\ZZ}^\vee.
\end{align*}
be the one-parameter-subgroup and character lattices. We may now apply
the noncompact Delzant theorem in the precise form
\cite[Lemma~2.13]{cifarelli2022uniqueness}.  The hypotheses hold because the fixed-point
set is finite and $\langle\mu,\xi\rangle$ is proper and bounded below. Thus
\begin{align*}
    P:=\mu(X)\subset M_{\RR}:=M_{\ZZ}\otimes_{\ZZ}\RR
\end{align*}
is a full-dimensional Delzant polyhedron, and $(X,\omega)$ is $\TT^n$-equivariantly symplectomorphic to
the symplectic toric manifold associated with $P$.  Because the given
action already extends holomorphically to $\TT^n_{\CC}$,
\cite[Proposition 2.8, Lemma~2.14]{cifarelli2022uniqueness} gives a $\TT^n_{\CC}$-equivariant
biholomorphism $(X,J)\simeq X_{\Sigma_P}$, where $\Sigma_P$ is the normal fan of $P$.

The properness of $\mu$ and the noncompactness of $X$ imply that $P$ is noncompact. Since $P$ is a finite, strongly convex polyhedron with a vertex, it has an unbounded one-dimensional face. 
Choose such a face and write it as
\begin{align*}
    E=v+\RR_{\geq0}m,
\end{align*}
where $v$ is a vertex and $m$ is the primitive lattice generator of the ray determined by $E$ in the character lattice $M_{\ZZ}$. By construction, $m$ is primitive integral and nonzero, so 
$0\neq m\in M_{\mathbb Z}$. Write
\begin{align*}
  P=
    \left\{
        x\in M_{\mathbb R}:
        \langle x,u_j\rangle\ge \lambda_j,\quad
        j=1,\ldots,d
    \right\},
\end{align*}
where \(u_j\in N_{\mathbb Z}\) are the primitive inward-pointing
facet normals. Since $v+tm\in E\subset P$ for all $t\geq 0$, we have $\langle v,u_j\rangle +t\langle m,u_j\rangle \ge\lambda_j$
for all $t\ge0$. Hence $\langle m,u_j\rangle\ge0$ for every $j$.
Therefore, for every $x\in P$ and $t\ge0$,
\begin{align*}
   \langle x+tm,u_j\rangle
    =
    \langle x,u_j\rangle
    +t\langle m,u_j\rangle
    \ge\lambda_j,
\end{align*}
and thus \(x+tm\in P\). It follows that $0\neq m\in\operatorname{rec}(P)\cap M_{\mathbb Z}$.

Consider the toric character $\varphi:=\chi^m$.
By \cite[Theorem 7.1.6]{cox2011graduate}, the support of the normal fan satisfies
\[
  |\Sigma_P|=\rec(P)^\vee.
\]
Consequently, for every cone $\sigma'\in\Sigma_P$ and every
$u\in\sigma'$, one has $\langle m,u\rangle\geq0$.  Equivalently,
$m\in(\sigma')^\vee$, so $\chi^m$ is regular on every affine toric chart
\[
  U_{\sigma'}=\Spec\CC[(\sigma')^\vee\cap M_{\ZZ}].
\]
These local extensions agree on overlaps, since they all restrict to
the same character \(\chi^m\) on the common dense torus
\(T_{\mathbb C}\).  Hence they glue to a global algebraic regular
function
\begin{align*}
    \chi^m\in
    \Gamma(X_{\Sigma_P},\mathcal O_{X_{\Sigma_P}}^{\mathrm{alg}})
\end{align*}
hence a global holomorphic function on $X$.

Let $\sigma_v\in\Sigma_P$ be the maximal cone corresponding to the
vertex $v$, and let $q=q_{\sigma_v}$ be the corresponding torus fixed
point.  The Delzant condition says that the primitive directions
$m_1,\ldots,m_n$ of the edges emanating from $v$ form a $\ZZ$-basis of
$M_{\ZZ}$.  Relabeling if necessary, we may take $m_n=m$.  On the smooth affine
chart $U_{\sigma_v}\simeq\CC^n$, the characters
\[
  z_i:=\chi^{m_i},\qquad i=1,\ldots,n,
\]
are coordinates centered at $q$.  Therefore
\[
  \varphi=z_n,\qquad \varphi(q)=0,\qquad
  d\varphi(q)=d z_n\neq0.                              
\]

It remains to compare the toric algebraic structure with the canonical Sun--Zhang algebraic structure.  Let
\[
    \pi:X\longrightarrow Y=\Spec R_X
\]
be the canonical polarized Fano fibration. We consider its analytification
\[
    \pi^{an}:X^{\mathrm{an}}\longrightarrow Y^{\mathrm{an}}.
\]
 Since \(\pi\) is projective, \(\pi^{\mathrm{an}}\) is proper.
Moreover, its fibers are connected and \(Y^{\mathrm{an}}\) is normal.
Hence analytic Stein factorization \cite{barth2003compact} gives $\pi^{\mathrm{an}}_* \mathcal O_{X^{\mathrm{an}}}^{\mathrm{hol}}= \mathcal O_{Y^{\mathrm{an}}}^{\mathrm{hol}}$. Consequently, every global holomorphic function on \(X^{\mathrm{an}}\)
descends uniquely to \(Y^{\mathrm{an}}\). In particular, there exists a
unique $\psi\in
    \Gamma\!\left(
        Y^{\mathrm{an}},\mathcal O_{Y^{\mathrm{an}}}^\mathrm{hol}
    \right)$ such that $\varphi=(\pi^{\mathrm{an}})^*\psi$.

Let $T_{\mathrm{sol}}:=\overline{\{\exp(t\xi):t\in\mathbb R\}}\subset T^n$. Since \(\varphi=\chi^m\) is a character of the ambient toric torus, its
restriction to \(T_{\mathrm{sol}}\) is a single weight.  Thus there is a
character \(\chi_\beta:T_{\mathrm{sol}}\to S^1\) such that
\[
    a^*\varphi=\chi_\beta(a)\varphi,
    \qquad a\in T_{\mathrm{sol}}.
\]
Since \(\pi\) is \(T_{\mathrm{sol}}\)-equivariant, we have
\[
\begin{aligned}
    \pi^*(a^*\psi)=a^*(\pi^*\psi) =a^*\varphi =\chi_\beta(a)\varphi =\pi^*\bigl(\chi_\beta(a)\psi\bigr).
\end{aligned}
\]
The surjectivity of \(\pi\) implies that \(\pi^*\) is injective, and
therefore $a^*\psi=\chi_\beta(a)\psi$. Hence the \(T_{\mathrm{sol}}\)-orbit of \(\psi\) spans the
one-dimensional space \(\mathbb C\psi\), so \(\psi\) is
\(T_{\mathrm{sol}}\)-finite. By the intrinsic characterization of the affine algebraic structure \cite[\S2.4]{ZhangBirationalKRFNotes}, we have
\begin{align*}
    \Gamma(Y,\mathcal O_Y^{\mathrm{alg}})=\Gamma(Y^{\mathrm{an}},\mathcal O_{Y^{\mathrm{an}}}^{\mathrm{hol}})_{T_{\mathrm{sol}}\text{-finite}},
\end{align*}
and consequently $\psi\in\Gamma(Y,\mathcal O_Y^{\mathrm{alg}})$. It follows that $\varphi=\pi^*\psi\in R_X$.
\end{proof}

We then obtain the following results analogous to the case of Kähler Ricci shrinker surfaces.
\begin{corollary}
    Let $(X^n,g,J,f)$ be a complete toric Kähler Ricci shrinker.
\begin{enumerate}
    \item If $\operatorname{Ric}>0$, then $X$ is compact i.e. toric Fano manifold;
    \item If $X$ is noncompact with $\operatorname{Ric}\geqslant 0$, then $X$ is $T_{\mathbb{C}}$-equivalently holomorphically isometric to $\mathbb{C}\times N$, where $N$ itself is a complete toric Kähler Ricci shrinker.
\end{enumerate} 
\end{corollary}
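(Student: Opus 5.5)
The plan mirrors the proof of Theorem~\ref{4.4}, with Theorem~\ref{tor} supplying first-order visibility in place of Theorem~\ref{4.2}. For part~1, I argue by contradiction: suppose $X$ is noncompact with $\operatorname{Ric}>0$. By Theorem~\ref{tor} there are a $T$-fixed point $q$ (so $\nabla f(q)=0$) and a regular function $\varphi=\chi^m\in R_X$ with $d\varphi(q)\neq0$. This $\varphi$ is already homogeneous for the soliton torus $T_{\mathrm{sol}}$, as the proof of Theorem~\ref{tor} showed, so $\nabla f(\varphi)=\alpha_\beta\varphi$. It is nonconstant because $d\varphi(q)\ne0$, and hence $\alpha_\beta>0$.

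Proposition~\ref{3.4} is stated for real functions, so I apply it to $u=\operatorname{Re}\varphi$ or $\operatorname{Im}\varphi$. Since $\nabla f$ is a real vector field, $\nabla f(u)=\alpha_\beta u$ holds for both. Because $d\varphi(q)\ne0$, at least one of $d\operatorname{Re}\varphi(q)$, $d\operatorname{Im}\varphi(q)$ is nonzero. Proposition~\ref{3.4} then gives $\alpha_\beta<1$ under $\operatorname{Ric}_q>0$, while Theorem~\ref{3.3}(1) gives $\alpha_\beta>1$. This is a contradiction, so $X$ is compact. A compact toric shrinker then has $Y$ a point, so the polarized Fano fibration makes $-K_X$ ample, and $X$ is a toric Fano manifold.

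For part~2, the same two inequalities become $\alpha_\beta\le1$ (from $\operatorname{Ric}\ge0$ and Proposition~\ref{3.4}) and $\alpha_\beta\ge1$ (from Theorem~\ref{3.3}), so $\alpha_\beta=1$. Theorem~\ref{3.3}(2) then gives a holomorphic isometry $X\cong\mathbb C\times N$ in which $N$ is a complete Kähler--Ricci shrinker. The proof of Proposition~\ref{3.2} makes the splitting explicit: $\nabla^2\operatorname{Re}\varphi=0$, so $\nabla\operatorname{Re}\varphi$ and $J\nabla\operatorname{Re}\varphi$ are parallel and span the flat $\mathbb C$-factor. Moreover $\varphi$ itself is the linear coordinate $z$ on that factor, up to an additive constant, since $d\varphi$ is parallel.

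The remaining task, and the main obstacle, is the toric refinement: showing that the splitting is $T_{\mathbb C}$-equivariant and that $N$ is toric. I would argue as follows.
\begin{enumerate}
\item The metric is $T^n$-invariant, and $\varphi=\chi^m$ transforms by a character of $T^n$. Hence $|d\varphi|$ is invariant, and $T^n$ preserves the parallel distribution spanned by $\nabla\operatorname{Re}\varphi$ and $J\nabla\operatorname{Re}\varphi$, together with its orthogonal complement.
\item A connected group of isometries preserving both de Rham factors acts as a product, with $T^n\subset\operatorname{Isom}(\mathbb C)\times\operatorname{Isom}(N)$. Its component on $\mathbb C$ is a torus of rotations, because it preserves the level sets of $\varphi$ up to phase.
\item The kernel of the character $\chi^m$ is a codimension-one subtorus $T^{n-1}$. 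It fixes the $\mathbb C$-factor pointwise, so it acts effectively on $N$.
\end{enumerate}

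Complexifying, $(\mathbb C^*)^{n-1}$ acts on $N$ with a dense orbit, because $T_{\mathbb C}$ has a dense orbit in $X$ and the $\mathbb C^*$ factor rotates $\mathbb C$. Its fixed points form $X^{T}\cap(\{0\}\times N)$, which is compact. The soliton field decomposes as $J\nabla f=z\text{-rotation}+J\nabla f_N$ with $J\nabla f_N\in\operatorname{Lie}(T^{n-1})$. Together these verify Definition~\ref{toricc} and the toric shrinker definition for $N$. The delicate point I expect to have to check carefully is that the $\mathbb C$-component of $T^n$ is exactly the $S^1$ determined by $m$. This should follow from $m$ being primitive, which makes $\chi^m$ a surjective character with connected kernel.
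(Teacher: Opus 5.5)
Your proposal is correct and follows essentially the same route as the paper. Part~1 combines the first-order visibility of Theorem~\ref{tor} with Proposition~\ref{3.4} and Theorem~\ref{3.3}, while part~2 forces $\alpha=1$, identifies $\chi^m$ (after normalization) with the $\mathbb C$-coordinate, uses that the torus preserves $\ker d\chi^m$ and its orthogonal complement to get $a\cdot(z,y)=(\chi^m(a)z,\rho_a(y))$, and lets $\ker\chi^m\cong(\mathbb C^*)^{n-1}$ act effectively on $N$. Your ``complexifying'' step is justified in the paper via $J$-invariance of both distributions, and your claims about the fixed points of $N$ and about $J\nabla f_N$ implicitly use, as the paper's do, that $T_{\mathbb C}$ and $\ker\chi^m$ have the same image in $\operatorname{Aut}(N)$ (true because the latter already has a dense orbit and the former commutes with it).
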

\begin{proof}
The first assertion follows from Theorem~\ref{4.4}. Suppose that \(X\) is noncompact and \(\Ric\geq0\). Let $\varphi_0=\chi^m$ be the homogeneous regular function constructed above. The Theorem~\ref{4.4} gives a biholomorphic isometric splitting 
\begin{align*}
    \Phi:X\longrightarrow\mathbb C\times N
\end{align*}
such that, after translating the origin of the \(\mathbb C\)-factor
and replacing \(\varphi_0\) by a nonzero constant multiple, so we have $\operatorname{pr}_{\mathbb C}\circ\Phi=\varphi$, where \(\varphi\) has the same \(T_{\mathbb C}\)-weight as
\(\chi^m\). Hence $\varphi(a\cdot x)=\chi^m(a)\varphi(x)$ for $a\in T_{\mathbb C}$. And we have 
\begin{align}\label{789}
    A_a:=\Phi\circ a\circ\Phi^{-1}, \qquad \operatorname{pr}_{\mathbb C}\circ A_a=\chi^m(a)\operatorname{pr}_{\mathbb C}.    
\end{align}
 Set
\[
    \mathcal D_N:=\ker d\operatorname{pr}_{\mathbb C},
    \qquad
    \mathcal D_{\mathbb C}:=\mathcal D_N^\perp .
\]
For \(a\in T\), the identity \ref{789} shows that \(A_a\) preserves \(\mathcal D_N\). Since \(A_a\) is an
isometry, it also preserves \(\mathcal D_{\mathbb C}\). Hence the two
product directions are not mixed, and
\[
    A_a(z,y)=\bigl(\chi^m(a)z,\rho_a(y)\bigr)
\]
for some holomorphic isometry \(\rho_a\) of \(N\).

Both distributions are \(J\)-invariant. Since the infinitesimal action
of \(i\eta\in\mathfrak t_{\mathbb C}\) is \(J\) times that of
\(\eta\in\mathfrak t\), they are preserved by the complexified
infinitesimal action as well. As \(T_{\mathbb C}\) is connected, the
same product form holds for every \(a\in T_{\mathbb C}\). Thus 
\begin{align*}
  A_a(z,y)=\bigl(\chi^m(a)z,\rho_a(y)\bigr), \qquad  a\in T_{\mathbb C}, 
\end{align*}
 where \(\rho:T_{\mathbb C}\curvearrowright N\) is a holomorphic action.
Therefore \(\Phi\) is \(T_{\mathbb C}\)-equivariant.

Let $K_{\mathbb C}:=\ker\chi^m$. Since \(m\) is primitive, so $K_{\mathbb C}\simeq(\mathbb C^*)^{n-1}$. For \(k\in K_{\mathbb C}\), we know $k\cdot(z,y)=(z,\rho_k(y))$. This action is effective: if
\(\rho_k=\Id_N\), then \(k\) acts trivially on
\(\mathbb C\times N\simeq X\), and hence \(k=e\). By the argument of \cite[Section 2.2]{cifarelli2022uniqueness}, we know $N$ is complex toric manifold. Finally, the soliton splitting gives $g=g_{\mathbb C}+g_N$ and $ f=f_{\mathbb C}+f_N$. The induced soliton vector field on \(N\)
belongs to the torus action generated by the compact part of
\(K_{\mathbb C}\). Hence
\((N,g_N,J_N,f_N)\) is a complete toric K\"ahler Ricci shrinker.
\end{proof}
The above results can induce the following corollary.
\begin{corollary}
Let \((X^n,g,J,f)\) be a complete toric K\"ahler Ricci shrinker
with constant scalar curvature. If $\operatorname{Ric}\geq 0$,
then \(X\) is rigid.
\end{corollary}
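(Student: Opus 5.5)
We read ``rigid'' in the usual sense for shrinkers: $X$ is holomorphically isometric to a product $\mathbb C^k\times N$ of the Gaussian shrinker on $\mathbb C^k$ with a compact K\"ahler--Einstein manifold $N$, where $N$ is toric Fano and $f$ is constant along $N$. The plan is to peel off flat factors one at a time using part 2 of the preceding corollary. Once the remaining factor is compact, the constant scalar curvature hypothesis forces it to be Einstein.

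\emph{Step 1 (induction on dimension).} Suppose $X$ is noncompact. Part 2 of the preceding corollary gives a $T_{\mathbb C}$-equivariant holomorphic isometry $X\cong\mathbb C\times N_1$, where $N_1$ is a complete toric K\"ahler--Ricci shrinker and $f=\tfrac12|z|^2+f_{N_1}$. We check that $N_1$ inherits the hypotheses.
\begin{itemize}
\item Because the metric is a product, $\operatorname{Ric}_X=0\oplus\operatorname{Ric}_{N_1}$. Hence $\operatorname{Ric}_{N_1}\ge0$.
\item Likewise $R_X=R_{\mathbb C}+R_{N_1}=R_{N_1}$ along each slice. So $R_{N_1}$ is constant.
\end{itemize}
Since $\dim_{\mathbb C}N_1=n-1$, we may iterate. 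After finitely many steps we obtain
\[
X\cong\mathbb C^k\times N,
\]
where $N$ is a compact toric K\"ahler--Ricci shrinker with $\operatorname{Ric}_N\ge0$ and constant scalar curvature, or $N$ is a point. The compactness of $N$ is exactly the stopping condition: whenever a factor is noncompact, part 2 of the corollary applies to it again.

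\emph{Step 2 (the compact factor is Einstein).} On $N$, Lemma~\ref{iden} gives $R_N+\Delta f_N=\dim_{\mathbb R}N/2\cdot 1$ in our normalization; the only point used is that $\Delta f_N$ equals a constant minus $R_N$. Since $R_N$ is constant, $\Delta f_N$ is therefore a constant $c$. Integrating over the closed manifold $N$ gives $c\operatorname{Vol}(N)=0$, so $c=0$. Then $f_N$ is harmonic on a compact manifold, hence constant. The shrinker equation reduces to $\operatorname{Ric}_N=g_N$. Thus $N$ is K\"ahler--Einstein with positive Ricci curvature, and in particular $N$ is toric Fano. By Step 1, $f=\tfrac12|z|^2+\mathrm{const}$, which is the Gaussian shrinker on $\mathbb C^k$ times a K\"ahler--Einstein factor. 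This is the claimed rigidity.

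\emph{Main obstacle.} The main obstacle is Step 1. One must check carefully that each factor $N_j$ is again a \emph{toric} shrinker, so that the corollary can be reapplied, and that the normalization of $f$ is preserved. The corollary already asserts both properties, so the remaining work is bookkeeping. This includes checking that the effective $(\mathbb C^*)^{n-j}$-action on $N_j$ still contains the soliton field of $N_j$.
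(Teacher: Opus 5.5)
Your proposal is correct and follows essentially the same route as the paper: iterate the toric splitting from part 2 of the preceding corollary, passing $\operatorname{Ric}\ge 0$ and constant scalar curvature to each factor, until the remaining factor $N$ is compact; then use the traced shrinker equation to force $f_N$ to be constant, so that $N$ is K\"ahler--Einstein. Your integration argument showing $\Delta f_N\equiv 0$ just spells out the paper's one-line appeal to the traced equation. The exact constant in $R_N+\Delta f_N$ depends on whether you use Riemannian or K\"ahler conventions, but, as you note, only its constancy matters.
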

\begin{proof}
    If $X$ is compact, the proof is straightforward. If \(X\) is noncompact, the toric splitting theorem yields $X\cong\mathbb C\times X_1$,
where \(X_1\) is again a complete toric K\"ahler Ricci shrinker with
nonnegative Ricci curvature. Since the splitting is isometric and
the scalar curvature of \(X\) is constant, the scalar curvature of
\(X_1\) is also constant. Iterating the argument gives $X\cong\mathbb C^k\times N$, where \(N\) is compact. The compact shrinker \(N\) has constant scalar
curvature, and hence its potential is constant by the traced shrinker
equation. Thus \(N\) is K\"ahler--Einstein, proving rigidity.
\end{proof}

\section{Mixed Ricci Signature on Shrinker Examples}

We establish a first-order visibility criterion for Ricci negativity on complete Kähler Ricci shrinkers. The BCCD and some birational contraction-type examples are two manifestations of the same mechanism: their associated Fano fibrations have homogeneous regular functions with nonzero first jet at a soliton fixed point.

\begin{proposition}\label{5.1}
    Let $(X,g,J,f)$ be a complete Kähler Ricci shrinker satisfying the first order visibility. Then  there exists $v \in T_q^{1,0}X$
such that $\operatorname{Ric}_q(v,\bar v) \le 0$.
If moreover $X$ does not split holomorphically and isometrically  as $\mathbb C \times N$,
then the inequality is strict: $\operatorname{Ric}_q(v,\bar v) < 0$.
\end{proposition}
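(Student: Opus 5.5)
Given first-order visibility, fix $q\in X$ with $\nabla f(q)=0$ and a nonconstant $\phi\in R_X$ with $d\phi(q)\neq0$. The plan has three steps: reduce $\phi$ to a homogeneous function, read off a Ricci eigenvalue at $q$ from Proposition~\ref{3.4}, and then control its sign with the spectral gap of Theorem~\ref{3.3}.

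\textbf{Reduction to a homogeneous function.} Decompose $\phi=\sum_\beta\phi_\beta$ into homogeneous components. Since $d\phi(q)=\sum_\beta d\phi_\beta(q)\neq0$, some $\phi_\beta$ has $d\phi_\beta(q)\neq0$. That component is nonconstant, so $\beta\neq0$ and $\alpha:=\alpha_\beta>0$. Replace $\phi$ by $\phi_\beta$.

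\textbf{Extracting a Ricci eigenvector.} Since $\phi$ is holomorphic, $d\phi(q)\neq0$ forces both $u=\operatorname{Re}\phi$ and $\operatorname{Im}\phi$ to have nonzero differential at $q$: we have $d\,\operatorname{Im}\phi=-d u\circ J$. Also $\nabla f(u)=\alpha u$, because $\nabla f$ is real and $\alpha$ is real. Proposition~\ref{3.4} applied to $u$ gives $(\operatorname{Ric}_q^\sharp)^*du(q)=(1-\alpha)du(q)$. Let $w=\nabla u(q)\neq0$, a real Ricci eigenvector with eigenvalue $1-\alpha$, and set $v=\tfrac12(w-\sqrt{-1}Jw)\in T^{1,0}_qX$. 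Because $\operatorname{Ric}$ is $J$-invariant in the Kähler setting, $Jw$ is an eigenvector with the same eigenvalue, and
\[
\operatorname{Ric}_q(v,\bar v)=c\,(1-\alpha)|w|^2
\]
for a positive normalization constant $c$.

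\textbf{Sign via the spectral gap.} Theorem~\ref{3.3} gives $\alpha\ge1$, so $\operatorname{Ric}_q(v,\bar v)\le0$, which is the first claim. For strictness, suppose equality holds. Then $\alpha=1$, and Theorem~\ref{3.3}(2) shows that $X$ is holomorphically isometric to $\mathbb C\times N$. This contradicts the non-splitting hypothesis, so $\alpha>1$ and $\operatorname{Ric}_q(v,\bar v)<0$.

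The main obstacle is bookkeeping rather than analysis. One must check that Proposition~\ref{3.4}, stated for real functions, transfers correctly to the $(1,0)$-vector $v$, which uses the $J$-invariance of $\operatorname{Ric}$. One must also check that the reduction to a homogeneous component keeps $d\phi_\beta(q)\neq0$; this holds because differentiation commutes with the finite sum.
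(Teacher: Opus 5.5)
Your proposal is correct and follows the paper's argument: the homogeneous weight $\alpha\ge1$ from the spectral gap, the Ricci eigenvalue $1-\alpha$ at the fixed point via Proposition~\ref{3.4}, and the equality case $\alpha=1$ forcing the splitting $\mathbb C\times N$ by Theorem~\ref{3.3}. You also make explicit two steps the paper leaves implicit: the reduction of a general visible $\phi$ to a homogeneous component with $d\phi_\beta(q)\neq0$, and the passage from the real eigenvector $\nabla u(q)$ to a $(1,0)$-vector $v$ with $\operatorname{Ric}_q(v,\bar v)=\tfrac12(1-\alpha)|w|^2$.
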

\begin{proof}
    Let $Z=\nabla f$, since $\phi$ is a homogeneous regular function, the same argument from Section 3 yields $Z(\phi) = \alpha \phi$ and $-\Delta_f \phi = \alpha \phi$ for some positive weight $\alpha$. By Proposition \ref{3.2}, we have $\alpha \ge 1$. As $Z(q) = 0$ and $d\phi(q) \neq 0$, the argument used in Proposition \ref{3.4} gives
\[
\bigl(\operatorname{Ric}_q^\sharp\bigr)^*d\phi(q)
=(1-\alpha)d\phi(q).
\]
Let $\{\rho_i\}$ denote the eigenvalues of $\operatorname{Ric}_q^\sharp$. There exists some index $i$ such that $\alpha = 1 - \rho_i$, which immediately implies $\rho_i \le 0$. If $\rho_i = 0$, then $\alpha = 1$, and Theorem \ref{3.3} forces the universal cover of $X$ to split holomorphically and isometrically as $X \simeq \mathbb{C} \times N$, where $N$ is another complete Kähler Ricci shrinker. If such a splitting does not occur for $X$, then we obtain the strict inequality $\operatorname{Ric}_q(v,\bar v) < 0$ along the eigenvector $v$ corresponding to $\rho_i$.
\end{proof}
To rule out the splitting case, we introduce the following result.
\begin{lemma}\label{ffff}
    Let $X$ and $N$ be complex manifolds. Assume $X$ is biholomorphic to $\mathbb{C} \times N$. If $E \subset X$ is a compact complex submanifold, then the projection $E \to \mathbb{C}$ is constant. As a consequence, the holomorphic normal bundle $N_{E/X}$ admits a trivial line subbundle.
\end{lemma}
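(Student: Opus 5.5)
Fix a biholomorphism $\Phi\colon X\to\mathbb C\times N$ and let $p:=\operatorname{pr}_{\mathbb C}\circ\Phi|_E\colon E\to\mathbb C$. The first claim is then a maximum principle argument. The map $p$ is a holomorphic function on the compact complex manifold $E$. If $E$ is disconnected, we argue on each connected component separately; in the application $E$ will be an irreducible curve, so it is connected. On a compact connected complex manifold, $|p|$ attains its maximum, so by the maximum principle $p$ is constant, say $p\equiv z_0$. Hence $\Phi(E)\subset\{z_0\}\times N$.

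For the normal bundle, write $\Phi(E)=\{z_0\}\times E'$, where $E'\subset N$ is a compact complex submanifold identified with $E$ via $\operatorname{pr}_N\circ\Phi$. Along $E$ we have the splitting
\[
T_{\Phi(x)}(\mathbb C\times N)=T_{z_0}\mathbb C\oplus T_{y}N,\qquad \Phi(x)=(z_0,y).
\]
The vector field $\partial_z$ on the $\mathbb C$-factor is a global nowhere-vanishing holomorphic vector field. Its pullback $\Phi^*\partial_z$ restricted to $E$ is a nowhere-vanishing holomorphic section of $TX|_E$.

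Next we check that this section stays nonzero in the quotient $N_{E/X}=TX|_E/TE$. Since $TE\subset \ker d(\operatorname{pr}_{\mathbb C}\circ\Phi)$, while $d\operatorname{pr}_{\mathbb C}(\partial_z)=1$, the image of $\Phi^*\partial_z$ in $N_{E/X}$ vanishes nowhere. A nowhere-vanishing holomorphic section spans a holomorphically trivial line subbundle $\mathcal O_E\hookrightarrow N_{E/X}$.

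Equivalently, one can phrase this as $N_{E/X}\cong \mathcal O_E\oplus N_{E'/N}$, transported back via $\Phi$, which makes the trivial summand explicit. It follows that for a curve $E$ the degree of $N_{E/X}$ cannot be negative when $\dim X=2$. In particular, no $(-1)$-curve can live in a product $\mathbb C\times N$, and this is presumably how the lemma will be used for the BCCD shrinker.

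I expect no real obstacle here. The only point requiring care is connectedness, handled componentwise as above, and the verification that $\Phi^*\partial_z$ is transverse to $E$, which follows from the constancy of $p$.
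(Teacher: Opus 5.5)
Your proof is correct and follows essentially the same route as the paper. The maximum principle makes $\operatorname{pr}_{\mathbb C}\circ\Phi$ constant on $E$, and the $\mathbb C$-direction $\partial_z$ then supplies the trivial summand in $N_{E/X}\cong \mathcal O_E\oplus N_{E'/N}$. Your explicit check that $\Phi^*\partial_z$ is nowhere zero modulo $TE$ is a welcome refinement, as is your remark that constancy must be read componentwise (the paper tacitly assumes $E$ is connected); neither changes the approach.
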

\begin{proof}
   The maximum principle forces the projection $p\colon E\to\mathbb{C}$ to be constant with value $z_0$. Then $E\subseteq \{z_0\}\times N$ and $TE\subseteq TN\big|_E$, so
\[
N_{E/X} \cong T\mathbb{C}\big|_E \oplus N_{E/(\{z_0\}\times N)}.
\]
This shows that $N_{E/X}$ admits a trivial holomorphic line subbundle.
\end{proof}

\subsection{Application to the BCCD Shrinker}
For BCCD shrinker $\operatorname{Bl}_{p}(\mathbb{P}^1\times \mathbb{C})$, the Fano fibration $\pi:\operatorname{Bl}_{p}(\mathbb{P}^1\times \mathbb{C}) \to \mathbb{C} $ is the composition of the blow-up at $p$ with the projection. We set
\begin{align*}
    \sigma:\operatorname{Bl}_{p}(\mathbb{P}^1\times \mathbb{C})\to \mathbb{P}^1\times \mathbb{C}, \qquad  F=\mathbb{P}^1\times \{0\}\subseteq \mathbb{P}^1\times \mathbb{C}
\end{align*}
$F$ is a smooth curve and $p\in F$ is a smooth point, so we have
\begin{align*}
    \sigma^*F=C+E
\end{align*} 
where the $C$ is the proper transform of $F$ and $E$ is the exceptional curve, so
\begin{align*}
    C^2=(\sigma^*F-E)^2=(\sigma^*F)^2-2\sigma^*F\cdot E+E^2=0-0-1=-1
\end{align*}
This corresponds to case (iii) in Section 4.1.1, by the same argument, there exists two points $q_C,q_E\in(C\cup E)\setminus (C\cap E)$ such that $\operatorname{Bl}_{p}(\mathbb{P}^1\times \mathbb{C})$ satisfies the first order visibility at points $q_C$ and $q_E$. Now we prove the Corollary \ref{BCCD}.
\begin{proof}
     We set $X=\operatorname{Bl}_{p}(\mathbb{P}^1\times \mathbb{C})$. From the above argument, the points $q_C$ and $q_E$ are the first order visible. By the proposition \ref{5.1} and the non-splitting property of $\operatorname{Bl}_{p}(\mathbb{P}^1\times \mathbb{C})$, we know the Ricci curvature admits a negative tangent direction on $q_C$ and $q_E$, because $X$ has positive scalar curvature, the another complex Ricci eigenvalue is positive at the points $q_C$ and $q_E$, so $X$ has mixed Ricci curvature.
\end{proof}

\subsection{Application to the Birational contraction type shrinker}
Let $X = \operatorname{Tot}_{\mathbb P^{n-1}} \mathcal O_{\mathbb P^{n-1}}(-k)$, for $1 \le k < n$, equipped with a complete Kähler Ricci shrinker structure. The zero section will be denoted by  $E \simeq \mathbb P^{n-1}$. $Y=\mathbb C^n / \mu_k$, there exists natural divisor contraction:
\begin{align*}
    \pi \colon \operatorname{Tot}_{\mathbb P^{n-1}} \mathcal O_{\mathbb P^{n-1}}(-k) \longrightarrow \mathbb C^n / \mu_k
\end{align*}
where $\mu_k$ (the group of $k$-th roots of unity) acts diagonally on $\mathbb C^n$. The exceptional locus of $\pi$ is precisely $E$, and $\pi(E) = o$, where $o \in Y$ is the vertex.
\[
R_X = \Gamma(X,\mathcal O_X) \simeq \bigoplus_{m \ge 0} H^0\bigl(\mathbb P^{n-1},\mathcal O_{\mathbb P^{n-1}}(mk)\bigr).
\]
The compact torus action exists fixed points on $\mathbb{P}^{n-1}$, we set a fixed point $q$. Since $\mathcal O_{\mathbb P^{n-1}}(k)$ is globally generated, there exists a section
\[
P \in H^0\bigl(\mathbb P^{n-1},\mathcal O_{\mathbb P^{n-1}}(k)\bigr)
\]
such that $P(q) \neq 0$. Let $\phi_P \in R_X$ be the corresponding regular function on the total space of $\mathcal O_{\mathbb P^{n-1}}(-k)$(by isomorphic). In fact, we have $d\phi_P(q) \neq 0$.

Indeed, choose a local affine chart $U = \{Z_1 \neq 0\} \subset \mathbb P^{n-1}$ with coordinates $w_i = {Z_i}/{Z_1}$ for $i = 2,\dots,n$. And choose a local fiber coordinate $s$ for $\mathcal O_{\mathbb P^{n-1}}(-k)$. In this trivialization, the section $P \in H^0(\mathbb P^{n-1},\mathcal O_{P^{n-1}}(k))$ is represented by a holomorphic function $P(1,w)$, and the corresponding regular function on $X$ is $\phi_P = s\, P(1,w)$.
we note that $\phi_{P}$ is globally defined. This is because $s_\alpha = g_{\alpha\beta}^{-1} \cdot s_\beta$ and $P_\alpha = g_{\alpha\beta} \cdot P_\beta$, so 
\begin{align*}
    s_\alpha \cdot P_\alpha = (g_{\alpha\beta}^{-1} s_\beta) \cdot \left(g_{\alpha\beta} P_\beta\right) = s_\beta \cdot P_\beta.
\end{align*}
The zero section is given by $E = \{s = 0\}$.
At a point $q = (s=0,w=w_0) \in E$, we have $d\phi_P(q) = P(1,w_0)\, ds$. Since $P(q) = P(1,w_0) \neq 0$, it follows that $d\phi_P(q) \neq 0$.  So $X$ satisfies the first-order visibility and $N_{E/X} = \mathcal{O}_{\mathbb{P}^{n-1}}(-k)$. By Proposition \ref{5.1} and Lemma \ref{ffff}, we obtain the following proposition.

\begin{proposition}
    Let $X = \operatorname{Tot}\bigl(\mathcal{O}_{\mathbb{P}^{n-1}}(-k)\bigr)$ with $1 \leq k < n$, equipped with Kähler Ricci shrinker structure, then $X$ cannot satisfy $\operatorname{Ric} > 0$ everywhere. More precisely, there exists some point $p\in X$ and some $v\in T_p^{1,0}X$ s.t $\operatorname{Ric}_p(v,\bar{v})<0$.
\end{proposition}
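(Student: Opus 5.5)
We apply Proposition~\ref{5.1} at the first-order visible point constructed just above, and use Lemma~\ref{ffff} to rule out the split case. Fix a point $q\in E\simeq\mathbb P^{n-1}$ that is a fixed point of the soliton torus $T$. Such a point exists because $E=\pi^{-1}(o)$ is $T$-invariant, by equivariance and since $o$ is the unique fixed point of $Y$, and Proposition~\ref{FFF} then gives $T$-fixed points on $E$. At a fixed point the Killing field $J\nabla f$ vanishes, hence so does $\nabla f$. The function $\phi_P$ built above is regular and satisfies $d\phi_P(q)=P(1,w_0)\,ds\neq0$, so $X$ is first-order visible at $q$.

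To apply Proposition~\ref{5.1} with a homogeneous function, we decompose $\phi_P=\sum_\beta(\phi_P)_\beta$ into $T$-weight components. Since $d\phi_P(q)\neq0$, some component $\phi_\beta$ satisfies $d\phi_\beta(q)\neq0$. We can even take $P$ to be a $T$-eigensection of $\mathcal O(k)$ with $P(q)\neq0$: the $T$-representation on $H^0(\mathcal O(k))$ is diagonalizable and evaluation at the fixed point $q$ is $T$-equivariant, so some weight vector is nonzero at $q$. Because $\phi_\beta(q)=0$ and $d\phi_\beta(q)\neq 0$, the function $\phi_\beta$ is nonconstant. Its weight $\alpha_\beta$ is positive, and Proposition~\ref{3.2} applies to it, with Lemma~\ref{3.1} supplying the integrability hypothesis. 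Proposition~\ref{5.1} then yields $v\in T^{1,0}_qX$ with $\operatorname{Ric}_q(v,\bar v)\le0$, and with strict inequality unless $X$ is holomorphically isometric to $\mathbb C\times N$. In particular $\operatorname{Ric}>0$ everywhere is already impossible.

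The remaining step, and the only real obstacle, is excluding the splitting $X\cong\mathbb C\times N$. Suppose such a biholomorphism exists. Then $E\simeq\mathbb P^{n-1}$ is a compact complex submanifold, and by Lemma~\ref{ffff} its normal bundle $N_{E/X}$ admits a trivial line subbundle. But $N_{E/X}\cong\mathcal O_{\mathbb P^{n-1}}(-k)$ has rank one, so this subbundle must be all of $N_{E/X}$, forcing $\mathcal O(-k)\cong\mathcal O$. This contradicts $k\ge1$, since for instance the degree of $N_{E/X}$ on a line is $-k\neq0$. Hence $X$ does not split, and the strict inequality $\operatorname{Ric}_q(v,\bar v)<0$ holds at $p=q$.

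The points needing care are that $q$ genuinely lies in the zero set of $\nabla f$ and that the weight-component reduction preserves $d\phi(q)\neq0$. Both follow from the equivariance and diagonalizability arguments above.
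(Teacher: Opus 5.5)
Your proposal is correct and follows the paper's own argument. You use first-order visibility via $\phi_P$ at a $T$-fixed point $q\in E$, then Proposition~\ref{5.1}, and then Lemma~\ref{ffff} applied to $N_{E/X}\cong\mathcal O_{\mathbb P^{n-1}}(-k)$ to exclude the product case. Your reduction to a single homogeneous weight component, and the rank-one argument ruling out a trivial line subbundle, make explicit steps the paper leaves implicit. The decomposition alone suffices, so the aside about choosing $P$ as a $T$-eigensection is unnecessary; it tacitly assumes $T$ commutes with fiber scaling.
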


Chi Li constructed another analogous example \cite{li2010rotationally}. Let $X = \operatorname{Tot}\big(\mathcal{O}_{\mathbb{P}^{n-1}}(-1)^{\oplus k}\big)$ with $k<n$, equipped with a complete Kähler Ricci shrinker metric. Let $E \cong \mathbb{P}^{n-1}$ denote its zero section, and let $Y= C\bigl(\mathbb{P}^{n-1} \times \mathbb{P}^{k-1}\bigr)$ stand for the Segre cone. There exists a small contraction
\begin{align*}
    \pi:\operatorname{Tot}\big(\mathcal{O}_{\mathbb{P}^{n-1}}(-1)^{\oplus k}\big)\longrightarrow C\bigl(\mathbb{P}^{n-1} \times \mathbb{P}^{k-1}\bigr).
\end{align*}
The morphism $\pi$ contracts the zero section $E$ to the vertex of the Segre cone.
\begin{align*}
    R_X = \Gamma(X,\mathcal O_X) \cong\bigoplus_{d\ge 0} H^0\left(\mathbb{P}^{n-1},\mathcal{O}(d)\right)\otimes\operatorname{Sym}^d\left(\mathbb{C}^k\right)^*.
\end{align*}
The zero section $\mathbb{P}^{n-1}$ ensures the existence of a fixed point $q$ for the torus action, and $\mathcal{O}_{\mathbb{P}^{m-1}}(d) \otimes \operatorname{Sym}^d\bigl(\mathbb{C}^k\bigr)^*$ is globally generated, which implies there exist global regular functions on $X$. More precisely, we define
\begin{align*}
    \phi_{s,\ell}(b, v) = s_b\bigl(\ell(v)\bigr),\qquad b\in X, \quad v\in \big(\mathcal{O}_{\mathbb{P}^{n-1}}(-1)^{\oplus k}\big)_b
\end{align*}
where $s \in H^0\left(\mathbb{P}^{n-1}, \mathcal{O}(1)\right)$, $\ell \in \bigl(\mathbb{C}^{k}\bigr)^{*}$, and $\ell(v) = \sum_{a=1}^k \ell_a v_a \in \mathcal{O}(-1)_b$.
By the same reasoning as for the FIK shrinker, $\phi_{s,\ell}$ is a globally regular function satisfying $d\phi_{s,\ell}(q,0)\neq0$. So $X$ satisfies the first-order visibility and $N_{E/X} = \mathcal{O}(-1)^{\oplus k}$. By Proposition \ref{5.1} and Lemma \ref{ffff}, we obtain the following proposition.
 \begin{proposition}
    Let $X = \operatorname{Tot}\big(\mathcal{O}_{\mathbb{P}^{n-1}}(-1)^{\oplus k}\big)$ with $k<n$, equipped with Kähler Ricci shrinker structure, then $X$ cannot satisfy $\operatorname{Ric} > 0$ everywhere. More precisely, there exists some point $p\in X$ and some $v\in T_p^{1,0}X$ s.t $\operatorname{Ric}_p(v,\bar{v})<0$.
\end{proposition}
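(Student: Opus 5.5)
The plan is to mimic the argument given just above for \(\operatorname{Tot}\mathcal O_{\mathbb P^{n-1}}(-k)\): establish first-order visibility at a torus-fixed point \(q\) on the zero section \(E\), then apply Proposition~\ref{5.1}, and finally rule out the splitting alternative with Lemma~\ref{ffff}.

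\textbf{Step 1: a fixed point.} The soliton torus \(T\) preserves the exceptional locus of the small contraction \(\pi\), which is \(E\cong\mathbb P^{n-1}\), because \(\pi\) is \(T\)-equivariant and the vertex is the unique fixed point of \(Y\). By Proposition~\ref{FFF} the induced action on \(E\) has fixed points, so pick one, \(q\), and note \(\nabla f(q)=0\).

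\textbf{Step 2: a function with \(d\phi(q)\neq0\).} Work in a local chart \(U=\{Z_1\neq0\}\) around \(q\), with base coordinates \(w\) and fiber coordinates \(v_1,\dots,v_k\) trivializing \(\mathcal O(-1)^{\oplus k}\). Then
\[
\phi_{s,\ell}=s(1,w)\sum_a\ell_a v_a ,
\]
which is globally well defined by the same cocycle cancellation as in the FIK-type case. Choose \(s\in H^0(\mathcal O(1))\) with \(s(q)\neq0\) and any \(\ell\neq0\). At \(q=(w_0,0)\) this gives
\[
d\phi_{s,\ell}(q)=s(1,w_0)\sum_a\ell_a\,dv_a\neq0 .
\]
The function \(\phi_{s,\ell}\) is regular, since it lies in the degree-one piece of \(R_X\). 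Decompose it into \(T\)-homogeneous components and take one whose differential at \(q\) is still nonzero; one must exist because \(d\phi(q)\) is the sum of the differentials of the components. This gives first-order visibility.

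\textbf{Step 3: ruling out the splitting.} Proposition~\ref{5.1} gives \(v\) with \(\operatorname{Ric}_q(v,\bar v)\le0\), and the inequality is strict unless \(X\cong\mathbb C\times N\) holomorphically and isometrically. Suppose such a splitting held. Lemma~\ref{ffff} would then make \(N_{E/X}=\mathcal O(-1)^{\oplus k}\) contain a trivial line subbundle, i.e.\ a nonvanishing section, so \(H^0(\mathbb P^{n-1},\mathcal O(-1)^{\oplus k})\neq0\). But this group is zero, a contradiction. Hence \(\operatorname{Ric}_q(v,\bar v)<0\), and we may take \(p=q\).

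The main point to check carefully is Step 2: that \(\phi_{s,\ell}\) is genuinely in \(R_X\), meaning the Sun--Zhang algebraic structure. This holds because \(R_X\) is identified with the section ring displayed above, and the degree-one piece \(H^0(\mathcal O(1))\otimes(\mathbb C^k)^*\) consists exactly of these functions.
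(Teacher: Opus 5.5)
Your proposal is correct and follows the paper's argument essentially step for step: you take a $T$-fixed point $q$ on the zero section, use the degree-one function $\phi_{s,\ell}$ with $d\phi_{s,\ell}(q)\neq0$, and then apply Proposition~\ref{5.1} together with Lemma~\ref{ffff} to $N_{E/X}=\mathcal O(-1)^{\oplus k}$. You also make explicit two points the paper leaves implicit: passing to a homogeneous component whose differential at $q$ is still nonzero, and the vanishing $H^0(\mathbb P^{n-1},\mathcal O(-1)^{\oplus k})=0$, which rules out a trivial line subbundle.
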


Our arguments also hold for the Futaki–Wang soliton \cite{futaki2011constructing}. Let $X = \operatorname{Tot}(L^{-k}) \to M$, $K_M = L^{-p}$, $0 < k < p$, where $M$ is a toric Fano Kähler–Einstein manifold and $L$ is a toric ample line bundle. There exists a natural divisorial contraction:
\begin{align*}
    \pi:\operatorname{Tot}(L^{-k}) \longrightarrow Y=\operatorname{Spec}\left(\bigoplus_{d\ge 0} H^0\bigl(M, L^{kd}\bigr)\right),
\end{align*}
where $\pi$ contracts the zero section $M$ to the vertex of the affine cone $Y$.
\begin{align*}
    R_X = \Gamma(X,\mathcal O_X) \cong \bigoplus_{d\ge 0} H^0\bigl(M, L^{kd}\bigr)
\end{align*}
By the Futaki–Wang construction, $J\nabla f = cJr\partial_r$, which generates an $S^1$-action fixing the zero section. Moreover, by \cite[Theorem 6.1.15]{cox2011graduate}, any positive toric line bundle $L$ is very ample on complete toric variety and hence globally generated. We may choose a global section $s\in H^0\bigl(M, L^k\bigr)$ such that $s(q)\neq 0$, and define
\begin{align*}
    \phi_s(x,v)=\left\langle s(x),v\right\rangle, \qquad x\in M, \quad v\in L^{-k}_x.
\end{align*}
In local coordinates, write $v=we^*$ and $s=a(z)e$, where $z$ denotes base coordinates, $w$ the fiber coordinate, while $e$ and $e^*$ stand for a local frame of $L^k$ and its dual frame, respectively. Combined with the condition $s(q)\neq 0$, this implies $d\phi_s(q,0)=a(q)dw\neq 0$. So $X$ satisfies the first-order visibility and $N_{M/X} = L^{-k}$. By Proposition \ref{5.1} and Lemma \ref{ffff}, we obtain the following proposition.
\begin{proposition}
    Let $X = \operatorname{Tot}(L^{-k})$ with $0<k<p$, equipped with Futaki-Wang  Kähler Ricci shrinker structure, then $X$ cannot satisfy $\operatorname{Ric} > 0$ everywhere. More precisely, there exists some point $p\in X$ and some $v\in T_p^{1,0}X$ s.t $\operatorname{Ric}_p(v,\bar{v})<0$.
\end{proposition}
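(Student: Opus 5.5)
The plan is to follow the same pattern as the two preceding propositions: establish first-order visibility at a point $q$ of the zero section $M$, then rule out the split case $\mathbb C\times N$ using Lemma~\ref{ffff}. Proposition~\ref{5.1} will then give a strict negative Ricci direction.

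\emph{Step 1: a zero of the soliton field on $M$.} Since $J\nabla f=cJr\partial_r$ generates an $S^1$-action fixing the zero section pointwise, every point of $M$ is a zero of $\nabla f$. Any $q\in M$ is therefore a critical point of $f$, and we fix $q$ with $s(q)\neq0$, where $s\in H^0(M,L^k)$ is as above.

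\emph{Step 2: first-order visibility at $q$.} The function $\phi_s(x,v)=\langle s(x),v\rangle$ is fiberwise linear. Under the identification $R_X\cong\bigoplus_{d\ge0}H^0(M,L^{kd})$ it lies in the degree-one piece, so it is a global regular function. It is homogeneous for the soliton torus, since the $S^1$ rotating the fibers scales it by a single character. The local computation $d\phi_s(q,0)=a(q)\,dw\neq0$ shows that $X$ is first-order visible at $q$.

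\emph{Step 3: ruling out the splitting.} Suppose $X\cong\mathbb C\times N$ holomorphically. By Lemma~\ref{ffff}, the compact submanifold $M$ would have normal bundle containing a trivial line subbundle. Here the normal bundle is the line bundle $N_{M/X}=L^{-k}$, so this would force $L^{-k}\cong\mathcal O_M$. That is impossible, because $L$ is ample and $k>0$: the degree of $L^{-k}$ on any curve is negative. Proposition~\ref{5.1} then gives $v\in T^{1,0}_qX$ with $\operatorname{Ric}_q(v,\bar v)<0$. In particular $\operatorname{Ric}>0$ fails everywhere-positivity.

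\emph{Expected obstacle.} The delicate step is confirming that $\phi_s$ genuinely lies in $R_X$, i.e.\ in the Sun--Zhang algebraic structure, and is homogeneous. If the torus $T$ generated by $J\nabla f$ is only the fiber $S^1$, this is immediate from the graded description of $R_X$. Otherwise I would argue as in the toric case: descend $\phi_s$ to a holomorphic function on $Y$, note that it is $T$-finite, and conclude via \cite[\S2.4]{ZhangBirationalKRFNotes}. The other steps are routine given the earlier results.
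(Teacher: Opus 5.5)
Your proposal is correct and follows the paper's argument: you establish first-order visibility at a point $q$ of the zero section via the fiberwise-linear function $\phi_s$, then apply Proposition~\ref{5.1}, using Lemma~\ref{ffff} with $N_{M/X}=L^{-k}$ to exclude the split case. Two of your steps only spell out details the paper leaves implicit: that every point of $M$ is a zero of $\nabla f$, so any nonzero $s$ suffices, and that a trivial subbundle of the rank-one bundle $L^{-k}$ would make it trivial, contradicting ampleness of $L$.
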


\begin{remark}
    In \cite{cifarelli2024explicit}, Cifarelli constructs a more general family of examples on the total space of a direct sum of line bundles over a Fano Kähler–Einstein manifold $B$. By our preceding arguments, the first-order visibility property depends on the global generation of the line bundle. Consequently, if we assume that $H^0(B,L^{-m_j})\neq 0$ holds for at least one index $j\in\{1,2\}$(Fixed points exist because the torus action restricts solely to the fibers.), the foregoing arguments also hold in Cifarelli's examples.
\end{remark}

\section{Stein Rigidity and Smooth Fano Cone}
In this section, we will prove Theorem \ref{SZCX}. Sun and Zhang conjectured that every Kähler Ricci shrinker on a Fano cone is a Ricci-flat Kähler cone metric; see \cite[Conjecture~6.3]{sun2024k}. Within the polarized Fano fibration framework, this is the case in which the canonical fibration is the identity morphism. Motivated by this conjecture, we consider complete smooth Kähler Ricci shrinkers for which the canonical polarized Fano fibration
\begin{align*}
   \pi:X \longrightarrow Y=\operatorname{Spec}R_X 
\end{align*}
is an isomorphism. In this nonsingular setting, we obtain a stronger conclusion: the shrinker is necessarily the Gaussian shrinker on $\mathbb C^n$. Consequently, the result verifies the Conjecture for Fano cones whose underlying affine variety is nonsingular, including at the cone vertex.
\begin{theorem}[Affine Rigidity]\label{55}
Let $(X^n,g,J,f)$ be a complete Kähler Ricci shrinker, and let
\[
\pi:X\longrightarrow Y=\operatorname{Spec}R_X
\]
be its canonical polarized Fano fibration. If $\pi$ is an
isomorphism, then $(X,g,J,f)$ is holomorphically isometric to the Gaussian shrinker on $\mathbb C^n$.
\end{theorem}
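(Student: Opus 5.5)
Since $\pi$ is an isomorphism, $X\cong Y$ is a smooth affine variety carrying a $T$-action with a unique fixed point $o$. The point $q:=\pi^{-1}(o)$ is then $T$-fixed, so $\nabla f(q)=0$. The plan has four steps.

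\textbf{Step 1: homogeneous coordinates at $q$.} We produce $n$ homogeneous regular functions whose differentials span $T^{*1,0}_qX$, exactly as in case (i) of the $\dim_{\mathbb C}Y=2$ argument. The maximal ideal $\mathfrak m_o\subset R_Y$ is $T$-invariant, hence a graded ideal, so $\mathfrak m_o/\mathfrak m_o^2\cong T^{*1,0}_qX$ inherits the weight grading. Since $X$ is smooth of dimension $n$, this space is $n$-dimensional. Choosing homogeneous representatives of a graded basis gives $\phi_1,\dots,\phi_n\in R_X$, each $\phi_i\in R_{X,\beta_i}$, with $d\phi_1(q),\dots,d\phi_n(q)$ linearly independent. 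In particular each $\phi_i$ is nonconstant, so $\alpha_{\beta_i}>0$.

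\textbf{Step 2: all weights equal $1$.} Apply Proposition \ref{3.4} to $u_i=\operatorname{Re}\phi_i$ (or $\operatorname{Im}\phi_i$, whichever has nonzero differential at $q$). It satisfies $\nabla f(u_i)=\alpha_{\beta_i}u_i$ because $\alpha_{\beta_i}$ is real. This makes $1-\alpha_{\beta_i}$ an eigenvalue of $\operatorname{Ric}_q$.

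Better, use the complex version directly. The identity $(dZ_q)^*d\phi_i(q)=\alpha_{\beta_i}d\phi_i(q)$ with $dZ_q=I-\operatorname{Ric}^\sharp_q$ says that the $n$ independent covectors $d\phi_i(q)$ are eigenvectors of $\operatorname{Ric}_q$ on $T^{*1,0}_q$, with eigenvalues $1-\alpha_{\beta_i}$. So the Ricci eigenvalues at $q$ are exactly $1-\alpha_{\beta_i}$, $i=1,\dots,n$. Theorem \ref{3.3} gives $\alpha_{\beta_i}\ge 1$, so every Ricci eigenvalue at $q$ is $\le 0$.

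The scalar curvature is $R\ge 0$ by Lemma \ref{iden}, and $R(q)=2\sum_i(1-\alpha_{\beta_i})$ up to normalization. Hence $\sum_i(1-\alpha_{\beta_i})\ge 0$, and combined with each term being $\le 0$ this forces $\alpha_{\beta_i}=1$ for all $i$.

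\textbf{Step 3: splitting.} Now $\phi_1,\dots,\phi_n\in E_1$, and their differentials are independent at $q$, so $\dim_{\mathbb C}E_1\ge n$. Theorem \ref{3.3}(3) then gives a holomorphic isometry $X\cong\mathbb C^n\times N$ with $\dim N=0$. Hence $X\cong\mathbb C^n$ with the flat metric, and $f=|z|^2/2$ after translation. This is the Gaussian shrinker.

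\textbf{Main obstacle.} The delicate point is Step 2, where the zero-weight bound must be upgraded to equality.

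One issue is the use of $R\ge 0$ at the single point $q$. We need $R(q)$ to equal the trace of the Ricci endomorphism, computed as the sum over the full real eigenbasis. Each complex eigencovector $d\phi_i(q)$ contributes a real $2$-plane (spanned by $d\operatorname{Re}\phi_i$ and $d\operatorname{Im}\phi_i$) on which Ricci acts by the same eigenvalue. So the real eigenvalues are the $1-\alpha_{\beta_i}$, each with multiplicity two, and the trace identity holds.

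A second issue is whether $R(q)=0$ is consistent. If equality holds, Chen's strong maximum principle gives $R\equiv0$ unless $X$ is flat, which is again consistent with the Gaussian case. So no contradiction arises and the argument closes.

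A cleaner alternative avoids $R$ entirely. The identity $\int|\nabla^2u_i|^2=(\alpha-1)\int|\nabla u_i|^2$ together with Bochner at $q$ is not needed. Instead use $\Delta f(q)=n_{\mathbb R}-R(q)$ and $\Delta f(q)=\operatorname{tr}dZ_q=2\sum_i\alpha_{\beta_i}$. This gives $R(q)=2\sum_i(1-\alpha_{\beta_i})\le 0$, and $R\ge0$ then forces $\alpha_{\beta_i}=1$. I would present this trace computation as the key step.
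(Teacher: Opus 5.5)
Your proposal is correct and follows essentially the same route as the paper. Both arguments take homogeneous lifts $\phi_1,\dots,\phi_n$ of a graded basis of $\mathfrak m_q/\mathfrak m_q^2$ and apply the fixed-point linearization of Proposition~\ref{3.4} to read off the Ricci eigenvalues $1-\alpha_{\beta_i}\le 0$ (using Theorem~\ref{3.3}). Then $R(q)\ge 0$ forces every weight to equal $1$, and the Gaussian splitting of Theorem~\ref{3.3} applies. Your $\Delta f(q)=2n-R(q)$ trace formulation is the same computation, and the strong-maximum-principle aside is not needed; the paper mentions it only as an alternative.
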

\begin{proof}
    Let $X$ be a Kähler Ricci shrinker. There exists a canonical polarized Fano fibration
\[
\pi \colon X \to Y = \operatorname{Spec} R_X.
\]
By our hypothesis, $\pi$ is an isomorphism, so $X = Y$ is a polarized affine cone. In particular, $X$ admits a torus-fixed point $q \in X$.

Denote by $\mathfrak{m}_q \subseteq R_X$ the maximal ideal of $q$, i.e., $\mathfrak{m}_q = \bigl\{\phi \in R_X \,\big|\, \phi(q) = 0\bigr\}$.
Since $X$ is smooth affine, we have the canonical isomorphism $\mathfrak{m}_q / \mathfrak{m}_q^2 \cong T_q^{*1,0}X$,
which implies $\dim_{\mathbb{C}} \mathfrak{m}_q / \mathfrak{m}_q^2 = n$. The torus action induces a weight decomposition on $\mathfrak{m}_q$, $\mathfrak{m}_q^2$, and their quotient $\mathfrak{m}_q/\mathfrak{m}_q^2$.
By surjectivity of the quotient map and homogeneity of the ideals, we may pick homogeneous functions $\phi_1,\dots,\phi_n \in \mathfrak{m}_q$ such that $\{d\phi_1(q),\dots,d\phi_n(q)\}$ forms a basis of $T_q^{*1,0}X$.
By Theorem \ref{3.3}, these functions satisfy
\[
\nabla f(\phi_i) = \lambda_i \phi_i, \quad \lambda_i \ge 1.
\]

On the other hand, at the torus-fixed point $q$, by the arguments in Proposition \ref{3.4}, we have 
\[
\bigl(\operatorname{Ric}_q^\sharp\bigr)^*d\phi_i(q)
=(1-\lambda_i)d\phi_i(q).
\]
Since $\{d\phi_1(q),\dots,d\phi_n(q)\}$ spans the full cotangent space $T_q^{*1,0}X$, the eigenvalues of $\operatorname{Ric}_q^\#$ are exactly
\[
\rho_i = 1 - \lambda_i \le 0.
\]
The scalar curvature at $q$ is then
\[
R(q) = 2\sum_{i=1}^n \rho_i \le 0.
\]
But scalar curvature $R \ge 0$ everywhere on shrinker $X$, so we must have $R(q) = 0$. This forces $\lambda_i = 1$ for all $i = 1,\dots,n$.
By our Theorem \ref{3.3}, we conclude $X \cong \mathbb{C}^n$, equipped with the standard Gaussian shrinker metric. Alternatively, the strong maximum principle can be invoked.
\end{proof}

\begin{theorem}[Stein Rigidity]
    Let $(X^n,g,J,f)$ be a complete Kähler Ricci shrinker, If the underlying complex manifold X is Stein, then $(X,g,J,f)$ is holomorphically isometric to the Gaussian shrinker on $\mathbb C^n$.
\end{theorem}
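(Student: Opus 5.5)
The plan is to reduce to the Affine Rigidity Theorem~\ref{55}, or more directly to rerun its argument at a fixed point of the soliton torus. The first step is to produce a zero of $\nabla f$. Since $f$ is proper and bounded below (by \cite{cao2010complete} in the noncompact case), it attains its minimum. A minimum point $q$ satisfies $\nabla f(q)=0$, so $q$ is fixed by the torus $T$ generated by $J\nabla f$.

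The second step is to show that the fibration $\pi:X\to Y=\operatorname{Spec}R_X$ has no positive-dimensional fibers.
\begin{enumerate}
\item Every fiber of $\pi$ is a compact analytic subvariety of $X$, because $\pi$ is projective.
\item A Stein manifold contains no compact analytic subvariety of positive dimension.
\item Hence every fiber is finite. Since $\pi_*\mathcal O_X=\mathcal O_Y$, the fibers are connected, so each fiber is a single point.
\end{enumerate}
It follows that $\pi$ is a proper, injective, birational morphism onto the normal variety $Y$. By Zariski's Main Theorem (as used in case (i) of the surface analysis in Section~4.1.2), $\pi$ is an isomorphism. Theorem~\ref{55} then applies directly.

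If one prefers to avoid Zariski's Main Theorem, the argument can be run at $q$ directly. Because $\pi$ is a local isomorphism near $q$ and $R_X$ is graded, the argument of case (i) in Section~4.1.2 applies. It shows that homogeneous elements of $\mathfrak m_q$ surject onto $\mathfrak m_q/\mathfrak m_q^2\cong T^{*1,0}_qX$. We can therefore pick homogeneous $\phi_1,\dots,\phi_n\in R_X$ with weights $\lambda_i$ whose differentials at $q$ form a basis. The rest follows the proof of Theorem~\ref{55}:
\begin{enumerate}
\item Theorem~\ref{3.3} gives $\lambda_i\ge1$.
\item Proposition~\ref{3.4} identifies the Ricci eigenvalues at $q$ as $1-\lambda_i\le0$.
\item Lemma~\ref{iden} gives $R(q)\ge0$, which forces every $\lambda_i=1$.
\item Then $\dim E_1\ge n$, and part (3) of Theorem~\ref{3.3} splits off $\mathbb C^n$, leaving a point factor. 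The result is the Gaussian shrinker.
\end{enumerate}

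The main obstacle is the identification of the algebraic structure with the holomorphic one. The Stein property is an analytic statement, while the fibers of $\pi$ are algebraic. Compact algebraic fibers are still compact analytic subsets of $X^{\mathrm{an}}$, so the exclusion of positive-dimensional fibers goes through. The real care is needed in concluding that $\pi$ is an isomorphism, or at least a local isomorphism at $q$, with $Y$ smooth there. I would argue via properness and quasi-finiteness: a proper quasi-finite birational morphism to a normal variety is an isomorphism. As a fallback, the analytic Stein factorization \cite{barth2003compact} also gives $\pi^{an}$ biholomorphic near $q$. The compact case needs no separate treatment, because a compact Stein manifold is a point.
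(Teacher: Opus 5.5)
Your proposal is correct and follows the paper's route. You rule out positive-dimensional fibers of the proper map $\pi$ using the Stein property, conclude that $\pi$ is an isomorphism, and invoke the Affine Rigidity Theorem~\ref{55}. The only minor difference is in the isomorphism step: the paper notes that $\pi$ is proper and quasi-finite, hence finite and affine, so $X\cong\underline{\operatorname{Spec}}_Y(\pi_*\mathcal O_X)=Y$ follows directly from $\pi_*\mathcal O_X=\mathcal O_Y$, with no birationality or Zariski's Main Theorem needed; also, your choice of a minimum of $f$ and the alternative argument at $q$ are redundant, since Theorem~\ref{55} already supplies the fixed point.
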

\begin{proof}
    Since $X$ is a Stein manifold, we consider the fiber $X_{y}=\pi^{-1}(y)$, due to $\pi$ is a projective morphism, naturally proper morphism, so $X_{y}$ is compact, but Stein manifold don't admit any positive dimensional compact analytic subset, which forces $\operatorname{dim}_{\mathbb{C}}X_{y}=0$, so $\pi$ is an quasi-finite morphism, combining the proper property with $\pi$, we know $\pi$ is a finite morphism.

    Since $\pi$ is finite, it is a naturally affine morphism. Hence, by \cite[Lemma 29.11.3, Tag 01S5]{stacks-project}, combining with $\pi_*\mathcal{O}_X=\mathcal{O}_Y$, we have 
    \begin{align*}
        X \simeq \underline{\operatorname{Spec}}_Y(\pi_*\mathcal O_X) \simeq \operatorname{Spec}_Y(\mathcal{O}_Y)\simeq Y,
    \end{align*}
    so $\pi:X\to Y$ is isomorphic in the sense of algebraic variety. By Theorem \ref{55}, we know $(X,g,J,f)$ is holomorphically isometric to the Gaussian shrinker on $\mathbb C^n$.
\end{proof}

\bibliographystyle{alpha}
\bibliography{ref.bib}

\section*{Author Information}
\noindent\textbf{Tongxin Xu}$^*$\\
School of Mathematical Sciences, Capital Normal University\\
Email: 2250501032@cnu.edu.cn
\vspace{1em}

\noindent\textbf{Zhenlei Zhang}$^\dagger$\\
School of Mathematical Sciences, Capital Normal University\\
Email: zhleigo@aliyun.com\\
\end{document}